\newtheorem{theorem}{Theorem}[section]
\newtheorem{obs}[theorem]{Observation}
\newtheorem{corollary}[theorem]{Corollary}
\newtheorem{cor}[theorem]{Corollary}
\newtheorem{lemma}[theorem]{Lemma}
\newtheorem{proposition}[theorem]{Proposition}
\newtheorem{prop}[theorem]{Proposition}
\newtheorem{example}[theorem]{Example}
\theoremstyle{definition}
\newtheorem{definition}[theorem]{Definition}
\newtheorem{defn}[theorem]{Definition}
\newtheorem{quest}[theorem]{Question}
\newcounter{casenum}
\newenvironment{caseof}{\setcounter{casenum}{1}}{\vskip.5\baselineskip}
\newcommand{\case}[2]{\vskip.5\baselineskip\par\noindent {\bfseries Case \arabic{casenum}:} \textit{#1}\\#2\addtocounter{casenum}{1}}
\newcommand{\defi}[1]{\textsf{#1}}
\newcommand{\bbA}{\mathbb{A}}
\newcommand{\bbB}{\mathbb{B}}
\newcommand{\bbC}{\mathbb{C}}
\newcommand{\bbI}{\mathbb{I}}
\newcommand{\bbN}{\mathbb{N}}
\newcommand{\bbQ}{\mathbb{Q}}
\newcommand{\bbR}{\mathbb{R}}
\newcommand{\bbT}{\mathbb{T}}
\newcommand{\bbZ}{\mathbb{Z}}
\newcommand{\frakA}{\mathfrak{A}}
\newcommand{\frakS}{\mathfrak{S}}
\newcommand{\prm}{^\prime}
\newcommand{\parent}[1]{\left( #1 \right)}
\newcommand{\inv}{^{-1}}
\DeclareMathOperator{\into}{\hookrightarrow}
\newcommand{\Sum}{\frakS}
\newcommand{\Add}{\frakA}
\newcommand{\Domain}{{\rm D}}
\newcommand{\Codomain}{{\rm E}}
\newcommand{\Sums}[2]{\textbf{\upshape{S}}\parent{#1,#2}}
\newcommand{\WSums}[2]{\textbf{\upshape{wMS}}\parent{#1,#2}}
\newcommand{\MSums}[2]{\textbf{\upshape{MS}}\parent{#1,#2}}
\newcommand{\R}{{\rm R}}
\newcommand{\Reg}[1]{{\rm Reg}\parent{#1}}
\newcommand{\Zeroes}[2]{{\rm Z}\parent{#1; #2}}
\newcommand{\T}{\mathcal{T}}
\newcommand{\M}{\mathcal{M}}
\newcommand{\Q}{\mathcal{Q}}
\newcommand{\U}{\mathcal{U}}
\newcommand{\Alg}[1]{\bbA\parent{#1}} 
\newcommand{\AbsAlg}[1]{\bbB\parent{#1}} 
\newcommand{\Trans}[1]{\bbT\parent{#1}} 
\newcommand{\Inf}[1]{\bbI\parent{#1}} 
\newcommand{\scalpoly}[1]{s_{#1}}
\newcommand{\scalroot}[1]{\rho_{#1}}
\newcommand{\reflected}[1]{{#1}^{\text{R}}}
\begin{document}
\title{Multiplicative summations into algebraically closed fields}
\author{Robert J. MacG. Dawson}\email{rdawson@cs.smu.ca}
\author{Grant Molnar}\email{Grant.S.Molnar.GR@dartmouth.edu}\thanks{The second author received support from the Gridley Fund for Graduate Mathematics.}

\begin{abstract}
In this paper, extending our earlier program, we derive maximal canonical extensions for multiplicative summations into algebraically closed fields. We show that there is a well-defined analogue to minimal polynomials for a series algebraic over a ring of series, the ``scalar polynomial''. When that ring is the domain of a summation $\Sum$, we derive the related concepts of the $\Sum$-minimal polynomial for a series, which is mapped by $\Sum$ to a scalar polynomial. When the scalar polynomial for a series has the form $(t-a)^n$, $a$ is the unique value to which the series can be mapped by an extension of the original summation. 
\end{abstract}

\maketitle

\section{Introduction}\label{Section: Introduction}

This paper is a sequel to \cite{DM2}, and we follow the notations and conventions established therein. Let $\R$ be a commutative unital ring with $0 \neq 1$, and let $\Codomain$ be a commutative unital $\R$-algebra with $0 \neq 1$. Let $\Domain$ be a set and $\Sum : \Domain \to \Codomain$ a map. 

\begin{defn}
	The tuple $(\R, \Codomain, \Domain, \Sum)$ is a \defi{summation on $\R$ to $\Codomain$} if it satisfies the following axioms:
	\begin{enumerate}[label=(\Roman*)]
		\item We have $\R[\sigma] \subseteq \Domain \subseteq \R[[\sigma]]$;\label{Condition: AreSeries}
		\item The set $\Domain$ is an $\R$-module, and the map $\Sum$ is an $\R$-module homomorphism; \label{Condition: IsModuleMorphism}
		\item We have $\Sum(1) = 1$; \label{Condition: ExtendsAdd}
		\item We have $(1 - \sigma) \Domain \subseteq \Domain$, and the morphism $\Sum$ factors through $\Domain / (1 - \sigma) \Domain$. \label{Condition: Factors by (1 - sigma)}
	\end{enumerate}
\end{defn}

\noindent Axiom \ref{Condition: Factors by (1 - sigma)} gives us the following commutative diagram of $\R$-modules:
	\[
		\xymatrix@R=50pt{
		\Domain \ar[rr]^{\Sum} \ar@{->>}[dr]& & \Codomain \\
		& \Domain /(1 - \sigma) \ar@{.>}[ur]_{\widetilde{\Sum}} &
		}
	\]

We write $(\Domain, \Sum)$ or simply $\Sum$ for the summation $(\R, \Codomain, \Domain, \Sum)$ when no confusion results. We write $\Sums \R \Codomain$ for the set of all summations on $\R$ to $\Codomain$. If $X$ is a power series in the variable $\sigma$, we write $(X)_n$ for the $n$th coefficient of $X$, or $X_n$ if no confusion arises. We typically use capital letters for series over $\R$, and lower-case letters for scalars in $\Codomain$.

Our definition of summations leaves open the possibility that the underlying ring $\R$ and the codomain $\Codomain$ are not the same. Classically, this freedom allows us to sum a rational series into $\bbR$; telescoping also allows us to sum an integral series into $\bbQ$. But we do not even demand that $\R$ is a subring of $\Codomain$. If it is not, then the map $x \mapsto \Sum(x+0+0+\cdots)$ is not injective. We call a summation \defi{proper} if $x \mapsto \Sum(x+0+0+\cdots)$ is injective. The following example illustrates this distinction.

\begin{example}\label{Example: Defining Add}
	We define $\Add : \R[\sigma] \to \R$ by $\Add : X(\sigma) \mapsto X(1)$. This is the unique minimal summation over $\R$ with values in $\R$; it is proper.\\  
        Given a ring homomorphism $f:\R\rightarrow\Codomain$, the composition $f\circ\Add$ is a summation over $\R$ with values in $\Codomain$, which by slight overloading of notation we can call $\Add : \R[\sigma] \to \Codomain$. It is proper if and only if $f$ is injective.\\
        For instance, $\Add:\bbZ[\sigma]\rightarrow \bbZ/2\bbZ$ is a summation that maps $1+2+3$ to $0$, and is not proper.
\end{example}

Recall that the \defi{fulfillment} of a summation $\Sum \in \Sums \R \Codomain$ is its unique maximal canonical extension. We write $\Reg \Codomain$ for the nonzero divisors of $\Codomain$. 

\begin{defn}\label{Definition: Telescopic Extension}
	For any summation $(\Domain, \Sum) \in \Sums \R \Codomain$, the \defi{telescopic extension} $(\T \Domain, \T \Sum)$ of $(\Domain, \Sum)$ is defined as follows. For $X \in \R[[\sigma]]$, we say $X \in \T\Domain$ if there exists $A \in \Domain$, $F \in \R[\sigma]$, $f \in \Reg \Codomain$, and $x \in \Codomain$ such that $A = FX$, $\Sum(F) = f$, and $\Sum(A) = f x$. We define
	\begin{align*}
		\T \Sum &: \T \Domain \to \Codomain, \\
		\T \Sum &: X \mapsto x \ \text{if} \ X \ \text{is as above.}
	\end{align*}
\end{defn}

\begin{theorem}\label{Theorem: Telescopic Extension}
	The functor of summations 
	\[
		\T : \Sums \R \Codomain \to \Sums \R \Codomain
	\]
	is an idempotent extension map. Moreover, if $\R$ is an integral domain, then $\T \Sum$ is the fulfillment of $\Sum$.
\end{theorem}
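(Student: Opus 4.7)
The plan is to establish, in sequence: (i) well-definedness of $\T\Sum$, (ii) the summation axioms together with $\Sum \subseteq \T\Sum$ and the functoriality needed to make $\T$ an extension map, (iii) idempotence, and (iv) under the integral-domain hypothesis, maximality among canonical extensions.

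For well-definedness I would first record two consequences of axiom \ref{Condition: Factors by (1 - sigma)}, presumably already available from \cite{DM2}: that $\R[\sigma] \cdot \Domain \subseteq \Domain$, obtained by induction from $\sigma = 1 - (1-\sigma)$, and that $\Sum(FA) = F(1)\Sum(A) = \Sum(F)\Sum(A)$ for every $F \in \R[\sigma]$ and $A \in \Domain$, obtained because $F - F(1)$ is divisible by $(1-\sigma)$ in $\R[\sigma]$ so that $FA - F(1)A \in (1-\sigma)\Domain$. Given two witnessing tuples $(A_1, F_1, f_1, x_1)$ and $(A_2, F_2, f_2, x_2)$ for the same $X \in \T\Domain$, the element $F_2 A_1 = F_1 A_2$ lies in $\Domain$, and computing its image under $\Sum$ in two ways yields $f_1 f_2 x_1 = f_1 f_2 x_2$; since $f_1 f_2 \in \Reg\Codomain$, cancellation gives $x_1 = x_2$.

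The summation axioms are then mostly bookkeeping. Axioms \ref{Condition: AreSeries}, \ref{Condition: ExtendsAdd}, and the inclusion $\Sum \subseteq \T\Sum$ all follow by taking $A = X$, $F = f = 1$. For the $\R$-module structure in \ref{Condition: IsModuleMorphism}, I would combine witnesses $(A_i, F_i, f_i, x_i)$ for $X_i$ into the witness $(F_2 A_1 + F_1 A_2, F_1 F_2, f_1 f_2, x_1 + x_2)$ for $X_1 + X_2$, with scalar multiplication handled analogously. For \ref{Condition: Factors by (1 - sigma)}, the witness $((1-\sigma) A, F, f, 0)$ shows that $\T\Sum$ vanishes on $(1-\sigma)\T\Domain$. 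Naturality in the data $(\R, \Codomain, \Domain, \Sum)$ is built into the definition, making $\T$ a functor. Idempotence is then a direct composition of witnesses: if $(A, F, f, x)$ witnesses $X \in \T\T\Domain$ relative to $\T\Sum$, and $(B, G, g, fx)$ witnesses $A \in \T\Domain$ relative to $\Sum$, then $(B, GF, gf, x)$ witnesses $X$ at the level of $\Sum$ itself, since $\Sum(GF) = G(1)F(1) = gf \in \Reg\Codomain$.

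To identify $\T\Sum$ with the fulfillment when $\R$ is an integral domain, I would proceed in two sub-steps. First, I would verify that $\T\Sum$ is itself a canonical extension of $\Sum$; this should follow from the universal flavour of the definition, as the value $x$ is forced by the algebraic relation $A = FX$ combined with multiplicativity against polynomials, and so is respected by any canonical construction. Second, I would show that any canonical extension $\U$ of $\Sum$ is contained in $\T\Sum$ by translating the canonicity property of $\U$ into an explicit polynomial-multiplier witness for membership in $\T\Domain$. I expect this second sub-step to be the technical crux: it requires invoking the precise notion of canonical extension from \cite{DM2}, and the integral-domain hypothesis on $\R$ should enter precisely to ensure that the polynomial multipliers produced satisfy $\Sum(F) = F(1) \in \Reg\Codomain$, so that $\T$ genuinely captures every canonical value and the inclusion becomes an equality.
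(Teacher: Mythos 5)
A preliminary remark: the paper itself gives no proof of this theorem --- it is recalled from \cite{DM2} --- so there is no in-paper argument to compare yours against line by line; I can only assess the proposal on its own terms. The first three stages are sound and essentially complete in outline. The two preparatory facts, $\R[\sigma]\cdot\Domain\subseteq\Domain$ and $\Sum(FA)=F(1)\Sum(A)=\Sum(F)\Sum(A)$, do follow from axiom (IV) exactly as you describe, and they correctly drive the well-definedness argument (cancelling the regular element $f_1f_2$), the verification of axioms (I)--(IV) for $(\T\Domain,\T\Sum)$, the inclusion $\Sum\subseteq\T\Sum$, monotonicity in $\Sum$, and idempotence by composing witnesses. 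None of this uses the integral-domain hypothesis, which is consistent with the statement.

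The gap is in the final claim, that $\T\Sum$ is the fulfillment when $\R$ is an integral domain. Your first sub-step (canonicity of $\T\Sum$) is fine and in fact easy: if $\Sum''$ is any extension of $\Sum$ that sums $X\in\T\Domain$ to $y$, then $fy=\Sum''(FX)=\Sum''(A)=fx$ forces $y=x$, so $\T\Sum$ is compatible with every extension of $\Sum$. But your second sub-step --- that every canonical extension is contained in $\T\Sum$ --- is not an argument; it is a promissory note (``translating the canonicity property into an explicit polynomial-multiplier witness''). What is actually required is the contrapositive: for $X\notin\T\Domain$ one must exhibit two extensions of $\Sum$ that disagree on $X$ (for instance on $\Domain+\R[\sigma]X$ via $A+FX\mapsto\Sum(A)+F(1)x$ for distinct choices of $x$), and the entire difficulty lies in proving such maps are well defined, i.e.\ in analysing the relations $GX=B\in\Domain$ with $G\neq 0$ and $G(1)$ possibly zero or a zero divisor in $\Codomain$. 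That analysis is where the integral-domain hypothesis on $\R$ genuinely enters (e.g.\ so that $G\neq 0$ and $GX=B$ pin down $X$ inside the domain $\R[[\sigma]]$); your guess that it serves to ensure $\Sum(F)=F(1)\in\Reg{\Codomain}$ cannot be right, since the hypothesis is on $\R$ and says nothing about $\Codomain$ or the structure map $\R\to\Codomain$. As written, the proposal establishes that $\T$ is an idempotent extension map but does not establish the fulfillment claim.
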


The tuple $(\R, \Codomain, \Domain, \Sum)$ is a \defi{multiplicative summation on $\R$ to $\Codomain$} if it satisfies axioms \ref{Condition: AreSeries}, \ref{Condition: ExtendsAdd}, \ref{Condition: Factors by (1 - sigma)} from Section \ref{Section: Introduction}, and the following strengthened version of axiom \ref{Condition: IsModuleMorphism}:
\begin{enumerate}[label=(\Roman*$\prm$)]
	\setcounter{enumi}{1}
	\item The set $\Domain$ is an $\R$-algebra, and the map $\Sum$ is an $\R$-algebra homomorphism; \label{Condition: IsAlgebraMorphism}
\end{enumerate}

\noindent In this context, axiom \ref{Condition: Factors by (1 - sigma)} gives us the following commutative diagram of $\R$-algebras:
	\[
		\xymatrix@R=50pt{
		\Domain \ar[rr]^{\Sum} \ar@{->>}[dr]& & \Codomain \\
		& \Domain /(1 - \sigma) \ar@{.>}[ur]_{\widetilde{\Sum}} &
		}
	\]

\begin{definition}
	If a summation has an extension which is multiplicative, we call it \defi{weakly multiplicative}.
\end{definition}

We considered this property in detail previously \cite{DM2}. In particular, we show that it is possible for a summation to preserve all products that exist in its domain, and yet not be weakly multiplicative \cite{DM2}[Example 3.7]. However, if a summation $\Sum$ is weakly multiplicative, then it has a unique minimal multiplicative extension $\M \Sum$. We write $\WSums \R \Codomain$ for the set of weakly multiplicative summations on $\R$ to $\Codomain$; organized by inclusion, $\WSums \R \Codomain$ is the full subcategory of $\Sums \R \Codomain$ with objects the weakly multiplicative summations. 

\begin{defn}\label{Definition: Rational Extension}
	For a multiplicative summation $(\Domain, \Sum) \in \MSums \R \Codomain$, the \defi{rational extension} $(\Q\Domain, \Q\Sum)$ of $(\Domain, \Sum)$ is defined as follows. For $X \in \R[[\sigma]]$, we say $X \in \Q\Domain$ if there exists $A, B \in \Domain,$ $b \in \Reg \Codomain$, and $x \in \Codomain$ such that $A = BX$, $\Sum(B) = b$, and $\Sum(A) = b x$. We define
	\begin{align*}
		\Q\Sum &: \Q\Domain \to \Codomain, \\
		\Q\Sum &: X \mapsto x \ \text{if} \ X \ \text{is as above.}
	\end{align*}
	We extend this definition to weakly multiplicative summations $(\Domain, \Sum) \in \WSums \R \Codomain$ by setting $(\Q\Domain, \Q\Sum) \coloneqq (\Q\M\Domain, \Q\M\Sum)$.
\end{defn}

Repeating (essentially) the proof of Theorem \ref{Theorem: Telescopic Extension}, we obtain the following theorem \cite{DM2}[Theorem 5.3].

\begin{theorem}\label{Theorem: Rational Extension}
	The functor of summations
	\[
		\Q : \WSums \R \Codomain \to \MSums \R \Codomain
	\]
	is an idempotent extension map which subsumes $\M$, $\T$, $\M \T$, and $\T \M$.
\end{theorem}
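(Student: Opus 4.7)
The plan is to mimic the proof of Theorem \ref{Theorem: Telescopic Extension}, replacing the polynomial multiplier $F \in \R[\sigma]$ by an arbitrary $B \in \Domain$ and exploiting multiplicativity of $\Sum$ wherever a cancellation is required. Since the definition for weakly multiplicative $\Sum$ reads $(\Q\Domain, \Q\Sum) \coloneqq (\Q\M\Domain, \Q\M\Sum)$, it suffices to treat the case $\Sum \in \MSums \R \Codomain$ and then invoke the construction of $\M \Sum$.

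First I would verify that $\Q \Sum$ is well-defined: if $X$ has two witnesses $(A_1, B_1, b_1, x_1)$ and $(A_2, B_2, b_2, x_2)$, then $B_2 A_1 = B_1 B_2 X = B_1 A_2$ in $\Domain$, so applying $\Sum$ multiplicatively gives $b_1 b_2 x_1 = b_1 b_2 x_2$, and since $b_1 b_2 \in \Reg \Codomain$ we may cancel. The axioms are then straightforward: closure under sums and products of $X_1, X_2 \in \Q \Domain$ with witnesses $(A_i, B_i, b_i, x_i)$ follows from the identities $B_1 B_2 (X_1 + X_2) = B_2 A_1 + B_1 A_2$ and $B_1 B_2 (X_1 X_2) = A_1 A_2$, whose $\Sum$-values are $b_1 b_2 (x_1 + x_2)$ and $b_1 b_2 x_1 x_2$ respectively, and $\R$-scalar multiplication is analogous. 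Axiom \ref{Condition: Factors by (1 - sigma)} holds because $B \cdot (1 - \sigma) X = (1 - \sigma) A \in (1 - \sigma) \Domain \subseteq \Domain$ maps to $0 = b \cdot 0$. The extension property is witnessed by $(X, 1, 1, \Sum(X))$ for $X \in \Domain$, and idempotence $\Q \Q \Sum = \Q \Sum$ comes from combining a level-two witness with its level-one data, multiplying through by the inner $B$'s to produce a witness entirely in $\Domain$.

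For the subsumption claims: $\Q \Sum$ is itself multiplicative, so by minimality of $\M \Sum$ we obtain $\M \Sum \subseteq \Q \Sum$; every telescopic witness $(FX, F, f, x)$ with $F \in \R[\sigma] \subseteq \Domain$ is automatically a rational witness, giving $\T \Sum \subseteq \Q \Sum$; and idempotence then extends these containments to $\M \T \Sum$ and $\T \M \Sum$. The main obstacle, as in Theorem \ref{Theorem: Telescopic Extension}, is well-definedness together with closure under multiplication: both rest on applying $\Sum$ to products of witness data, so multiplicativity of the starting summation is essential and the construction does not generalize verbatim to arbitrary $\Sum \in \Sums \R \Codomain$. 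The only other subtle point is to ensure the cancelling products like $b_1 b_2$ remain in $\Reg \Codomain$, which is automatic because $\Reg \Codomain$ is multiplicatively closed.
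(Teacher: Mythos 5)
Your proof is correct and takes the same route the paper intends: the paper gives no argument for Theorem \ref{Theorem: Rational Extension}, citing \cite{DM2} and remarking only that one repeats (essentially) the proof of Theorem \ref{Theorem: Telescopic Extension}. Your write-up is precisely that adaptation, with the well-definedness and closure computations correctly resting on multiplicativity of $\Sum$ and on $\Reg{\Codomain}$ being closed under products.
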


If $\Sum = \Q\Sum$, we say that $\Sum$ is \defi{rationally closed}.

\begin{prop}\label{Proposition: The image of Q(Sum) is the field of fractions of Sum(Domain)}
	Let $\Codomain$ be a field. For any multiplicative summation $(\Domain, \Sum) \in \MSums \R \Codomain$, the image of $\Q \Domain$ under $\Q\Sum$ is the field of fractions of $\Sum(\Domain)$.
\end{prop}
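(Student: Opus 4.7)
The forward inclusion $\Q\Sum(\Q\Domain) \subseteq \mathrm{Frac}(\Sum(\Domain))$ is immediate from Definition \ref{Definition: Rational Extension}: for any $X \in \Q\Domain$, one has $\Q\Sum(X) = \Sum(A)/\Sum(B)$ with $\Sum(B)$ nonzero in the field $\Codomain$, so $\Q\Sum(X)$ lies in the fraction field of $\Sum(\Domain)$ regarded as a subfield of $\Codomain$.

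For the reverse inclusion, my plan is to exploit that $\Q\Sum$ is itself a multiplicative summation by Theorem \ref{Theorem: Rational Extension}, so $\Q\Sum(\Q\Domain)$ is a subring of $\Codomain$ containing $\Sum(\Domain)$. It therefore suffices to exhibit a multiplicative inverse in $\Q\Sum(\Q\Domain)$ for every nonzero $b \in \Sum(\Domain)$; once this is in hand, the subring automatically contains every quotient $a/b$, hence all of $\mathrm{Frac}(\Sum(\Domain))$.

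The only real obstacle is this invertibility step: given $B_0 \in \Domain$ with $\Sum(B_0) = b \neq 0$, one would like to take $X = B_0^{-1}$, but $B_0$ need not be a unit in $\R[[\sigma]]$. The trick I would use is to replace $B_0$ by an equivalent-under-$\Sum$ representative whose constant term is a unit of $\R$, namely
\[
B := B_0 + \bigl(1 - (B_0)_0\bigr)(1 - \sigma),
\]
where $(B_0)_0$ denotes the constant term. The correction lies in $\R[\sigma] \subseteq \Domain$, so $B \in \Domain$; axiom \ref{Condition: Factors by (1 - sigma)} kills the correction under $\Sum$, giving $\Sum(B) = b$; and a direct computation gives $(B)_0 = 1$, so $B$ is a unit of $\R[[\sigma]]$.

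Then, setting $X := B^{-1} \in \R[[\sigma]]$ and $A := 1$, the triple $(A, B, X)$ satisfies $A = BX$, $\Sum(B) = b \in \Reg \Codomain$, and $\Sum(A) = 1 = b \cdot (1/b)$. By Definition \ref{Definition: Rational Extension}, this witnesses $X \in \Q\Domain$ with $\Q\Sum(X) = 1/b$, concluding the plan.
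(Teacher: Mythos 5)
Your proof is correct and follows essentially the same route the paper takes (the paper's own argument, echoed in the proof of Lemma \ref{Lemma: The image of U(Sum) is a purely inseparable extension of Sum(Domain)}, likewise reduces to inverting a representative after adjusting it modulo $(1-\sigma)$ to make it a unit of $\R[[\sigma]]$, there via $X \mapsto 1 - \sigma + \sigma^2 X$ rather than your correction $B_0 \mapsto B_0 + (1-(B_0)_0)(1-\sigma)$). Both adjustments are killed by axiom \ref{Condition: Factors by (1 - sigma)} and produce a series with unit constant term, so the two arguments are interchangeable.
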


In \cite{DM2} we posed the following question.	

\begin{quest}
	What is the multiplicative fulfillment of a weakly multiplicative summation?
\end{quest}

In this paper, we answer this question for multiplicative summations whose codomains are algebraically closed fields (see Theorems \ref{Theorem: Univalent Extension} and \ref{Theorem: Univalent Extension is multiplicative fulfillment}).

The remainder of this paper is organized as follows. Fix a summation $\Sum$. In Section \ref{Section: The Algebra of Summations} we define the scalar polynomial of a series $X \in \R[[\sigma]]$, and and show that the roots of this scalar polynomial determine all images of $X$ under extensions of $\Sum$. In Section \ref{Section: Absolutely Algebraic Series}, we make a more careful study of the absolutely $\Sum$-algebraic series, which are precisely those series contained in every multiplicative extension of $\Sum$ (see Proposition \ref{Proposition: The absolutely algebraic series are the intersection of all superdomains}). In Section \ref{Section: Univalent Extensions}, we use the theory of absolutely $\Sum$-algebraic series to characterize the multiplicative fulfillment of $\Sum$. Finally, in Section \ref{Section: Future work} we discuss some directions for future work.

\section*{Acknowledgments}

We thank Asher Auel for insightful conversations.

\section{The Algebra of Summations}\label{Section: The Algebra of Summations}

Throughout the remainder of this paper, we assume our summations are multiplicative, and when we speak of extensions, fulfillments, and so forth, we mean \emph{multiplicative} extensions, \emph{multiplicative} fulfillments, and so forth. We also assume that the codomains of our (multiplicative) summations are \emph{algebraic closed fields} unless otherwise noted. This is not an onerous assumption; if $\Codomain$ is a field and $\overline{\Codomain}$ is the algebraic closure of $\Codomain$, the inclusion map $\iota : \Codomain \into \overline{\Codomain}$ induces injections 
\begin{align*}
\MSums \R \Codomain &\into \MSums \R {\overline{\Codomain}}, \\ 
\WSums \R \Codomain &\into \WSums \R {\overline{\Codomain}}, \\ 
\Sums \R \Codomain &\into \Sums \R {\overline{\Codomain}}
\end{align*}
via $\Sum \mapsto \iota \circ \Sum$.

A \defi{series polynomial} is simply a polynomial $P(T) = \sum\limits_{k = 0}^n P_k T^k$ with coefficients in $\R[[\sigma]]$. If $P(T) \in \Domain(T)$, we write $\Sum(P)(t)$ for the polynomial $\sum\limits_{k = 0}^n \Sum (P_k) t^k \in \Codomain[t]$. 

\begin{definition}
	For a series $X \in \R[[\sigma]]$ and a summation $(\R,\Codomain,\Domain,\Sum)$ , let 
\[
{\rm A}(X; \Sum) \coloneqq \set{P(T) \ : \ P(T) \in \Domain[T], \ \Sum(P)(t) \neq 0, \ P(X)=0}.
\]

A polynomial $P(T)$ in ${\rm A}(X; \Sum)$ for which $\Sum(P)(t)$ has minimal degree will be called a \defi{$\Sum$-minimal polynomial} for $X$. If ${\rm A}(X; \Sum)$ is nonempty, we say that the \defi{$\Sum$-degree} of $X$ is the smallest degree of a $\Sum$-minimal polynomial for $X$.
\end{definition}

\begin{example}  The series 
\[
X = 2 - \frac{\sigma}4  - \frac{\sigma^2}{64}-\frac{\sigma^3}{512}- \frac{5\sigma^4}{16384}- \cdots,
\]
obtained as the Taylor series of $\sqrt{4-\sigma}$, satisfies $X^2 - (4-\sigma) = 0$, but satisfies no nontrivial linear equation with coefficients in $\bbC[\sigma]$. Thus the polynomial $P(T) = T^2 - (4-\sigma)$ is minimal and $\Add$-minimal for $X$ over $\bbC[\sigma]$, and $\Add(P)(t)=t^2-3$.
\end{example} 

\begin{example}\label{AddInfinite} The series 
\[
Y = 1+\frac{\sigma}{2} + \frac{3\sigma^2}{8} + \frac{5\sigma^3}{16} + \frac{35\sigma^4}{128}+ \cdots,
\]
obtained as the Taylor series of $(1-\sigma)^{-1/2}$, satisfies $(1-\sigma)Y^2 - 1 = 0$, but satisfies no nontrivial linear equation with coefficients in $\bbC[\sigma]$. The polynomial $Q(T) = (1-\sigma)T^2 - 1$ is minimal and $\Add$-minimal for $Y$ over $\bbC[\sigma]$, and $\Add(Q)(t) = -1$ is constant.  
\end{example}

\begin{prop}\label{Proposition: proper sums don't let calM have monomials}
	If $\R$ is an integral domain, and $X\in\R[[\sigma]]$ is nonzero, then no monomial is an element of ${\rm A}(X; \Sum)$.
\end{prop}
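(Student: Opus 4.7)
The plan is to proceed by direct contradiction. Suppose $P(T) \in \Domain[T]$ is a monomial in $\mathrm{A}(X;\Sum)$; then by definition $P(T) = c T^{k}$ for some $c \in \Domain \subseteq \R[[\sigma]]$ and some integer $k \geq 0$. The condition $\Sum(P)(t) \neq 0$ says $\Sum(c)\, t^{k} \neq 0$ in $\Codomain[t]$, which forces $\Sum(c) \neq 0$, and in particular $c \neq 0$ (since $\Sum$ is $\R$-linear and $\Sum(0)=0$).

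Now I would use the condition $P(X) = 0$, i.e.\ $c X^{k} = 0$ in $\R[[\sigma]]$. The key structural input is that when $\R$ is an integral domain, the formal power series ring $\R[[\sigma]]$ is also an integral domain (one checks that the lowest-degree coefficients of a product of nonzero series multiply to a nonzero element of $\R$). If $k \geq 1$, then since $X \neq 0$ we get $X^{k} \neq 0$, and combined with $c \neq 0$ this gives $c X^{k} \neq 0$, contradicting $P(X) = 0$. If $k = 0$, then $P(X) = c$, and we already have $c \neq 0$, an immediate contradiction.

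There is no real obstacle here: the result is a straightforward consequence of the integral-domain property of $\R[[\sigma]]$ together with the nontriviality requirement $\Sum(P)(t) \neq 0$ built into the definition of $\mathrm{A}(X;\Sum)$. The only point worth flagging explicitly is that one must rule out the degree-zero case ($k=0$) in parallel with the positive-degree case, but both fall to the same observation that $c$ cannot simultaneously be zero (so that $P(X)=0$) and nonzero (so that $\Sum(P)(t)\neq 0$).
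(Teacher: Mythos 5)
Your proof is correct and follows essentially the same route as the paper: the paper's one-line argument (``let $A_j$ and $X_k$ be the first nonzero coefficients of $A$ and $X$; then $(AX^n)_{j+kn}\neq 0$'') is precisely the lowest-nonzero-coefficient computation showing $\R[[\sigma]]$ is an integral domain, which you invoke explicitly, and your extraction of $c\neq 0$ from $\Sum(P)(t)\neq 0$ is the same (implicit) step. Your separate treatment of the $k=0$ case is a harmless bit of extra care; nothing further is needed.
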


\begin{proof}
Let $A_j$ and $X_k$ be the first nonzero coefficients of $A$ and $X$ respectively. We see $(AX^n)_{j+kn}\neq 0$.
\end{proof}

Example \ref{dual} below shows Proposition \ref{Proposition: proper sums don't let calM have monomials} need not hold if $\R$ has zero divisors.

Given a summation $(\R,\Codomain,\Domain,\Sum)$ and a series $X \in \R[[\sigma]]$, it is possible that a polynomial $P(T) \in \Domain[T]$ satisfies $P(X)=0$ for some series $X\in\R[[\sigma]]$, but that $\Sum(P)(t) = 0$. In this case, the polynomial relation $P(X) = 0$ implies nothing about the possible values of $X$ in an extension of $\Sum$. This circumstance may even be universal among polynomial $P(T) \in \Domain[T]$ with $P(X) = 0$, as Example \ref{PseudoTrans} shows. Such an $X$ may be formally algebraic over $\Domain$, but still ``behaves like a transcendental series'' in that $X$ may consistently be assigned any value in $\Codomain$ in an appropriate extension of $\Sum$ (see Theorem \ref{Theorem: Summation-structure of Sum-*** Series}).  

\begin{example}\label{PseudoTrans} Let $U,V \in \bbC[[\sigma]]$ be invertible (\emph{i.e.} $U_0$ and $V_0$ are nonzero) and algebraically independent over $\bbC[\sigma]$, and let $\Domain \coloneqq \bbC[\sigma][U,V]$. We see that $\Domain$ is the  ring of all series of the form $\sum_{i, j} A_{ij}U^iV^j$ where $A_{ij}\in\bbC[\sigma]$ and the double sum is finitely supported. Let $\Sum(U)=\Sum(V)=0$; then $\Sum \left(\sum A_{ij}U^iV^j\right ) = \Add(A_{00})$. If we let $X \coloneqq UV^{-1}$, then $VX-U=0$, and $P(X) = 0$ implies $\Sum(P)(t) = 0$.
\end{example}

However, the next proposition says that this cannot happen for $(\R[\sigma],\Add)$ if addition is proper (that is, if $x\mapsto\Add(x+0+0+\cdots)$ is injective).

\begin{prop}\label{NotNull} For any ring $\R$, if $\Add$ is proper, $X \in \R[[\sigma]]$, $P \in \R[\sigma][T]$, $P(X) = 0$, and $P \neq 0$,  then there exists $P' \in \R[\sigma][T]$ such that $\Add(P')(t) \neq 0$ and $P'(X)=0$.
\end{prop}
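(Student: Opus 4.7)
The plan is to reformulate the vanishing of $\Add(P)(t)$ as an algebraic divisibility condition, then systematically strip the offending factor from $P$ without destroying the relation $P(X) = 0$. Writing $P(T) = \sum_k P_k(\sigma) T^k$, the definition of $\Add$ gives $\Add(P)(t) = \sum_k f(P_k(1)) t^k$, where $f : \R \to \Codomain$ is the structure map of Example \ref{Example: Defining Add}. This polynomial is zero precisely when $f(P_k(1)) = 0$ for every $k$. Properness of $\Add$ means $f$ is injective, so this forces $P_k(1) = 0$ for every $k$. Polynomial division by the monic polynomial $\sigma - 1$ (which is valid over any commutative ring) shows $P_k(1) = 0$ if and only if $(\sigma - 1) \mid P_k(\sigma)$ in $\R[\sigma]$. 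Consequently, $\Add(P)(t) = 0$ if and only if $(\sigma - 1) \mid P(T)$ in $\R[\sigma][T]$.

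Given this equivalence, I would extract the maximal power of $\sigma - 1$ from $P$. Each successive division by $\sigma - 1$ strictly decreases the $\sigma$-degree of every nonzero coefficient of $P$, so the process terminates at some $n \geq 0$ with $P = (\sigma - 1)^n P'$ and $(\sigma - 1) \nmid P'$. The equivalence above then yields $\Add(P')(t) \neq 0$ immediately. Moreover, because $\sigma - 1$ is monic in $\sigma$, it is a nonzerodivisor in $\R[\sigma][T]$, and since $P \neq 0$ we conclude $P' \neq 0$.

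The last step is to verify $P'(X) = 0$. Here one leaves the polynomial ring and works in $\R[[\sigma]]$, using that $1 - \sigma$ is a unit there with inverse $\sum_{i \geq 0} \sigma^i$; hence $(\sigma - 1)^n$ is a unit in $\R[[\sigma]]$, so the equation $0 = P(X) = (\sigma - 1)^n P'(X)$ forces $P'(X) = 0$. The main subtlety in the whole argument is that $\R$ is permitted to have zero divisors, so $\R[\sigma]$ need not be a domain, let alone a UFD. The argument nevertheless goes through because $\sigma - 1$ is \emph{monic} in $\sigma$ (which makes polynomial division and the nonzerodivisor properties work over arbitrary $\R$) and because $1 - \sigma$ is a \emph{unit} in the enlarged ring $\R[[\sigma]]$ (which is what lets us cancel $(\sigma - 1)^n$ at the level of series).
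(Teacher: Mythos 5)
Your proof is correct and follows essentially the same route as the paper's: use properness to translate $\Add(P)(t)=0$ into divisibility of $P$ by $1-\sigma$ in $\R[\sigma][T]$, strip off the maximal power of $1-\sigma$ (your monic-division argument for why this terminates over a non-domain is a nice explicit justification of the paper's induction), and then cancel $(1-\sigma)^n$ against $P(X)=0$ in $\R[[\sigma]]$. The only cosmetic difference is at the last step, where the paper cancels by noting that multiplication by $1-\sigma$ preserves the lowest nonzero coefficient (i.e.\ it is a nonzerodivisor in $\R[[\sigma]]$), while you invoke the equivalent fact that $1-\sigma$ is a unit there.
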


\begin{proof}
If $x \mapsto \Add(x+0+0+\cdots)$ is injective, then for $P \in \R[\sigma]$, we have $\Add(P)(t) = 0$ if and only if $1-\sigma$ divides $P$.

Suppose now that $\Add$ is proper, $P(X) = 0$, and $P \neq 0$. If $\Add(P)(t) \neq 0$, then we are done. Otherwise, by properness, the term $1 - \sigma$ divides $P(T)$. By induction the power of $1 - \sigma$ dividing $P(T)$, there exists $n > 0$ and $P\prm(T) \in \R[\sigma][T]$ such that 
\[
P(T) = (1-\sigma)^n P\prm(T) = (1 - \sigma)^n \sum_k P'_k T^k
\]
and $\Add(P\prm)(t) \neq 0$. By assumption,
\[
0 = P(X) = (1-\sigma)^n \sum_k P'_k X^k,
\]
and $\Add(P'_k)\neq 0$ for some $k$. But multiplication by $(1-\sigma)$ leaves the first nonzero term of a series unchanged; so $\sum_k P'_k X^k = 0$ as claimed.
\end{proof}

\begin{example}\label{Grandi} The Grandi series 
\[
G_{-1} \coloneqq \sum_{n=0}^\infty (-1)^n \sigma^n = 1-\sigma+\sigma^2-\sigma^3+\cdots = 1-1+1-1+\cdots
\]
can (infelicitously) be telescoped with shift 2, via the equation $(1-\sigma^2)G_{-1} - (1-\sigma) = 0$. Summing the coefficients of the polynomial $(1 - \sigma^2)T - (1 - \sigma)$ yields $0t-0$, which places no constraints on the sum of the Grandi series. However, dividing our polynomial by $1-\sigma$, we obtain $(1+\sigma)T - 1$ which sums to $2t-1$, so the Grandi series should sum to $\frac{1}{2}$ (as it does using many classical summation methods).
\end{example}

The next example illustrates why we stipulate that $\Add$ is proper in Proposition \ref{NotNull}.

\begin{example}\label{NotInjective} Let $\Add_2:\bbZ[\sigma]\rightarrow\bbZ/2\bbZ$, and let $X \coloneqq 2G_{-1} = 2-2+2-2+\cdots$; this has minimal polynomial $P_X(T) \coloneqq (1+\sigma)T-2$. But $\Add_2(P_X)(t) = 0t+0$, the null polynomial, and so $\Add_2(P_X)(t) = 0$ for all $t\in\bbZ/2\bbZ$.
\end{example}

In general, $\Sum$-minimal polynomials need not be minimal, and minimal polynomials need not be $\Sum$-minimal. 

\begin{example}\label{dual}
	Let $\R = \bbC[\epsilon]/\epsilon^2\bbC[\epsilon]$ be the dual numbers over $\bbC$, let $\Codomain = \bbC$, and let $\Sum \in \MSums {\R}{\bbC}$ be the summation with domain 
\[
	\Domain \coloneqq \set{X + \epsilon Y \ : \ X, \ Y \in \R[\sigma]},
\] which is defined by $\Sum : X + \epsilon Y \mapsto \Add(X)$. Finally, let $Y \coloneqq \frac{\epsilon}{1 - \sigma} = \epsilon + \epsilon + \epsilon + \ldots$, let $F(T) = \epsilon T$, and let $G(T) = T^2 + \epsilon H(T)$ where $H(T) \in \R[T]$ is arbitrary. Then $F(T)$ is a minimal polynomial for $Y$ over $\Domain$, but as $\Sum (F) (t) = 0$, $F(T)$ is not a $\Sum$-minimal polynomial for $Y$. On the other hand, $G(T)$ is a $\Sum$-minimal polynomial for $Y$, but not a minimal polynomial. By varying $H(T)$, we also see $\Sum$-minimal polynomials need not be unique even up to scalars or degree.
\end{example}

In the last example, the polynomial $F(T)$ failed to be $\Sum$-minimal because $\Sum$ mapped it to the null polynomial, which is treated specially in the definition. (Proposition \ref{NotNull} above shows that this cannot happen for $\Add$.)  A minimal polynomial can also fail to be $\Sum$-minimal because $\Sum$ lets another polynomial ``jump the queue''.

\begin{example}
Let 
$$X \coloneqq 1 + \frac{\sigma}{4} + \frac{9\sigma^2}{64} + \frac{49 \sigma^3}{512} + \frac{1165\sigma^4}{16384} + \cdots$$ 
be the unique real series satisfying $(1-\sigma)X^3 + X - 2 = 0$, and let
$$A \coloneqq X^2 = 1 + \frac{\sigma}{2} + \frac{11\sigma^2}{32} + \frac{67 \sigma^3}{256} + \frac{1719\sigma^4}{8192} + \cdots.$$ 
Let $\Domain \coloneqq \bbR[\sigma][A]$ and let $\Sum$ take finitely supported sums $\sum_{j} P_jA^j$ to $\sum_{j} 4^jP_j(1)$. Then $T^2-A$ is minimal for $X$ over $\Domain$, because no linear polynomial with coefficients in $\Domain$ annihilates $X$. However, $(1-\sigma)T^3 + T - 2 = 0$ is $\Sum$-minimal, because its image is the linear polynomial $t-2$, and any lower-degree polynomial annihilating $X$ would be constant, hence necessarily the null polynomial. 
\end{example}

Although $\Sum$-minimal polynomials are not unique, their images under $\Sum$ are, up to a constant: if $P_1(T)$ and $P_2(T)$ are $\Sum$-minimal polynomials for $X$, then $\Sum (P_1) (t) = a \cdot \Sum (P_2) (t)$ for some unit $a \in \Codomain^\times$. The roots of such images are the possible sums for $X$ in an extension of $\Sum$ (see Theorem \ref{Theorem: Summation-structure of Sum-*** Series}). For convenience, we pick a canonical representative.

\begin{definition}
	If $\Sum (P) (t) = 0$ for every $\Sum$-minimal polynomial for $X$, we define the \defi{scalar polynomial} $\scalpoly X (t)$ for $X$ over $\Sum$ to be $\scalpoly X (t) \coloneqq 0$. (Note that this may be true either vacuously, when $X$ is transcendental over $\Domain$, or nonvacuously, as in Example \ref{PseudoTrans}.) Otherwise, we define the \defi{scalar polynomial} $\scalpoly X(t)$ for $X$ over $\Sum$ to be the unique monic polynomial in $\Codomain[t]$ which is a nonzero scalar multiple of $\Sum (P) (t)$ for some (equivalently, for every) $P(T)$ that is $\Sum$-minimal for $X$. We define the \defi{scalar degree} of $X$ to be the degree of its scalar polynomial.
\end{definition}

Note that even if $\R$ is a field, there may be series $X \in \R[[\sigma]]$ for which the degree of $\scalpoly X (t)$ is less than the degree of every $\Sum$-minimal polynomial $P(T)$ for $X$.

\begin{example}
	Let $\R = \Codomain = \bbC$ and let $Y$ be the Taylor series of
$$
		 \frac{1 + \sqrt{1 - 4 \sigma + 4 \sigma^3}}{2 - 2 \sigma}; \label{Not absolutely Sum-algebraic}
$$
	that is,
$$
	Y = 1 - \sigma - 2\sigma^2 - 5\sigma^3 - 13\sigma^4 - 36\sigma^5 - 104\sigma^6 - \dots.
$$
Then $(\sigma - 1) T^2 + T - (\sigma + \sigma^2) \in \bbC[\sigma][T]$ is an $\Add$-minimal polynomial for $Y$, and thus $\scalpoly Y (t) = t - 2$ is the scalar polynomial for $Y$. But $Y$ is not a rational function of $\sigma$, so does not have a $\Add$-minimal polynomial of degree 1. 
\end{example}

\begin{defn}\label{Definition: Sum-*** series}
	We partition the series in $\R[[\sigma]]$ based on their scalar polynomials as follows:
	\begin{itemize}
		\item A series $X \in \R[[\sigma]]$ is \defi{multiplicatively $\Sum$-transcendental} if its scalar polynomial $\scalpoly X (t)$ is 0. We write $\Trans \Sum \subseteq \R[[\sigma]]$ for the set of $\Sum$-transcendental series;
		\item A series $X \in \R[[\sigma]]$ is \defi{multiplicatively $\Sum$-algebraic} if its scalar polynomial $\scalpoly X (t)$ is nonconstant. We write $\Alg \Sum \subseteq \R[[\sigma]]$ for the set of $\Sum$-algebraic series;
		\item A series $X \in \R[[\sigma]]$ is \defi{multiplicatively $\Sum$-infinite} if its scalar polynomial $\scalpoly X (t)$ is 1. We write $\Inf \Sum \subseteq \R[[\sigma]]$ for the set of $\Sum$-infinite series.
	\end{itemize}
\end{defn}

Continuing our standard practice, we suppress the adverb ``multiplicatively'' when discussing $\Sum$-transcendental, $\Sum$-algebraic, and $\Sum$-infinite series where this does not result in ambiguity.

We observe that every summation $\Sum \in \MSums \R \Codomain$, the set of formal series $\R[[\sigma]]$ may be partitioned as 
\begin{align}
	\R[[\sigma]] = \Trans \Sum \sqcup \Alg \Sum \sqcup \Inf \Sum.\label{Partition of R[[sigma]]}
\end{align}
It is easy to see that each transcendental series is $\Sum$-transcendental. On the other hand, each $\Sum$-algebraic series and each $\Sum$-infinite series is algebraic. We note (see Example \ref{AddInfinite}) that multiplicatively $\Sum$-infinite series need not be $\Sum$-infinite under the definition given in \cite{Dawson}, which pertains only to linear equations $AX+B=0$ with $A,B\in\Domain$, $\Sum(A)=0$.

\begin{proposition}\label{Proposition: If P(X) = 0 then the scalar polynomial divides Sum P(X)}
	Let $X \in \R[[\sigma]]$ be any series, and let $Q(T) \in \Domain[T]$ be a polynomial such that $Q(X) = 0$. Then $\scalpoly X(t)$ divides $\Sum (Q) (t)$.
\end{proposition}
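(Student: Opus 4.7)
The plan is to combine the minimality of the scalar degree with a pseudo-division argument lifted from $\Codomain[t]$ to $\Domain[T]$. If $\Sum(Q)(t) = 0$, then $\scalpoly{X}(t) \mid \Sum(Q)(t)$ trivially, so assume $\Sum(Q)(t) \neq 0$. Then $Q(T) \in {\rm A}(X;\Sum)$, so ${\rm A}(X;\Sum)$ is nonempty and $X$ has a $\Sum$-minimal polynomial $P(T) \in \Domain[T]$. Write $p(t) := \Sum(P)(t)$ of degree $d$ and $q(t) := \Sum(Q)(t)$ of degree $e$; by the minimality of $d$, we have $e \geq d$. Since $\scalpoly{X}(t)$ is a unit scalar multiple of $p(t)$, it suffices to prove that $p(t)$ divides $q(t)$ in $\Codomain[t]$.

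Let $c := \Sum(P_d)$, the nonzero leading coefficient of $p(t)$. Pseudo-division of $q(t)$ by $p(t)$---iteratively eliminating the leading term of the running remainder by subtracting a suitable monomial multiple of $p(t)$ after first multiplying by $c$---produces polynomials $h(t), r(t) \in \Codomain[t]$ satisfying
\[
c^{e-d+1}\, q(t) \;=\; h(t)\, p(t) + r(t), \qquad \deg r < d.
\]
The coefficients of $h(t)$ and $r(t)$ are integer-polynomial expressions in the coefficients of $p(t)$ and $q(t)$, hence lie in the subring $\Sum(\Domain) \subseteq \Codomain$. Applying these same polynomial expressions to the series $P_k, Q_k \in \Domain$ yields an $H(T) \in \Domain[T]$ with $\Sum(H)(t) = h(t)$.

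Now define $R(T) := P_d^{e-d+1}\, Q(T) - H(T)\, P(T) \in \Domain[T]$. Since $P(X) = Q(X) = 0$, we have $R(X) = 0$; and since $\Sum$ is a ring homomorphism, $\Sum(R)(t) = c^{e-d+1} q(t) - h(t) p(t) = r(t)$. If $r(t)$ were nonzero, then $R(T)$ would witness a polynomial in ${\rm A}(X;\Sum)$ whose $\Sum$-image has degree less than $d$, contradicting the minimality of $P(T)$. Hence $r(t) = 0$, giving $c^{e-d+1} q(t) = h(t)\, p(t)$, and dividing by the unit $c^{e-d+1} \in \Codomain^\times$ yields $p(t) \mid q(t)$, and therefore $\scalpoly{X}(t) \mid \Sum(Q)(t)$.

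The main obstacle is the lifting step: plain Euclidean division in $\Codomain[t]$ would require dividing by $c$, and the quotient would generally have coefficients in the fraction field of $\Sum(\Domain)$ that need not lie in the image of $\Sum$ at all. Pseudo-division, by premultiplying the dividend by $c^{e-d+1}$, keeps every intermediate coefficient inside $\Sum(\Domain)$ and so permits lifting back into $\Domain[T]$, where the minimality of $P(T)$ can be invoked.
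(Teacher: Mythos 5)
Your proof is correct and follows essentially the same route as the paper, which simply invokes ``the Euclidean algorithm'' applied to $\Sum(P)(t)$ and $\Sum(Q)(t)$ together with the minimality of the degree of $\Sum(P)(t)$. Your use of pseudo-division rather than plain division is a genuine refinement: it correctly addresses the lifting issue (the quotient from ordinary division in $\Codomain[t]$ need not have coefficients in the subring $\Sum(\Domain)$, so the remainder could not otherwise be realized as $\Sum(R)(t)$ for some $R(T)\in\Domain[T]$ with $R(X)=0$), a detail the paper's one-line argument leaves implicit.
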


\begin{proof}	
	Let $P(t) \in \Domain[t]$ be a $\Sum$-minimal polynomial for $X$, and let $Q(t) \in \Domain[t]$ be any polynomial with $Q(X) = 0$. The proposition is immediate by applying the Euclidean algorithm to $\Sum(P)(t)$ and $\Sum (Q) (t)$.
\end{proof}

\begin{corollary}\label{Corollary: Scalar polynomials lose factors under extensions}
	Suppose $(\Domain\prm, \Sum\prm)$ extends $(\Domain, \Sum)$, and let $X \in \R[[\sigma]]$ be any series. If $\scalpoly X(t)$ is the scalar polynomial for $X$ over $\Domain$, and $\scalpoly X\prm(t)$ is the scalar polynomial for $X$ over $\Domain\prm$, then $\scalpoly X\prm(t)$ divides $\scalpoly X(t)$.
\end{corollary}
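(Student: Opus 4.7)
The plan is to reduce the claim to a direct application of Proposition \ref{Proposition: If P(X) = 0 then the scalar polynomial divides Sum P(X)} applied over the extension $(\Domain\prm, \Sum\prm)$, using a witness polynomial coming from $\Domain$. First I would dispose of the degenerate case: if $\scalpoly X(t) = 0$, then every polynomial in $\Codomain[t]$ divides $\scalpoly X(t)$ trivially, so the conclusion is automatic. Hence assume $\scalpoly X(t) \neq 0$.

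Next, choose a $\Sum$-minimal polynomial $P(T) \in \Domain[T]$ for $X$, so that $P(X) = 0$, $\Sum(P)(t) \neq 0$, and $\scalpoly X(t)$ is the unique monic scalar multiple of $\Sum(P)(t)$. Because $(\Domain\prm, \Sum\prm)$ extends $(\Domain, \Sum)$, we have $\Domain \subseteq \Domain\prm$ and $\Sum\prm|_{\Domain} = \Sum$. Therefore $P(T) \in \Domain\prm[T]$, the equation $P(X) = 0$ persists, and coefficient-by-coefficient we have $\Sum\prm(P)(t) = \Sum(P)(t)$, which is still nonzero. In particular, $P(T) \in {\rm A}(X; \Sum\prm)$, so ${\rm A}(X; \Sum\prm)$ is nonempty and $\scalpoly X\prm(t)$ is a well-defined (monic, nonzero) polynomial in $\Codomain[t]$.

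Now apply Proposition \ref{Proposition: If P(X) = 0 then the scalar polynomial divides Sum P(X)} to the summation $(\Domain\prm, \Sum\prm)$ and the polynomial $P(T)$: since $P(X) = 0$, the scalar polynomial $\scalpoly X\prm(t)$ for $X$ over $\Sum\prm$ divides $\Sum\prm(P)(t) = \Sum(P)(t)$. By construction $\Sum(P)(t)$ is a nonzero scalar multiple of $\scalpoly X(t)$, so $\scalpoly X\prm(t)$ divides $\scalpoly X(t)$, as required.

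The proof has essentially no obstacle: the only subtlety is remembering that the scalar polynomial is defined as zero precisely in the vacuous/pseudo-transcendental case, so one must check that the argument goes through even when $X$ may fail to admit a $\Sum$-minimal polynomial with nonzero image over $\Sum$ but admits one over $\Sum\prm$ (or vice versa). The monotonicity in the nontrivial case is forced by the fact that a $\Sum$-minimal witness remains a valid (though possibly no longer minimal) witness over any extension.
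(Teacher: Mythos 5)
Your proof is correct and is exactly the argument the paper intends: the corollary is stated without proof as an immediate consequence of Proposition \ref{Proposition: If P(X) = 0 then the scalar polynomial divides Sum P(X)}, applied over $(\Domain\prm,\Sum\prm)$ to a $\Sum$-minimal witness $P(T)\in\Domain[T]\subseteq\Domain\prm[T]$. Your handling of the degenerate case $\scalpoly X(t)=0$ and the observation that a $\Sum$-minimal witness remains a valid (if not minimal) witness over the extension are both right.
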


Let $P(T) \in \Domain[T]$ be a polynomial of degree $m$. Following \cite{Graham-Knuth-Patashnik}[Chapter 7.3], if $m \neq \infty$, we define the \defi{reflected polynomial} of $P(T)$ to be 
\begin{align}
\reflected{P}(T) &\coloneqq T^m P(T\inv).\label{Definition of reflected polynomial}
\end{align} 
Otherwise $P(T) = 0$, and we define $\reflected{P}(T) \coloneqq 0$.

\begin{corollary}\label{Corollary: scalar polynomial of inverse divides reflected polynomial}
	Let $U \in \R[[\sigma]]$ be a unit, and suppose $P(T) \in \Domain[T]$ is a $\Sum$-minimal polynomial for $U$. Then $\scalpoly {U\inv} (T)$ divides $\Sum (\reflected{P})(T)$ .
\end{corollary}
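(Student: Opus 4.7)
The plan is to reduce this to Proposition \ref{Proposition: If P(X) = 0 then the scalar polynomial divides Sum P(X)} by exhibiting $\reflected{P}$ as a polynomial in $\Domain[T]$ that annihilates $U\inv$. To that end, write $P(T) = \sum_{k=0}^{m} P_k T^k$ where $m = \deg P$, so that by the definition in equation \eqref{Definition of reflected polynomial} we have
\[
\reflected{P}(T) \;=\; T^m P(T\inv) \;=\; \sum_{k=0}^{m} P_k T^{m-k}.
\]
Since each $P_k \in \Domain$, this is a polynomial in $\Domain[T]$.

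Next, because $U \in \R[[\sigma]]$ is a unit, its inverse $U\inv$ lies in $\R[[\sigma]]$, so the evaluation $\reflected{P}(U\inv)$ is defined. Direct substitution gives the identity
\[
\reflected{P}(U\inv) \;=\; (U\inv)^m \, P\bigl((U\inv)\inv\bigr) \;=\; U^{-m} P(U),
\]
and since $P(U) = 0$ by hypothesis we obtain $\reflected{P}(U\inv) = 0$.

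Finally, applying Proposition \ref{Proposition: If P(X) = 0 then the scalar polynomial divides Sum P(X)} with $X \coloneqq U\inv$ and $Q \coloneqq \reflected{P}$ yields that $\scalpoly{U\inv}(T)$ divides $\Sum(\reflected{P})(T)$, as required. There is no real obstacle here: the entire content of the corollary is the reflection-inversion identity $\reflected{P}(U\inv) = U^{-m} P(U)$, after which Proposition \ref{Proposition: If P(X) = 0 then the scalar polynomial divides Sum P(X)} does all the work. (Note that we do not claim $\reflected{P}$ is itself $\Sum$-minimal for $U\inv$; it need not be, since some $P_k$ with small $k$ may be zero or $\Sum(\reflected{P})(t)$ may have lower degree than $\Sum(P)(t)$, but this only strengthens the divisibility conclusion.)
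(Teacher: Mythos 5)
Your proof is correct and matches the argument the paper intends: the corollary is stated without proof precisely because the identity $\reflected{P}(U\inv) = U^{-m}P(U) = 0$ reduces it immediately to Proposition \ref{Proposition: If P(X) = 0 then the scalar polynomial divides Sum P(X)}. Your parenthetical remark that $\reflected{P}$ need not be $\Sum$-minimal for $U\inv$ is a worthwhile clarification, and the degenerate case $\scalpoly{U\inv}(t)=0$ is also handled correctly by the proposition's convention.
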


Write 
\begin{align}
	\Zeroes X \Sum &\coloneqq \set{x \in \Codomain \ : \ \scalpoly X(x) = 0} \label{Definition of Zeros X Sum}
\end{align} 
for the set of zeroes of the scalar polynomial $\scalpoly X (t)$ of $X$. Note that this is also the set of common zeroes of the sums of the polynomials in ${\rm A}(X; \Sum)$.

\begin{cor}
	For every summation $\Sum\prm \in \MSums \R \Codomain$ extending $\Sum$ and every $X \in \R[[\sigma]]$, we have 
	\[
		\Zeroes X \Sum \supseteq \Zeroes X {\Sum\prm}.
	\]
\end{cor}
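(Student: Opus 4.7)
The plan is to reduce this claim directly to Corollary \ref{Corollary: Scalar polynomials lose factors under extensions}. That corollary tells us that whenever $\Sum\prm$ extends $\Sum$, the scalar polynomial $\scalpoly X\prm(t)$ computed relative to $\Sum\prm$ divides $\scalpoly X(t)$ in $\Codomain[t]$. From divisibility alone, containment of root sets should fall out immediately.

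Concretely, first I would fix $X \in \R[[\sigma]]$ and invoke Corollary \ref{Corollary: Scalar polynomials lose factors under extensions} to pick $q(t) \in \Codomain[t]$ with $\scalpoly X(t) = \scalpoly X\prm(t) \cdot q(t)$. Then for any $x \in \Zeroes X {\Sum\prm}$, by the definition in \eqref{Definition of Zeros X Sum}, $\scalpoly X\prm(x) = 0$, hence $\scalpoly X(x) = \scalpoly X\prm(x) \cdot q(x) = 0$, so $x \in \Zeroes X {\Sum}$.

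The only subtlety is the degenerate case when one (or both) scalar polynomial vanishes identically. I would dispatch this at the end as a sanity check: if $\scalpoly X\prm(t) = 0$, then trivially $\Zeroes X {\Sum\prm} = \Codomain$, and divisibility forces $\scalpoly X(t) = \scalpoly X\prm(t) \cdot q(t) = 0$ as well, whence $\Zeroes X \Sum = \Codomain$ too, so the inclusion holds (in fact as equality). Conversely, if $\scalpoly X\prm(t) \neq 0$ while $\scalpoly X(t) = 0$, then $\Zeroes X \Sum = \Codomain$ and the inclusion is automatic. I do not anticipate a real obstacle here; the work was already done in establishing Proposition \ref{Proposition: If P(X) = 0 then the scalar polynomial divides Sum P(X)} and its Corollary \ref{Corollary: Scalar polynomials lose factors under extensions}, and what remains is only the elementary remark that in $\Codomain[t]$ every root of a divisor is a root of the multiple (with the convention that the zero polynomial vanishes everywhere).
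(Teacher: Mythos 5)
Your proof is correct and is exactly the derivation the paper intends (the corollary is stated without proof, immediately following Corollary \ref{Corollary: Scalar polynomials lose factors under extensions} and the definition of $\Zeroes X \Sum$ as the zero set of the scalar polynomial). Your handling of the degenerate case $\scalpoly X\prm(t) = 0$ is consistent with the paper, since by Proposition \ref{Proposition: Every Sum-*** Series is a Sum'-*** Series} a $\Sum\prm$-transcendental series is also $\Sum$-transcendental, so both zero sets are all of $\Codomain$ there.
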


For $X \in \R[[\sigma]]$ and $x \in \Codomain$, write $\Sum_{X,x} : \Domain[X]\to \Codomain$ for the summation given by
	\[
		\Sum_{X,x} : P(X) \mapsto \Sum (P) (x) \ \text{if} \ P(T) \in \Domain[T],
	\]
	whenever this summation is well-defined. Clearly $\Sum_{X,x}$ is a multiplicative summation whenever it is well-defined.

\begin{theorem}\label{Theorem: Summation-structure of Sum-*** Series}
	Fix a multiplicative summation $(\Domain, \Sum) \in \MSums \R \Codomain$.
	\begin{enumerate}[label=(\alph*)]
		\item Let $X \in \Trans \Sum$. Then for every $x \in \Codomain$, there exists a multiplicative summation $\Sum\prm$ extending $\Sum$ such that $\Sum\prm(X) = x$; moreover, $\Sum_{X,x} : \Domain[X] \to \Codomain$ is the unique minimal multiplicative extension of $\Sum$ which maps $X$ to $x$. \label{Summing Sum-transcendental series}
		\item Let $X \in \Alg \Sum$. Then for every $x \in \Codomain$ a root of $\scalpoly X (t)$, there exists a multiplicative summation $\Sum\prm$ extending $\Sum$ such that $\Sum\prm(X) = x$; moreover, $\Sum_{X,x} : \Domain[X] \to \Codomain$ is the unique minimal multiplicative extension of $\Sum$ which maps $X$ to $x$. Conversely, if $\Sum\prm$ is a multiplicative extension of $\Sum$ for which $\Sum\prm(X)$ is defined, then $\Sum\prm(X)$ is a root of $\scalpoly X (t)$. \label{Summing Sum-algebraic series}
		\item Let $X \in \Inf \Sum$. Then $X$ is not in the domain of any multiplicative extension of $\Sum$. \label{Summing Sum-infinite series}
	\end{enumerate}
\end{theorem}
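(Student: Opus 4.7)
The plan is to handle parts (a) and (b) together by defining $\Sum_{X,x}$ and verifying in turn that it is well-defined, that it is a multiplicative summation extending $\Sum$, and that it is the unique minimal such extension with the claimed property; and to handle part (c) by a short contradiction using Corollary \ref{Corollary: Scalar polynomials lose factors under extensions}.

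For part (a), given $X \in \Trans\Sum$ and any $x \in \Codomain$, I set $\Sum_{X,x}(P(X)) \coloneqq \Sum(P)(x)$ for $P(T) \in \Domain[T]$. Well-definedness amounts to showing that $P(X) = Q(X)$ implies $\Sum(P)(x) = \Sum(Q)(x)$: but then $(P-Q)(X) = 0$, and because $X$ is $\Sum$-transcendental the image $\Sum(P-Q)(t)$ must be identically zero in $\Codomain[t]$, so the two values agree. That $\Sum_{X,x}$ is an $\R$-algebra homomorphism from the $\R$-subalgebra $\Domain[X] \subseteq \R[[\sigma]]$ to $\Codomain$ with $\Sum_{X,x}(1) = 1$ is routine (since $\Sum$ is already multiplicative on coefficients, $\Sum(PQ)(t) = \Sum(P)(t) \cdot \Sum(Q)(t)$), and factoring through $(1 - \sigma)$ follows from the same property for $\Sum$ applied coefficient-wise. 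For uniqueness and minimality, any multiplicative extension $\Sum''$ of $\Sum$ with $\Sum''(X) = x$ must send $P(X)$ to $\Sum(P)(x)$ by the $\R$-algebra homomorphism property, so it agrees with $\Sum_{X,x}$ on $\Domain[X]$.

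Part (b) is analogous, with well-definedness now relying on Proposition \ref{Proposition: If P(X) = 0 then the scalar polynomial divides Sum P(X)}: if $(P-Q)(X) = 0$ then $\scalpoly X(t) \mid \Sum(P-Q)(t)$ in $\Codomain[t]$, so evaluating at any root $x \in \Zeroes X \Sum$ gives $\Sum(P)(x) = \Sum(Q)(x)$. The rest of the verification mirrors part (a). For the converse, if $\Sum''$ is a multiplicative extension of $\Sum$ with $X$ in its domain and $P(T)$ is any $\Sum$-minimal polynomial for $X$, then $0 = \Sum''(P(X)) = \Sum(P)(\Sum''(X))$; since $\Sum(P)(t)$ is a nonzero scalar multiple of $\scalpoly X(t)$, the value $\Sum''(X)$ must be a root of $\scalpoly X(t)$.

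For part (c), I argue by contradiction. Suppose $X \in \Inf\Sum$ lies in the domain $\Domain''$ of some multiplicative extension $\Sum''$. Then $T - X \in \Domain''[T]$ annihilates $X$ with image $t - \Sum''(X) \neq 0$, so $X$ is not $\Sum''$-transcendental and $\scalpoly X''(t)$ is nonzero; by Corollary \ref{Corollary: Scalar polynomials lose factors under extensions}, $\scalpoly X''(t)$ divides $\scalpoly X(t) = 1$, forcing $\scalpoly X''(t) = 1$. Hence there exists $Q(T) \in \Domain''[T]$ with $Q(X) = 0$ and $\Sum''(Q)(t) = c$ a nonzero constant; applying $\Sum''$ to $Q(X) = 0$ yields $0 = \Sum''(Q)(\Sum''(X)) = c \neq 0$, the desired contradiction. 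The main subtlety across all three parts is keeping straight the interaction between the polynomial identity $P(X) = 0$ in $\R[[\sigma]]$ and the polynomial $\Sum(P)(t) \in \Codomain[t]$; once that bookkeeping is in place, each case reduces to a short verification using the earlier results.
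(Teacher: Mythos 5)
Your proposal is correct and follows essentially the same route as the paper: define $\Sum_{X,x}$ by $P(X) \mapsto \Sum(P)(x)$, check well-definedness via $\Sum$-transcendence in (a) and via Proposition \ref{Proposition: If P(X) = 0 then the scalar polynomial divides Sum P(X)} in (b), and obtain (c) by applying the putative extension to an annihilating polynomial whose image is a nonzero constant. The only cosmetic difference is in (c), where your detour through Corollary \ref{Corollary: Scalar polynomials lose factors under extensions} is unnecessary: the witness $P(T)\in\Domain[T]$ with $P(X)=0$ and $\Sum(P)(t)=p\in\Codomain^\times$ already lies in the extension's domain and yields the contradiction directly, which is how the paper argues.
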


\begin{proof} 
	\begin{enumerate}[label=(\alph*)]
		\item\label{Extension by arbitrary transcendental series} Let $X$ be $\Sum$-transcendental, and let $x \in \Codomain$ be arbitrary. We claim 
		\[
		\Sum_{X,x} : \Domain[X] \to \Codomain
		\]
		is well-defined; indeed, if $A(X) = B(X)$ then $\Sum (A-B)(t) = 0$ by the definition of $\Sum$-transcendence, and in particular $\Sum (A-B)(x) = 0.$ It is clear that $\Sum_{X,x}$ is a multiplicative summation, and moreover that $\Sum_{X,x}$ is minimal among summations $\Sum\prm$ extending $\Sum$ for which $\Sum\prm (X) = x$.

		\item\label{Extension by roots of scalar polynomials} Let $X$ be $\Sum$-algebraic, and let $x$ be a root of $\scalpoly X (t)$ in $\Codomain$. The proof that $\Sum_{X,x}$ is well-defined is similar to the one given in \ref{Extension by arbitrary transcendental series} above. Suppose that $A(X) = B(X)$ for $A(T), B(T) \in \Domain[T]$. Then $\Sum (A-B)(x)= 0$, and so $\scalpoly X(t)$ divides $\Sum (A-B)(t)$; say $\Sum (A-B)(t) = \scalpoly X(t) \cdot r(t) \in \Codomain[t]$. Then
		\[
			\Sum (A-B)(x) = \scalpoly X(x) \cdot r(x) = 0,
		\]
		and so $\Sum_{X,x}$ is well-defined. 
		
		Conversely, suppose $\Sum\prm$ is a multiplicative extension of $\Sum$ for which $\Sum\prm(X)$ is defined. Choose $P(T) \in \Domain[T]$ a $\Sum$-minimal polynomial, and write $\Sum (P)(t) = a \cdot \scalpoly X(t)$ with $a \in \Codomain^\times$. Then
		\[
			a \cdot \scalpoly X(\Sum\prm(X)) = \Sum\prm(X) = \Sum (P)(X) = 0,
		\] 
		and so $\Sum\prm(X)$ is a root of $\scalpoly X(t)$ as desired.

		\item Let $X$ be $\Sum$-infinite, and choose a polynomial $P(T) \in \Domain[T]$ with $P(X) = 0$ and $\Sum (P)(t) = p \in \Codomain^\times$. Suppose by way of contradiction that $\Sum\prm$ is an extension of $\Sum$ with $X$ in its domain. Then 
		\[
			0 \neq p = \parent{\Sum\prm P}\parent{\Sum\prm X} = \Sum\prm \parent{P(X)} = 0,
		\]
		which is a contradiction.
	\end{enumerate}
\end{proof}

\begin{corollary}\label{Corollary: Alternate Characterization for Sum-*** Series}
	If $\Sum \in \MSums \R \Codomain$ is a multiplicative summation, for any $X \in \R[[\sigma]]$ we have
\begin{equation}
		\Zeroes X \Sum = \set{x \in \Codomain \ : \ x = \Sum\prm (X) \ \text{for some} \ \Sum\prm \supseteq \Sum},  \label{Characterizing Set for Sum-*** Series}
\end{equation}
	and:
	\begin{enumerate}[label=(\alph*)]
		\item the series $X$ is $\Sum$-transcendental if and only if $\Zeroes X \Sum = \Codomain$\label{infinite zeros}
		\item the series $X$ is $\Sum$-algebraic if and only if $\Zeroes X \Sum$ is finite and nonempty;\label{finite zeros}
		\item and the series $X$ is $\Sum$-infinite if and only if $\Zeroes X \Sum = \emptyset$.\label{no zeros}
	\end{enumerate}
\end{corollary}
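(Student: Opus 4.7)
The plan is to deduce everything directly from Theorem \ref{Theorem: Summation-structure of Sum-*** Series} together with Definition \ref{Definition: Sum-*** series} and the three-way partition \eqref{Partition of R[[sigma]]}. Since the trichotomy in (a)--(c) corresponds exactly to the three pieces of that partition, I would prove only the forward implications in each case and invoke disjointness to get the converses for free.

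First I would establish (a)--(c) by inspecting $\scalpoly X(t)$ in each piece of the partition. If $X \in \Trans \Sum$, then $\scalpoly X(t) = 0$ and, by the convention that every scalar is a zero of the zero polynomial, $\Zeroes X \Sum = \Codomain$. If $X \in \Inf \Sum$, then $\scalpoly X(t) = 1$ has no roots, so $\Zeroes X \Sum = \emptyset$. If $X \in \Alg \Sum$, then $\scalpoly X(t)$ is nonconstant of finite degree, so by algebraic closure of $\Codomain$ it has at least one but only finitely many roots. The three possible shapes of $\Zeroes X \Sum$ are pairwise disjoint (an algebraically closed field is infinite), so the partition \eqref{Partition of R[[sigma]]} yields the three converses at once.

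Next I would establish the set identity \eqref{Characterizing Set for Sum-*** Series} by walking through the same three cases. For the inclusion $\subseteq$, given $x \in \Zeroes X \Sum$: in the $\Sum$-transcendental case the existence of $\Sum\prm$ is given by Theorem \ref{Theorem: Summation-structure of Sum-*** Series}\ref{Summing Sum-transcendental series}; in the $\Sum$-algebraic case $x$ is by construction a root of $\scalpoly X(t)$, so Theorem \ref{Theorem: Summation-structure of Sum-*** Series}\ref{Summing Sum-algebraic series} applies; and in the $\Sum$-infinite case $\Zeroes X \Sum$ is empty so there is nothing to check. For the inclusion $\supseteq$, given $x = \Sum\prm(X)$ for some extension $\Sum\prm$: the transcendental case is trivial since $\Zeroes X \Sum = \Codomain$; the algebraic case is exactly the converse clause of Theorem \ref{Theorem: Summation-structure of Sum-*** Series}\ref{Summing Sum-algebraic series}; and the infinite case is vacuous by Theorem \ref{Theorem: Summation-structure of Sum-*** Series}\ref{Summing Sum-infinite series}, which forbids any extension of $\Sum$ admitting $X$ in its domain.

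Because the entire argument is essentially a translation of Theorem \ref{Theorem: Summation-structure of Sum-*** Series} into the language of $\Zeroes X \Sum$, I do not expect any real obstacle. The only bookkeeping subtleties are the convention that the zero polynomial vanishes on all of $\Codomain$, which is needed for the transcendental half of (a), and the use of algebraic closure to ensure a nonconstant scalar polynomial actually has a root, which is needed for the nonempty clause of (b).
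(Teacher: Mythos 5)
Your proof is correct and is essentially the argument the paper intends: the corollary is stated without proof as an immediate consequence of Theorem \ref{Theorem: Summation-structure of Sum-*** Series}, the definition of the scalar polynomial, and the partition \eqref{Partition of R[[sigma]]}, which is exactly the translation you carry out. Your explicit attention to the two bookkeeping points (the zero polynomial vanishing everywhere, and algebraic closure guaranteeing a root of a nonconstant polynomial) matches the paper's own remark that without algebraic closure the ``only if'' in (b) and the ``if'' in (c) would fail.
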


\begin{obs} As stated above, we are implicitly assuming $\Codomain$ to be algebraically closed. Theorem \ref{Theorem: Summation-structure of Sum-*** Series} does not require this hypothesis; but without it, the corollary is weaker. While \ref{infinite zeros} holds unchanged,  ``if and only if'' is replaced by ``if'' in \ref{finite zeros}, and by ``only if'' in \ref{no zeros}.  
\end{obs}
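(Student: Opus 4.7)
The plan is to revisit the proof of Theorem \ref{Theorem: Summation-structure of Sum-*** Series} to confirm that nowhere does it rely on algebraic closure, and then to trace through the corollary to pinpoint exactly where algebraic closure was being used in each direction.

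First I would examine each of parts \ref{Summing Sum-transcendental series}, \ref{Summing Sum-algebraic series}, and \ref{Summing Sum-infinite series} of the theorem. In each, the argument manipulates polynomial identities in $\Codomain[t]$, applies the Euclidean algorithm (via Proposition \ref{Proposition: If P(X) = 0 then the scalar polynomial divides Sum P(X)}), and checks well-definedness of $\Sum_{X,x}$. None of these steps appeals to the existence of roots in $\Codomain$; they require only that $\Codomain[t]$ be a PID, which holds for any field. So the theorem stands verbatim, and equation (\ref{Characterizing Set for Sum-*** Series}) also holds in all three cases of the partition (\ref{Partition of R[[sigma]]}), since in each case the attainable sums agree with the roots of $\scalpoly X$ in $\Codomain$.

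Next I would verify the retained directions of the corollary. For \ref{infinite zeros}: the forward direction is immediate from $\scalpoly X = 0$. For the reverse direction I would use that $\Codomain$, although no longer assumed algebraically closed, is still an infinite field, so $\Zeroes X \Sum = \Codomain$ forces $\scalpoly X$ to have infinitely many roots and hence to vanish. For the ``if'' direction of \ref{finite zeros}: finiteness of $\Zeroes X \Sum$ (with $\Codomain$ infinite) prevents $\scalpoly X = 0$, while nonemptiness prevents the monic polynomial $\scalpoly X$ from equalling $1$, so $\scalpoly X$ is nonconstant. For the ``only if'' direction of \ref{no zeros}: if $X$ is $\Sum$-infinite then $\scalpoly X = 1$, which manifestly has no roots.

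The source of the weakening---and the only real obstacle---is that algebraic closure was used solely to guarantee that every nonconstant polynomial in $\Codomain[t]$ has a root in $\Codomain$. Without this, a nonconstant $\scalpoly X$ may have no roots in $\Codomain$, whence a $\Sum$-algebraic series $X$ can have $\Zeroes X \Sum = \emptyset$. This simultaneously breaks the ``only if'' direction of \ref{finite zeros} (nonemptiness fails) and the ``if'' direction of \ref{no zeros} (the empty zero set may arise from a nonconstant scalar polynomial rather than from $\scalpoly X = 1$). A concrete witness can be built with $\R = \Codomain = \bbR$ by arranging $\scalpoly X = t^2 + 1$, so I would include such an example to confirm that no stronger statement is available.
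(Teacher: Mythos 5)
Your proposal is correct and is essentially the argument the paper intends: the observation is stated there without proof, and the example immediately following it (the Taylor series of $\sqrt{1+4\sigma}$ over $\bbQ$, which is $\Add_{\bbQ}$-algebraic with $\Zeroes X {\Add_{\bbQ}} = \emptyset$) plays exactly the role of your witness with $\scalpoly X (t) = t^2 + 1$ over $\bbR$ (realizable, e.g., by the Taylor series of $\sqrt{1-2\sigma}$). One caution: the surviving reverse direction of \ref{infinite zeros} and the ``if'' direction of \ref{finite zeros} genuinely require $\Codomain$ to be infinite, and this does not follow from merely dropping algebraic closure (finite fields exist); the paper's claim that \ref{infinite zeros} ``holds unchanged'' tacitly makes the same assumption, but you should state it as a hypothesis rather than assert that $\Codomain$ ``is still an infinite field.''
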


\begin{example} Let $X$ and $(\Domain,\Sum)$ be as in Example \ref{PseudoTrans}. As shown, $X$ is algebraic over $\Domain$, with minimal polynomial $VX-U$, so $X$ is algebraic; but $\scalpoly X (t) = 0$, so $\Zeroes X \Sum = \bbC$. 
\end{example}

\begin{example} Let $X$ and $(\Domain,\Sum)$ be as in Example \ref{NotInjective}. As shown, $X$ is algebraic over $\Domain$, with minimal polynomial $(1+\sigma)X-2$, so $X$ is algebraic; but $\scalpoly X (t) = 0$, so $\Zeroes X \Sum = \bbZ/2\bbZ$. 
\end{example}

\begin{example}
Consider the summation $(\bbQ[\sigma],\Add_{\bbQ})\in\MSums\bbQ\bbQ$. The Taylor series $X$ for $\sqrt{1+4\sigma}$ is
\[X = 1+ 2 \sigma - 2 \sigma^2 + 4 \sigma^3 - 10 \sigma^4 + 28 \sigma^5 - \cdots.\]
The series $X$ satisfies $X^2 - (1+\sigma) = 0$, and is thus $\Add_{\bbQ}$-algebraic; but $\Zeroes X {\Add_{\bbQ}}$ is empty. 
\end{example}

In \cite{DM2}, we showed that two sums can be consistent without being multiplicatively compatible. In the same spirit we now construct a series $Y$ which is summed to a unique value by every summation compatible with $\Add$, but which nonetheless is not in the multiplicative fulfillment of $\Add$.

\begin{example}\label{PickOne}
	Let $Y$ be as in Example \ref{Not absolutely Sum-algebraic}, and define $Y\prm \coloneqq \frac{1}{1-\sigma} - Y$. We see that $Y$ and $Y\prm$ have the same scalar polynomial $\scalpoly Y (t) = \scalpoly {Y\prm} (t) = t - 2$; therefore, by Theorem \ref{Theorem: Summation-structure of Sum-*** Series}, every summation compatible with $\Add$ that sums $Y$ sums it to $2$, and the same holds for $Y\prm$. Moreover, the summations $\Add_{Y, 2}$ and $\Add_{Y\prm, 2}$ are examples of such summations, so these assertions are nonvacuous. But now if $Y$ and $Y\prm$ were in the multiplicative fulfillment of $\Add$, then 
	\[
	Y + Y\prm = \frac{1}{1-\sigma} =  \sum_{j=0}^\infty \sigma^j
	\]
	would also be in the multiplicative fulfillment of $\Add$, which is absurd.
\end{example}

Example \ref{PickOne} motivates the following definition.

\begin{defn}\label{Definition: Absolutely Sum-algebraic}
	A $\Sum$-algebraic series $X \in \Alg \Sum$ is \defi{absolutely $\Sum$-algebraic} if it is $\Sum\prm$-algebraic for every $\Sum\prm$ extending $\Sum$. We write $\AbsAlg \Sum$ for the set of absolutely $\Sum$-algebraic series; obviously, $\AbsAlg \Sum \subseteq \Alg \Sum$. 
\end{defn}

\begin{example} The series $Y'$ of Example \ref{PickOne} is $\Add$-algebraic, but $\Add_{Y,2}$-infinite; therefore it is not absolutely $\Add$-algebraic. However, as shown in \cite{Dawson}, any series that is telescopable over $\Add$ is telescopable to the same value over any summation whatsoever; thus, for instance, over any field in which $0 \neq 2$, the Grandi series $G_{-1}$ (Example \ref{Grandi}) is absolutely $\Add$-algebraic.
\end{example}

We could define absolutely $\Sum$-transcendental series and absolutely $\Sum$-infinite series similarly, but these definitions would be superfluous; no $\Sum$-transcendental series is absolutely $\Sum$-transcendental, and every $\Sum$-infinite series is absolutely $\Sum$-infinite. 

\begin{prop}\label{Proposition: Every Sum-*** Series is a Sum'-*** Series}
	Let $(\Domain\prm, \Sum\prm)$ be a multiplicative extension of $(\Domain, \Sum)$. Then \begin{enumerate}[label=(\alph*)]
	\item $\Trans \Sum \supseteq \Trans {\Sum\prm}$,
	\item $\AbsAlg \Sum \subseteq \AbsAlg {\Sum\prm}$,
	\item $\Inf \Sum \subseteq \Inf {\Sum\prm}$.
	\end{enumerate}
\end{prop}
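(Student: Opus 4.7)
The plan is to deduce all three parts from results already established in this section. Parts (a) and (c) fall out of Corollary \ref{Corollary: Scalar polynomials lose factors under extensions}, which states that $\scalpoly X\prm(t)$ divides $\scalpoly X(t)$ whenever $(\Domain\prm,\Sum\prm)$ extends $(\Domain,\Sum)$; part (b) follows from the definition of absolute $\Sum$-algebraicity together with the transitivity of the extension relation. The whole argument is a short exercise in unpacking definitions.

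For part (a), suppose $X \in \Trans{\Sum\prm}$, so that $\scalpoly X\prm(t) = 0$. The only element of $\Codomain[t]$ divisible by $0$ is $0$ itself, so the corollary forces $\scalpoly X(t) = 0$, and hence $X \in \Trans \Sum$. For part (c), suppose $X \in \Inf \Sum$, so that $\scalpoly X(t) = 1$. The divisors of $1$ in $\Codomain[t]$ are exactly the nonzero constants, while by construction $\scalpoly X\prm(t)$ is either $0$ or monic, so the only possibility consistent with the corollary is $\scalpoly X\prm(t) = 1$, giving $X \in \Inf{\Sum\prm}$.

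For part (b), let $X \in \AbsAlg \Sum$ and let $\Sum''$ be any multiplicative extension of $\Sum\prm$. Because extension is transitive, $\Sum''$ also extends $\Sum$, and so $X$ is $\Sum''$-algebraic by the hypothesis on $X$. Since $\Sum''$ was an arbitrary multiplicative extension of $\Sum\prm$, this shows $X \in \AbsAlg{\Sum\prm}$.

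There is no substantive obstacle to overcome; the only care needed is to correctly handle the two edge cases of divisibility in $\Codomain[t]$ (namely that $0 \mid p$ only when $p = 0$, and that the monic divisors of $1$ consist only of $1$) that appear in parts (a) and (c).
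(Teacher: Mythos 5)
Your proof is correct. Part (b) is exactly the paper's argument (transitivity of the extension relation). For parts (a) and (c) the paper argues directly from the definitions---for (a) it notes that any $P(T)\in\Domain[T]$ with $P(X)=0$ also lies in $\Domain\prm[T]$ with $\Sum\prm(P)(t)=\Sum(P)(t)$, and for (c) it transports a witness polynomial with $\Sum(P)(t)\in\Codomain^\times$ up to $\Domain\prm[T]$---whereas you route both parts through Corollary \ref{Corollary: Scalar polynomials lose factors under extensions}. This is essentially the same computation in different packaging, since that corollary is itself proved by precisely this transport of polynomials; your version is marginally slicker, and you correctly handle the two divisibility edge cases ($0\mid p$ only when $p=0$, and the only monic divisor of $1$ being $1$), which is exactly where a careless rendering of this route would break down.
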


\begin{proof}
	Suppose $X \in \Trans {\Sum\prm}$. Then for every polynomial $P(T) \in \Domain\prm[T]$ with $P(X) = 0$, we have $\Sum\prm (P)(t) = 0$. \textit{A fortiori}, for every polynomial $P(T) \in \Domain[T]$ with $P(X) = 0$, we have $\Sum (P)(t) = 0$. Thus $\Trans \Sum \supseteq \Trans {\Sum\prm}$.

	Suppose $X \in \AbsAlg \Sum$. Then $X$ is $\Sum^{\prime \prime}$-algebraic for every extension $\Sum^{\prime \prime}$ of $\Sum$. But every extension of $\Sum\prm$ is also an extension of $\Sum$, and so we see $X \in \AbsAlg {\Sum\prm}$ as desired.

	Finally, suppose $X \in \Inf \Sum$. Then there exists a polynomial $P(T) \in \Domain[T]$ with $P(X) = 0$, and $\Sum (P)(t) = a \in \Codomain^\times$. Then viewing $P(T)$ as an element of $\Domain\prm[T]$, we see that $X \in \Inf {\Sum\prm}$.
\end{proof}

In classical analysis, the convergence or divergence of a series $X$ is unaffected by changing finitely many terms of $X$. Within our axiomatic paradigm for summing series, something analogous holds true.

\begin{defn}\label{Definition: Tail-equivalence}
	Two series $X$ and $Y$ in $\R[[\sigma]]$ are \defi{tail-equivalent} if there exist $m,n \in \bbN$ such that $\lambda^m(X) = \lambda^n(Y)$, where 
	\[
	\lambda : \sum\limits_{n = 0}^\infty A_n\sigma^n \mapsto \sum\limits_{n = 0}^\infty A_{n+1}\sigma^n
	\]
	is the left-shift operator. We call the tail-equivalence class $[X]$ of $X$ the \defi{tail} of $X$.
\end{defn}

Informally, two series are tail-equivalent if they can be made equal by removing a finite set of terms from each; moreover, tail-equivalence is the symmetric closure of the relation
\[
\exists \ n \in \bbN, F \in R[\sigma] \ : \ X = F +\sigma^nY.
\]
This formulation is useful because the operator $\lambda$ does not have an internal representation in $R[[\sigma]]$.

\begin{theorem}\label{TailPoly}
If $P(T)$ is a $\Sum$-minimal polynomial for $X$, and $F \coloneqq \sum_{j=0}^{n-1}X_j \sigma^j$, then $P'(T) \coloneqq P(F+\sigma^n T)$ is a $\Sum$-minimal polynomial for $\lambda^n(X)$.  Conversely, if $Q(T) = \sum_{j=0}^m Q_j T^j$ is a $\Sum$-minimal polynomial for $\lambda^n(X)$, then 
$Q\prm(T) \coloneqq \sum_{j=0}^m \sigma^{n(m-j)} Q_j (T-F)^j$
is a $\Sum$-minimal polynomial for $X$.
\end{theorem}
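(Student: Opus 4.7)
The plan is to introduce paired transformations $\phi, \psi : \Domain[T] \to \Domain[T]$ which invert one another up to multiplication by $\sigma^{nm}$, and use them to translate $\Sum$-minimality between $X$ and $\lambda^n(X)$. Motivated by the decomposition $X = F + \sigma^n \lambda^n(X)$, set $\phi(P)(T) \coloneqq P(F + \sigma^n T)$ and, for $Q(T) = \sum_{j=0}^{m} Q_j T^j$ of degree $m$, set $\psi(Q)(T) \coloneqq \sum_{j=0}^{m} \sigma^{n(m-j)} Q_j (T - F)^j$; the latter is formally $\sigma^{nm} Q((T - F)/\sigma^n)$ with the spurious denominator cleared. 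Both send $\Domain[T]$ to itself because $\Domain$ contains $\R[\sigma]$ and is an $\R$-algebra. The two halves of the theorem then assert precisely that $\phi$ and $\psi$ exchange $\Sum$-minimal polynomials for $X$ and for $\lambda^n(X)$.

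Two short computations do all the work. The vanishing statements $\phi(P)(\lambda^n X) = P(X) = 0$ and $\psi(Q)(X) = \sigma^{nm} Q(\lambda^n X) = 0$ follow directly from the decomposition $X = F + \sigma^n \lambda^n(X)$ together with the identity $(X - F)^j = \sigma^{nj}(\lambda^n X)^j$. For the image under $\Sum$, I expand binomially and use that $\Sum$ is an $\R$-algebra map factoring through $1 - \sigma$, so that $\Sum(\sigma) = 1$ and $\Sum(F) = F(1)$; writing $f \coloneqq F(1)$, a routine application of the binomial theorem then yields
\[
\Sum(\phi(P))(t) = \Sum(P)(f + t), \qquad \Sum(\psi(Q))(t) = \Sum(Q)(t - f).
\]
These are just affine shifts in the argument, so both the degree and the non-vanishing of the scalar polynomial are preserved under $\phi$ and under $\psi$.

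The minimality transfer is then automatic. For the forward direction, $\phi(P) \in {\rm A}(\lambda^n X; \Sum)$ with $\deg \Sum(\phi(P)) = \deg \Sum(P)$; any competitor $R \in {\rm A}(\lambda^n X; \Sum)$ maps under $\psi$ to an element of ${\rm A}(X; \Sum)$ with $\deg \Sum(\psi(R)) = \deg \Sum(R)$, so $\Sum$-minimality of $P$ for $X$ forces $\deg \Sum(P) \leq \deg \Sum(R)$, which establishes $\Sum$-minimality of $\phi(P)$ for $\lambda^n(X)$. The converse direction is word-for-word symmetric, with the roles of $\phi$ and $\psi$ interchanged. The only mildly delicate point is verifying that $\psi$ genuinely lands in $\Domain[T]$ rather than requiring formal division by $\sigma^n$, which is exactly what the $\sigma^{n(m-j)}$ factors in the definition arrange; everything else is bookkeeping.
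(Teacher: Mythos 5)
Your proposal is correct and follows essentially the same route as the paper: both verify the two vanishing identities from the decomposition $X = F + \sigma^n\lambda^n(X)$ and then observe that, since $\Sum(\sigma)=1$, the substitutions act on the $\Sum$-images as the affine shifts $t \mapsto t \pm \Sum(F)$. The only cosmetic difference is in the final step, where you compare degrees directly against an arbitrary competitor via the inverse substitution, while the paper invokes Proposition \ref{Proposition: If P(X) = 0 then the scalar polynomial divides Sum P(X)} to get mutual divisibility of the shifted scalar polynomials; both yield the same conclusion.
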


\begin{proof}
 For ease of notation, write $Y \coloneqq \lambda^n(X)$. Clearly 
$$F+\sigma^n Y = \sum_{j=0}^{n-1}X_j\sigma^j + \sum_{j=n}^{\infty}X_j\sigma^j = X,$$
so $P'(Y) = P(F+\sigma^n Y) = P(X) = 0$. Conversely,
$$Q\prm(X) = \sum_{j=0}^m \sigma^{n(m-j)} Q_j (X-F)^j = \sigma^{nm} Q(Y) = \sigma^{nm} Q(Y)=0.$$
	Now write $\Sum (P)(t) = a \cdot \scalpoly X (t)$ with $a \in \Codomain^\times$, and $\Sum (Q)(t) = b \cdot \scalpoly Y (t)$ with $b \in \Codomain^\times$. Then $\Sum (P\prm)(t) = a \cdot \scalpoly X (x\prm + t)$ divides $\scalpoly Y (t)$ and $\Sum (Q\prm) (t) = b \cdot \scalpoly Y (t - x\prm)$ divides $\scalpoly X (t)$. We conclude that $\scalpoly X (x\prm + t)$ divides $\scalpoly Y (t)$, which in turn divides $\scalpoly X (x\prm + t)$, and so $\scalpoly X (x\prm + t) = \scalpoly Y (t)$ as desired. Now as $P\prm(Y) = 0$ and $\Sum (P\prm) (t)$ is a nonzero multiple of $\scalpoly Y (t)$, we conclude that $P\prm(T)$ is a $\Sum$-minimal polynomial for $Y$, and analogously $Q\prm(T)$ is a $\Sum$-minimal polynomial for $X$.
\end{proof}

\begin{cor}\label{Corollary: Tail-equivalence respects Sum-***}
	Let $X$ and $Y$ be tail-equivalent series. 
	\begin{enumerate}[label=(\alph*)]
		\item If $X$ is $\Sum$-transcendental, then $Y$ is also $\Sum$-transcendental.
		\item If $X$ is (absolutely) $\Sum$-algebraic, then $Y$ is also (absolutely) $\Sum$-algebraic, and $X$ and $Y$ have both the same $\Sum$-degree and the same scalar degree.
		\item If $X$ is $\Sum$-infinite, then $Y$ is also $\Sum$-infinite.
	\end{enumerate}
\end{cor}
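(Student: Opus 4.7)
The plan is to extract everything from Theorem \ref{TailPoly}, which already gives an explicit correspondence between $\Sum$-minimal polynomials for $X$ and those for $\lambda^n(X)$; moreover, the proof of that theorem establishes (up to the monic normalization) the translation relation $\scalpoly{\lambda^n(X)}(t) = \scalpoly X(x' + t)$ for some $x' \in \Codomain$ depending only on $F \coloneqq \sum_{j<n} X_j \sigma^j$ (concretely $x' = \Sum(F) = F(1)$, using that $\Sum(\sigma^n) = 1$ because $1 - \sigma^n \in (1-\sigma)\R[\sigma]$).

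First I would observe that tail-equivalence is symmetric and generated by the relation ``$Y = \lambda^n(X)$'', so it suffices to prove each implication under the assumption $Y = \lambda^n(X)$; the reverse direction is handled by the converse half of Theorem \ref{TailPoly}. The identity $\scalpoly Y(t) = \scalpoly X(x' + t)$ then immediately yields all three classification claims, because the two polynomials are translates in $\Codomain[t]$: they have the same degree, and one is zero (resp.\ constant, resp.\ nonconstant) if and only if the other is. This proves (a), (c), and the $\Sum$-algebraic plus scalar-degree portions of (b). The $\Sum$-degree equality in (b) follows because the correspondences $P(T) \mapsto P(F + \sigma^n T)$ and $Q(T) \mapsto Q\prm(T)$ of Theorem \ref{TailPoly} both preserve the degree in $T$.

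Finally, for the ``absolutely'' clause of (b): if $X \in \AbsAlg \Sum$, then $X \in \Alg{\Sum\prm}$ for every multiplicative extension $\Sum\prm$ of $\Sum$. The algebraic case of (b) just proved holds for an arbitrary multiplicative summation, so applying it with $\Sum\prm$ in place of $\Sum$ gives $Y \in \Alg{\Sum\prm}$ for every extension $\Sum\prm$ of $\Sum$; hence $Y \in \AbsAlg \Sum$.

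The main obstacle, if one were proving this from scratch, would be the translation identity $\scalpoly Y(t) = \scalpoly X(x' + t)$; but this is already embedded in the proof of Theorem \ref{TailPoly}, so the corollary reduces to a short bookkeeping argument that passes the classification between $X$ and $Y$ and then across arbitrary extensions.
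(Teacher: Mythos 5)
Your proposal is correct and matches the paper's intended argument: the corollary is stated without a separate proof precisely because it is the bookkeeping consequence of Theorem \ref{TailPoly} that you describe, namely the two degree-preserving correspondences of $\Sum$-minimal polynomials together with the translation identity $\scalpoly{\lambda^n(X)}(t) = \scalpoly X(\Sum(F) + t)$, applied over $\Sum$ and over every extension $\Sum\prm$ for the ``absolutely'' clause. The only point worth making explicit is that when $X$ is $\Sum$-transcendental both scalar polynomials are identically zero, so part (a) should be routed through the polynomial correspondence $Q(T) \mapsto Q\prm(T)$ itself (any $Q$ annihilating $\lambda^n(X)$ with $\Sum(Q)(t) \neq 0$ would yield such a $Q\prm$ for $X$) rather than through the translation identity for nonzero scalar polynomials; your degree argument already supplies exactly this correspondence.
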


Thus, it makes sense to call tails (absolutely) $\Sum$-algebraic, $\Sum$-transcendental, or $\Sum$-infinite, and, in the first case, to refer to the $\Sum$-degree or scalar degree of a tail.

\section{Absolutely $\Sum$-Algebraic Series}\label{Section: Absolutely Algebraic Series}

Let us take a closer look at the set $\AbsAlg \Sum$ of absolutely $\Sum$-algebraic series. We begin this section with a corrected version of Proposition 2.5i from \cite{Dawson}.

\begin{theorem}\label{Theorem: Ring-structure of Absolutely Sum-Algebraic Series}
	For every summation $\Sum \in \MSums \R \Codomain$, the set $\AbsAlg \Sum \subseteq \R[[\sigma]]$ is an $\R$-algebra.
\end{theorem}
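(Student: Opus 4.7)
The plan is to verify the three properties needed to conclude that $\AbsAlg \Sum$ is an $\R$-subalgebra of $\R[[\sigma]]$: it contains every constant series $r \in \R$, it is closed under addition, and it is closed under multiplication (closure under $\R$-scalar multiplication then follows). For the first property, the polynomial $T - r \in \Domain[T]$ annihilates the constant series $r$ and has nonconstant image $t - r$ under every multiplicative extension $\Sum\prm$ of $\Sum$; hence the scalar polynomial of $r$ over $\Sum\prm$ is nonconstant for every such $\Sum\prm$, and so $r \in \AbsAlg \Sum$. In particular $0, 1 \in \AbsAlg \Sum$.

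For closure under $+$ and $\cdot$, I would fix $X, Y \in \AbsAlg \Sum$ and an arbitrary multiplicative extension $(\Domain\prm, \Sum\prm)$ of $(\Domain, \Sum)$. By assumption, $X$ and $Y$ are $\Sum\prm$-algebraic, so they admit $\Sum\prm$-minimal polynomials $P(T), Q(T) \in \Domain\prm[T]$ of positive degrees $m$ and $n$ with $\Sum\prm(P)(t)$ and $\Sum\prm(Q)(t)$ nonconstant. The idea is to form the resultants
\[
	R_+(T) \coloneqq \mathrm{Res}_S\bigl(P(S),\, Q(T - S)\bigr), \qquad R_\times(T) \coloneqq \mathrm{Res}_S\bigl(P(S),\, S^n Q(T/S)\bigr),
\]
both of which lie in $\Domain\prm[T]$; because $S = X$ is a common root of the respective pairs when $T = X+Y$ (resp.\ when $T = XY$), we have $R_+(X+Y) = 0$ and $R_\times(XY) = 0$. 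Since $\Sum\prm$ is an $\R$-algebra homomorphism and the resultant is a polynomial expression in the coefficients, applying $\Sum\prm$ coefficient-wise gives $\Sum\prm(R_+)(t) = \mathrm{Res}_s\bigl(\Sum\prm(P)(s),\, \Sum\prm(Q)(t-s)\bigr)$, whose roots in $\Codomain$ are precisely the sums $\alpha + \beta$ with $\alpha$ a root of $\Sum\prm(P)$ and $\beta$ a root of $\Sum\prm(Q)$, and whose degree in $t$ is $mn \geq 1$; hence it is nonconstant, proving $X+Y$ is $\Sum\prm$-algebraic. An analogous computation for $R_\times$, using that its $\Sum\prm$-image has roots $\alpha\beta$ and degree $mn$ in $t$, shows $XY$ is $\Sum\prm$-algebraic. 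Since $\Sum\prm$ was arbitrary, $X+Y$ and $XY$ lie in $\AbsAlg \Sum$.

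The main subtlety is verifying that the $\Sum\prm$-images of the resultants are actually nonconstant polynomials in $t$ — that is, that applying $\Sum\prm$ does not inadvertently collapse the leading coefficient and thereby lower the degree in $t$ below $mn$. This reduces to the observation that the leading coefficient in $t$ of each resultant is a power-product of the leading coefficients of $\Sum\prm(P)$ and $\Sum\prm(Q)$; these are nonzero elements of the field $\Codomain$ by hypothesis, so no collapse occurs. Once this is in place, the rest of the argument is standard commutative algebra.
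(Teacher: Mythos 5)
There is a genuine gap, and it sits exactly where the hypothesis of \emph{absolute} $\Sum$-algebraicity has to do its work. Your resultant construction (granting for the moment that it goes through) produces an annihilating polynomial $R_+(T)$ for $X+Y$ with $\Sum\prm(R_+)(t)$ nonzero of degree $mn$. But in this paper's framework that only shows $X+Y$ is not $\Sum\prm$-transcendental; it does not show $X+Y$ is $\Sum\prm$-algebraic. The scalar polynomial of $X+Y$ over $\Sum\prm$ is the image of an annihilating polynomial of \emph{minimal} image-degree, and it merely \emph{divides} $\Sum\prm(R_+)(t)$ (Proposition \ref{Proposition: If P(X) = 0 then the scalar polynomial divides Sum P(X)}); it could perfectly well be a nonzero constant, i.e.\ $X+Y$ could be $\Sum\prm$-infinite. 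This is not a hypothetical worry: in Example \ref{PickOne}, $Y$ and $Y\prm$ are both $\Add$-algebraic with scalar polynomial $t-2$, yet $Y+Y\prm = \frac{1}{1-\sigma}$ is $\Add$-infinite. Your argument, as written, uses only that $X$ and $Y$ are $\Sum\prm$-algebraic over each extension $\Sum\prm$, and would therefore ``prove'' that $Y+Y\prm$ is $\Add$-algebraic — a false conclusion. What is missing is a proof that $\Zeroes{X+Y}{\Sum\prm}$ is \emph{nonempty}. The paper gets this by a tower of extensions: extend $\Sum\prm$ to some $\Sum^{\prime\prime}$ summing $X$ to a root $x$ of its scalar polynomial, then use that $Y$ is absolutely $\Sum$-algebraic — hence still $\Sum^{\prime\prime}$-algebraic over this \emph{new, larger} summation — to extend again to $\Sum^{\prime\prime\prime}$ with $\Sum^{\prime\prime\prime}(Y)=y$, whence $x+y \in \Zeroes{X+Y}{\Sum\prm}$. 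That is the step where absoluteness is indispensable, and it has no counterpart in your proof. (Your upper bound $\Zeroes{X+Y}{\Sum\prm} \subseteq \set{x+y}$, giving finiteness, is fine and matches the paper's.)

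A secondary, fixable issue: the identity $\Sum\prm(\mathrm{Res}_S(P,Q)) = \mathrm{Res}_s(\Sum\prm(P),\Sum\prm(Q))$ requires that the formal degrees be preserved, i.e.\ that the leading coefficients of $P$ and $Q$ not be killed by $\Sum\prm$. For $\Sum\prm$-minimal polynomials this can fail — see Example \ref{Not absolutely Sum-algebraic}, where the $\Add$-minimal polynomial has degree $2$ but leading coefficient $\sigma - 1$, which sums to $0$. If both leading coefficients die, the specialized resultant is identically zero and you do not even get non-transcendence. Your remark about the leading coefficient of the resultant being a power-product of the leading coefficients of $\Sum\prm(P)$ and $\Sum\prm(Q)$ addresses the wrong comparison: those are leading coefficients of the \emph{images}, whereas the resultant is computed from the \emph{series-level} polynomials before $\Sum\prm$ is applied.
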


\begin{proof}	
	Clearly $0$ and $1$ are absolutely $\Sum$-algebraic series. Now let $X$ and $Y$ be any absolutely $\Sum$-algebraic series, and let $\Sum\prm \in \MSums \R \Codomain$ be any extension of $\Sum$. We consider the set $\Zeroes {X + Y} {\Sum\prm}$, defined as in (\ref{Characterizing Set for Sum-*** Series}). Let $x$ be a root of the scalar polynomial $\scalpoly X \prm (t)$ for $X$ over $\Sum\prm$; by Theorem \ref{Theorem: Summation-structure of Sum-*** Series}, $\Sum\prm$ has an extension $\Sum^{\prime \prime} \in \MSums \R \Codomain$ such that $\Sum^{\prime \prime} (X) = x$. Now as $Y$ is absolutely $\Sum$-algebraic, we may choose a root $y$ of the scalar polynomial $\scalpoly Y ^{\prime \prime} (t)$ for $Y$ over $\Sum^{\prime \prime}$; applying Theorem \ref{Theorem: Summation-structure of Sum-*** Series} again, we may choose a summation $\Sum^{\prime \prime \prime} \in \MSums \R \Codomain$ extending $\Sum^{\prime \prime}$ for which $\Sum^{\prime \prime \prime}(Y) = y$. Then 
	\[
		\Sum^{\prime \prime \prime}(X + Y) = x + y \in \Codomain,
	\]
	and so $\Zeroes {X + Y} {\Sum\prm}$ is nonempty. On the other hand, since every summation summing $X + Y$ may be extended to sum $X$ and $Y$ as well, it is clear that 
	\[
		\Zeroes {X + Y} {\Sum\prm} \subseteq \set{x + y \ : \ x \in \Zeroes X \Sum, y \in \Zeroes Y \Sum}.
	\]
	Thus, as both of these latter sets are finite, we see that $\Zeroes {X + Y} {\Sum\prm}$ is finite as well. Then by Corollary \ref{Corollary: Alternate Characterization for Sum-*** Series}, the series $X + Y$ is $\Sum\prm$-algebraic for every $\Sum\prm \in \MSums \R \Codomain$ extending $\Sum$, and so $X + Y$ is absolutely $\Sum$-algebraic. A completely similar argument shows that $XY$ is absolutely $\Sum$-algebraic.
\end{proof}

We also have the following proposition, which follows from Theorem \ref{Theorem: Summation-structure of Sum-*** Series}.

\begin{prop}\label{Proposition: The absolutely algebraic series are the intersection of all superdomains}
	We have
	\[
	\AbsAlg \Sum = \bigcap \set{\Domain\prm \subseteq \R[[\sigma]] \ : \ (\Domain, \Sum) \subseteq (\Domain\prm, \Sum\prm)}.
	\]
\end{prop}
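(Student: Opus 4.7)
The strategy is to prove both containments directly, relying on Theorem \ref{Theorem: Summation-structure of Sum-*** Series} — which determines the precise set of values a series can attain under multiplicative extensions of $\Sum$ — together with Proposition \ref{Proposition: Every Sum-*** Series is a Sum'-*** Series}, which controls how the classifications transcendental/algebraic/infinite propagate along extensions.

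For the forward containment $\AbsAlg \Sum \subseteq \bigcap \Domain\prm$, I would fix $X \in \AbsAlg \Sum$ together with an arbitrary multiplicative extension $(\Domain\prm, \Sum\prm)$ in the indexing set. Since $X$ is by definition $\Sum\prm$-algebraic, Theorem \ref{Theorem: Summation-structure of Sum-*** Series}(b) furnishes a larger multiplicative extension $\Sum^{\prime\prime}$ of $\Sum\prm$ having $X$ in its domain (take $\Sum\prm_{X, x}$ for any root $x$ of the scalar polynomial of $X$ over $\Sum\prm$). Taking $\Sum\prm$ to be maximal with respect to inclusion among multiplicative extensions of $\Sum$ — which is justified by Zorn's lemma applied to the obvious chain condition — forces $\Sum^{\prime\prime} = \Sum\prm$, and hence $X \in \Domain\prm$ already.

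For the reverse containment $\bigcap \Domain\prm \subseteq \AbsAlg \Sum$, I would argue by contrapositive: given $X \notin \AbsAlg \Sum$, I would exhibit a multiplicative extension of $\Sum$ whose domain omits $X$. If $X \in \Inf \Sum$, Theorem \ref{Theorem: Summation-structure of Sum-*** Series}(c) already gives that $X$ lies in no extension of $\Sum$ at all. If instead $X \in \Alg \Sum \setminus \AbsAlg \Sum$, then by Definition \ref{Definition: Absolutely Sum-algebraic} there is some extension $\Sum\prm$ with $X \notin \Alg {\Sum\prm}$; because $X \notin \Trans \Sum$, Proposition \ref{Proposition: Every Sum-*** Series is a Sum'-*** Series}(a) precludes $X \in \Trans {\Sum\prm}$, leaving $X \in \Inf {\Sum\prm}$, and then Theorem \ref{Theorem: Summation-structure of Sum-*** Series}(c) applied to $\Sum\prm$ completes the argument.

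The main obstacle I foresee is the residual case $X \in \Trans \Sum$, which is disjoint from $\AbsAlg \Sum$ by definition. Here I plan to adapt the idea of Example \ref{PickOne}: adjoining to $\Sum$ a companion series chosen so that its presence introduces a polynomial relation making $X$ itself $\Sum\prm$-infinite in the resulting extension, after which Theorem \ref{Theorem: Summation-structure of Sum-*** Series}(c) applied to $\Sum\prm$ finishes the contrapositive. Establishing that such a companion can always be produced for an arbitrary $\Sum$-transcendental $X$ is the most technical step I anticipate, but it should follow the pattern of $Y$ and $Y' = \tfrac{1}{1-\sigma} - Y$ in Example \ref{PickOne}, where the value of one forces the other out of every further extension.
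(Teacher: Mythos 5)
Your forward containment and the first two cases of your reverse containment are sound, with one caveat on the forward side: as literally written the indexing family contains $\Domain$ itself, so the intersection over \emph{all} extension domains is just $\Domain$, which is strictly smaller than $\AbsAlg \Sum$ in general (e.g.\ the series of Example \ref{PracAddZero}). The statement only makes sense if the intersection runs over \emph{maximal} extensions, equivalently if membership in the right-hand side means that every extension of $\Sum$ admits a further extension whose domain contains $X$; your pivot from ``arbitrary'' to ``maximal'' in the forward direction silently adopts this reading, which is the correct one. The genuine gap is the $\Sum$-transcendental case, which you rightly flag as the hard step but do not close, and the companion you propose does not obviously work. To use $Y\prm \coloneqq \frac{1}{1-\sigma} - X$ to force $X$ out, you must first place $Y\prm$ in the domain of some extension; by Theorem \ref{Theorem: Summation-structure of Sum-*** Series} this is possible precisely when $Y\prm$ is not $\Sum$-infinite, and nothing in your argument rules out $Y\prm \in \Inf \Sum$. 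The paper gives no control over the sum of a $\Sum$-infinite series and a $\Sum$-transcendental one (sums of two $\Sum$-infinite series can even land in $\Domain$, as $\frac{1}{1-\sigma} + \frac{-1}{1-\sigma} = 0$ shows), and in Example \ref{PickOne} the analogous verification succeeds only because the scalar polynomials of both $Y$ and $Y\prm$ are computed explicitly, which you cannot do for an arbitrary $X \in \Trans \Sum$.

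The gap closes if you use a multiplicative companion instead of an additive one. Replace $X$ by the tail-equivalent unit $U \coloneqq 1 - \sigma + \sigma^2 X$, which is $\Sum$-transcendental by Corollary \ref{Corollary: Tail-equivalence respects Sum-***}. Then $U\inv$ is also $\Sum$-transcendental: if $Q(U\inv) = 0$ with $\Sum(Q)(t) \neq 0$, the reflected polynomial $\reflected{Q}$ annihilates $U$ and still has nonzero image, contradicting the transcendence of $U$ (this is Proposition \ref{Proposition: Classification of inverses}\ref{Inverse of Sum-transcendental series}, whose proof does not rely on the present proposition, so there is no circularity). Theorem \ref{Theorem: Summation-structure of Sum-*** Series}\ref{Summing Sum-transcendental series} then yields the extension $\Sum\prm \coloneqq \Sum_{U\inv, 0}$, and over $\Sum\prm$ the polynomial $P(T) = U\inv \sigma^2 T + U\inv(1-\sigma) - 1$ satisfies $P(X) = U\inv U - 1 = 0$ while $\Sum\prm(P)(t) = 0 \cdot t + 0 - 1 = -1$, so $X$ is $\Sum\prm$-infinite and Theorem \ref{Theorem: Summation-structure of Sum-*** Series}\ref{Summing Sum-infinite series} excludes it from every further extension, in particular from some maximal one. (For comparison: the paper offers no written proof at all, asserting only that the proposition ``follows from'' the structure theorem, so the issue here is completeness rather than a divergence of routes.)
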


Theorem \ref{Theorem: Ring-structure of Absolutely Sum-Algebraic Series} and Proposition \ref{Proposition: The absolutely algebraic series are the intersection of all superdomains} both suggest that the absolutely $\Sum$-algebraic series are worthy of further study. Definition \ref{Definition: Sum-*** series} classifies a series by means of its scalar polynomial as either $\Sum$-transcendental, $\Sum$-algebraic, and $\Sum$-infinite. Determining whether a $\Sum$-algebraic series is absolutely $\Sum$-algebraic is a much subtler affair. We will require several intermediate lemmas before we can provide our answer (Theorem \ref{Theorem: Characterizations of absolutely Sum-algebraic series}). 

\begin{lemma}\label{Lemma: Existence of generic minimal polynomials for sums and products}
	Let $m$ and $n$ be positive integers, and let $\alpha_0, \ldots, \alpha_{m-1}, \beta_0, \ldots, \beta_{n-1}$ be indeterminates over $\bbZ$. There are unique polynomials
	\[
		P_{m,n}(T; \alpha_0, \ldots, \alpha_{m-1}, \beta_0, \ldots, \beta_{n-1}) \in \bbZ[\alpha_0, \ldots, \alpha_{m-1}, \beta_0, \ldots, \beta_{n-1}][T]
	\]
	 and 
	 \[
		 Q_{m,n}(T; \alpha_0, \ldots, \alpha_{m-1}, \beta_0, \ldots, \beta_{n-1}) \in \bbZ[\alpha_0, \ldots, \alpha_{m-1}, \beta_0, \ldots, \beta_{n-1}][T]
	 \] 
	 satisfying the following pair of conditions:
	\begin{enumerate}
		\item $P_{m,n}(T; \alpha_0, \ldots, \alpha_{m-1}, \beta_0, \ldots, \beta_{n-1})$ and $Q_{m,n}(T; \alpha_0, \ldots, \alpha_{m-1}, \beta_0, \ldots, \beta_{n-1})$ are monic of degree $mn$ as polynomials in $T$.\label{Monicity and degree}
		\item For any commutative ring $\R\prm$ and any elements $X, Y, A_0, \ldots, A_{m-1}, \beta_0, \ldots, \beta_{n-1} \in \R\prm$ such that \label{Universal minimal polynomials}
		\begin{align*}
			X^m + \sum\limits_{k < m} A_k X^k = Y^n + \sum\limits_{k < n} B_k Y^k &= 0,
		\end{align*}
		we have \begin{align*}
			P_{m, n}(X + Y; A_0, \ldots, A_{m-1}, B_0, B_{n-1}) &= 0 \\
			\intertext{ and }
			Q_{m, n}(XY; A_0, \ldots, A_{m-1}, B_0, B_{n-1}) &= 0
		\end{align*}
		as elements of $R$.
	\end{enumerate}
\end{lemma}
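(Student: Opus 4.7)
The plan is to construct $P_{m,n}$ and $Q_{m,n}$ explicitly in a universal splitting ring and then verify both the universal vanishing property and uniqueness by lifting to that ring.

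First I would introduce $R_0 \coloneqq \bbZ[\alpha_0,\ldots,\alpha_{m-1},\beta_0,\ldots,\beta_{n-1}]$ and define its extension $R_1 \coloneqq \bbZ[X_1,\ldots,X_m,Y_1,\ldots,Y_n]$, viewed as an $R_0$-algebra via $\alpha_k \mapsto (-1)^{m-k} e_{m-k}(X_1,\ldots,X_m)$ and $\beta_k \mapsto (-1)^{n-k} e_{n-k}(Y_1,\ldots,Y_n)$. Under this map, the generic polynomials factor as $f(U) \coloneqq U^m + \sum_{k<m}\alpha_k U^k = \prod_{i=1}^m (U - X_i)$ and $g(U) \coloneqq U^n + \sum_{k<n}\beta_k U^k = \prod_{j=1}^n (U - Y_j)$ in $R_1[U]$. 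I would then define
\[
P_{m,n}(T) \coloneqq \prod_{i,j} (T - X_i - Y_j), \qquad Q_{m,n}(T) \coloneqq \prod_{i,j} (T - X_i Y_j)
\]
in $R_1[T]$. The coefficients of each, as polynomials in $T$, are separately symmetric in $X_1,\ldots,X_m$ and in $Y_1,\ldots,Y_n$, so by the fundamental theorem of symmetric polynomials they lie in the image of $R_0$. Thus $P_{m,n}, Q_{m,n} \in R_0[T]$, and they are manifestly monic of degree $mn$ in $T$, giving condition \eqref{Monicity and degree}.

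For the universal property \eqref{Universal minimal polynomials}, given a commutative ring $\R'$ and elements $X, Y, A_0, \ldots, A_{m-1}, B_0, \ldots, B_{n-1}$ satisfying the stated relations, I would build a splitting extension $\R' \hookrightarrow \R''$ as follows. Because $f_{\R'}(X) = 0$ with $f_{\R'}$ monic, $(U-X)$ divides $f_{\R'}(U)$, yielding a monic cofactor of degree $m-1$; iteratively adjoining a root of the cofactor via the free module extension $S \mapsto S[U]/(h(U))$ for $h$ monic (which is known to be free of rank $\deg h$, hence faithfully flat and in particular injective), I obtain an extension of $\R'$ over which $f$ factors as $(U-X)(U-x_2)\cdots(U-x_m)$. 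Repeating for $g$ yields an injective extension $\R' \hookrightarrow \R''$ with $f, g$ both split in $\R''[U]$ and $X, Y$ among the named roots. Then there is an induced $R_0$-algebra map $R_1 \to \R''$ sending $X_1 \mapsto X$, $Y_1 \mapsto Y$, and the remaining indeterminates to the other roots. Under this map, the image of $P_{m,n}(X_1+Y_1)$ is $P_{m,n}(X+Y; A, B)$; but the $(i,j)=(1,1)$ factor of $P_{m,n}(X_1+Y_1) = \prod_{i,j}(X_1+Y_1 - X_i - Y_j)$ vanishes in $R_1$, so the product vanishes. Hence $P_{m,n}(X+Y; A, B) = 0$ in $\R''$, and by injectivity of $\R' \hookrightarrow \R''$, in $\R'$. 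The same argument applies to $Q_{m,n}$.

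For uniqueness, if $P$ and $P'$ both satisfy the conclusions, then $D \coloneqq P - P'$ has degree strictly less than $mn$ in $T$ and vanishes under the universal specialization at $T = X_i + Y_j$ in $R_1$ for every $i, j$. Since $R_1$ is an integral domain and the $mn$ elements $X_i + Y_j$ are pairwise distinct (as $X_i + Y_j = X_k + Y_\ell$ would force $X_i - X_k = Y_\ell - Y_j$, impossible in a polynomial ring unless $i=k, j=\ell$), a polynomial of $T$-degree less than $mn$ with $mn$ distinct roots in $R_1$ must vanish; then injectivity of $R_0 \hookrightarrow R_1$ forces $D = 0$. The analogous argument for $Q_{m,n}$ uses distinctness of the products $X_i Y_j$, which holds in $R_1$ for the same generic reason. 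The main technical hurdle is the splitting-ring construction in step two: the conclusion $P_{m,n}(X+Y;A,B)=0$ only holds a priori in $\R''$, and I must ensure $\R' \hookrightarrow \R''$ remains injective under iterated adjunction of roots of arbitrary monic polynomials, which is exactly the freeness of $S[U]/(h(U))$ as an $S$-module for $h$ monic.
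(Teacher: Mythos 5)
Your proof is correct, and its core construction---defining $P_{m,n}$ and $Q_{m,n}$ as $\prod_{i,j}(T-X_i-Y_j)$ and $\prod_{i,j}(T-X_iY_j)$ over generic roots and descending the coefficients to $\bbZ[\alpha_0,\ldots,\beta_{n-1}]$ via the fundamental theorem of symmetric polynomials---matches the paper's. You diverge in two places. For the specialization step, the paper forms the quotient ${\rm S} = \R[\chi,\upsilon]/(f(\chi),g(\upsilon))$ and uses its universal property to obtain a homomorphism ${\rm S}\to\R'$ directly, with no need to factor anything over $\R'$; you instead build a splitting extension $\R'\hookrightarrow\R''$ by iterated adjunction of roots of monic polynomials and map the full generic splitting ring $R_1$ into $\R''$. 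Your route costs you the injectivity check for $\R'\hookrightarrow\R''$ (which you handle correctly via freeness of $S[U]/(h(U))$ for monic $h$), but it keeps the entire argument inside polynomial rings and makes the vanishing of the $(1,1)$ factor transparent. For uniqueness, the paper appeals to uniqueness of monic minimal polynomials in an integral domain together with the uniqueness of objects satisfying universal properties; your argument---that the difference of two candidates has $T$-degree less than $mn$ yet vanishes at the $mn$ pairwise distinct elements $X_i+Y_j$ (resp.\ $X_iY_j$) of the domain $R_1$, combined with injectivity of $R_0\hookrightarrow R_1$---is more elementary and arguably tighter, since it shows directly that conditions (1) and (2) alone pin the polynomials down.
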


\begin{proof}
	Let $\R = \bbZ[\alpha_0, \ldots, \alpha_{m-1}, \beta_0, \ldots, \beta_{n-1}]$ where $\alpha_i$ and $\beta_j$ are indeterminate, let 
	\begin{align*}
	f(T) &\coloneqq T^m + \sum\limits_{k < m} \alpha_k T^k, \ \text{and let} \\
	g(T) &\coloneqq T^n + \sum\limits_{k < n} \beta_k T^k.
	\end{align*}
	Now take ${\rm S} \coloneqq \R[\chi, \upsilon] / (f(\chi), g(\upsilon))$; ${\rm S}$ is an integral domain. We let $P_{m,n}(T)$ be the minimal polynomial for $\chi + \upsilon$ over $\R$, and let $Q_{m, n}(T)$ be the minimal polynomial for $\chi \upsilon$ over $R$. More explicitly, let ${\rm S}\prm$ be the splitting field for $f(T)$ and $g(T)$ over ${\rm S}$, let $\chi \eqqcolon \chi_1, \chi_2, \ldots, \chi_m$ be the roots of $f(T)$, and let $\upsilon \eqqcolon \upsilon_1, \upsilon_2, \ldots, \upsilon_n$ be the roots of $g(T)$. Then
	\begin{align*}
		P_{m,n}(T; \alpha_0, \ldots, \alpha_{m-1}, \beta_0, \ldots, \beta_{n-1}) &= \prod\limits_{i = 1}^m \prod\limits_{j = 1}^n (T - \chi_i - \upsilon_j), \text{ and } \\
		Q_{m, n}(T; \alpha_0, \ldots, \alpha_{m-1}, \beta_0, \ldots, \beta_{n-1}) &= \prod\limits_{i = 1}^m \prod\limits_{j = 1}^n (T - \chi_i \upsilon_j).
	\end{align*} 
	This shows that $P_{m, n}(T)$ and $Q_{m, n}(T)$	are both of degree $mn$ in $T$.
	
	Now by construction, ${\rm S}$ satisfies the following universal property: for any commutative ring $\R\prm$ that contains elements $X, Y, A_0, \dots, A_{m-1}, B_0, \dots, B_{n-1}$ satisfying condition \eqref{Universal minimal polynomials} of Lemma \ref{Lemma: Existence of generic minimal polynomials for sums and products}, there is a unique homomorphism $\varphi : {\rm S} \to \R\prm$ such that
	\[
	\varphi : \upsilon \mapsto X, \chi \mapsto Y, \alpha_i \mapsto A_i, \beta_j \mapsto B_j.
	\]
	Now as $\varphi(P_{m, n}(\chi + \upsilon)) = 0$ and $\varphi(Q_{m, n}(\chi \upsilon) = 0$, the polynomials $P_{m, n}$ and $Q_{m, n}$ satisfy properties \eqref{Monicity and degree} and \eqref{Universal minimal polynomials} of Lemma \ref{Lemma: Existence of generic minimal polynomials for sums and products}. On the other hand, as monic minimal polynomials in an integral domain are unique, and by the uniqueness of commutative rings satisfying universal properties, $P_{m, n}$ and $Q_{m, n}$ are the only polynomials with the asserted properties.
\end{proof}

The polynomials $P_{m,n}(T; \alpha_0, \ldots, \alpha_{m-1}, \beta_0, \ldots, \beta_{n-1})$ and $Q_{m,n}(T; \alpha_0, \ldots, \alpha_{m-1}, \beta_0, \ldots, \beta_{n-1})$ are effectively computable \cite{MacDuffee}[Section 59].

\begin{lemma}\label{Lemma: generic minimal polynomials evaluate to t^mn at zero}
	Let $P_{m,n}(T; \alpha_0, \ldots, \alpha_{m-1}, \beta_0, \ldots, \beta_{n-1})$ and $Q_{m,n}(T; \alpha_0, \ldots, \alpha_{m-1}, \beta_0, \ldots, \beta_{n-1})$ be as in Lemma \ref{Lemma: Existence of generic minimal polynomials for sums and products}. Then 
	\[
		P_{m, n}(T; 0, \ldots, 0) = Q_{m, n}(T; 0, \ldots, 0) = T^{mn}.
	\]
\end{lemma}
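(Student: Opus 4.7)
The plan is to specialize the explicit product formulas for $P_{m,n}$ and $Q_{m,n}$ developed in the proof of Lemma \ref{Lemma: Existence of generic minimal polynomials for sums and products}. Working in a splitting field over the fraction field of $\bbZ[\alpha_0, \ldots, \beta_{n-1}]$, one has
\[
P_{m,n}(T; \alpha, \beta) = \prod_{i=1}^m \prod_{j=1}^n (T - \chi_i - \upsilon_j), \qquad Q_{m,n}(T; \alpha, \beta) = \prod_{i=1}^m \prod_{j=1}^n (T - \chi_i \upsilon_j),
\]
where $\chi_1, \ldots, \chi_m$ and $\upsilon_1, \ldots, \upsilon_n$ are the roots of $f(T) = T^m + \sum_{k < m} \alpha_k T^k$ and $g(T) = T^n + \sum_{k < n} \beta_k T^k$, respectively. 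By symmetry in each set of roots separately combined with Vieta's formulas, every coefficient of these polynomials in $T$ is a particular element of $\bbZ[\alpha_0, \ldots, \beta_{n-1}]$; the evaluation $\alpha = \beta = 0$ is therefore a well-defined ring homomorphism $\bbZ[\alpha_0, \ldots, \beta_{n-1}][T] \to \bbZ[T]$.

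Under this specialization $f(T)$ becomes $T^m$ and $g(T)$ becomes $T^n$; each has $0$ as its unique root in any ring extension, so all elementary symmetric functions in the $\chi_i$ and in the $\upsilon_j$ of positive degree vanish at $\alpha = \beta = 0$. Every non-leading coefficient of $P_{m, n}(T; 0, \ldots, 0)$ (respectively, $Q_{m,n}(T; 0, \ldots, 0)$) is, by construction, a polynomial of positive total degree in these elementary symmetric functions, and so must vanish under the specialization. By part (\ref{Monicity and degree}) of Lemma \ref{Lemma: Existence of generic minimal polynomials for sums and products} the leading coefficient is $1$. Hence both polynomials equal $T^{mn}$.

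The main obstacle --- a minor but necessary one --- is justifying the interchange of \emph{first take roots, then specialize the coefficients} with \emph{first specialize, then take roots}. This is routine because each coefficient of $P_{m,n}$ and $Q_{m,n}$ is a \emph{fixed} polynomial in $\bbZ[\alpha, \beta]$, so its value at $\alpha = \beta = 0$ is unambiguous; and that value coincides with the corresponding elementary symmetric function evaluated at $mn$ copies of $0$, by the universal nature of the Vieta--symmetric-function identities. Alternatively, one can bypass this bookkeeping entirely by invoking the universal property in condition (\ref{Universal minimal polynomials}) of Lemma \ref{Lemma: Existence of generic minimal polynomials for sums and products} with $\R\prm = \bbZ[\chi, \upsilon]/(\chi^m, \upsilon^n)$ and $A_k = B_k = 0$, deducing that $P_{m,n}(T; 0, \ldots, 0)$ annihilates $\chi + \upsilon$ and combining this with an identification of its $mn$ roots via the product expansion.
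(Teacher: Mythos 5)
Your argument is correct and takes essentially the same route as the paper's proof: both specialize the explicit product formulas $\prod_{i,j}(T-\chi_i-\upsilon_j)$ and $\prod_{i,j}(T-\chi_i\upsilon_j)$ at $\alpha=\beta=0$, using the symmetric-function/Vieta correspondence to identify the specialization of the coefficients $\alpha_k,\beta_k$ to zero with the specialization of the roots $\chi_i,\upsilon_j$ to zero. The alternative you sketch at the end via the universal property is unnecessary, and as you note would by itself only show that $\chi+\upsilon$ is annihilated, not pin down the polynomial.
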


\begin{proof}
	We follow the notation in the proof of Lemma \ref{Lemma: Existence of generic minimal polynomials for sums and products}. Note that $\chi_1, \ldots, \chi_m, \upsilon_1, \ldots, \upsilon_n$ are algebraically independent over $\bbZ$, and so by the universal mapping property of polynomial rings, there exists a map $\varphi : {\rm S}\prm \to \bbZ$ with $X_i \mapsto 0$ for all $1 \leq i \leq m$, and $Y_j \mapsto 0$ for all $1 \leq j \leq n$. But by the theory of symmetric polynomials, we see that $X_1, \ldots, X_m \mapsto 0$ if and only if $\alpha_0, \ldots, \alpha_{m - 1} \mapsto 0$, and $\upsilon_1, \ldots, \upsilon_n \mapsto 0$ if and only if $\beta_0, \ldots, \beta_{n-1} \mapsto 0$. Then 
	\begin{align*}
		P_{m,n}(T; 0, \ldots, 0) &= \prod\limits_{i = 1}^m \prod\limits_{j = 1}^n (T - 0 - 0) = T^{mn}, \ \text{and} \\
		Q_{m, n}(T; 0, \ldots, 0) &= \prod\limits_{i = 1}^m \prod\limits_{j = 1}^n (T - 0 \cdot 0) = T^{mn},
	\end{align*} 
	as desired.
\end{proof}

If $X$ and $Y$ are $\Sum$-infinite, then intuitively, it makes sense for their product to be $\Sum$-infinite as well. This is true (Corollary \ref{Corollary: a product of infinite series is infinite}), but we first prove a weaker claim.

\begin{lemma}\label{Lemma: product of infinite series are infinite or zero}
	If $X$ and $Y$ are $\Sum$-infinite, then $\scalpoly{XY} (T) = t^\ell$ for some $\ell \geq 0$. Equivalently, $\Zeroes {XY} \Sum \subseteq \set 0$.
\end{lemma}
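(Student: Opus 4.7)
The plan is to manufacture a polynomial $R(T) \in \Domain[T]$ with $R(XY) = 0$ whose image $\Sum(R)(t)$ is a nonzero constant in $\Codomain$; once $R$ is in hand, Proposition \ref{Proposition: If P(X) = 0 then the scalar polynomial divides Sum P(X)} forces $\scalpoly{XY}(t)$ to be a monic divisor of this constant, hence to equal $1 = t^0$, giving the conclusion with $\ell = 0$. The natural way to produce such an $R$ is via an elimination-theoretic construction (a resultant) applied to the $\Sum$-minimal polynomials for $X$ and $Y$.

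First I would fix a $\Sum$-minimal polynomial $P(T) = \sum_{k=0}^m P_k T^k$ for $X$ and a $\Sum$-minimal polynomial $Q(T) = \sum_{k=0}^n Q_k T^k$ for $Y$. Because $X, Y \in \Inf\Sum$, we have $\Sum(P_0) = p \in \Codomain^\times$ and $\Sum(P_k) = 0$ for $k \geq 1$, and similarly $\Sum(Q_0) = q \in \Codomain^\times$ and $\Sum(Q_k) = 0$ for $k \geq 1$; in particular $m, n \geq 1$, since $P(X)=0$ with $m=0$ would force $P_0=0$ and hence $\Sum(P_0)=0$. I would then introduce the homogenization $Q^*(T,U) \coloneqq \sum_{k=0}^n Q_k T^k U^{n-k} \in \Domain[T,U]$ and set
\[
R(T) \coloneqq \operatorname{Res}_U\bigl(P(U),\, Q^*(T,U)\bigr) \in \Domain[T].
\]
The relation $R(XY)=0$ then follows from the standard ideal-membership property of resultants: writing $R(T) = A(T,U)P(U) + B(T,U)Q^*(T,U)$ in $\Domain[T,U]$, substituting $U = X$ kills the first term via $P(X)=0$, and setting $T = XY$ in $Q^*(T,X) = \sum_k Q_k T^k X^{n-k}$ yields $X^n Q(Y) = 0$.

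The crux is to compute $\Sum(R)(t)$. Since the resultant is the determinant of the Sylvester matrix (and thus a polynomial expression with integer coefficients in the coefficients of $P(U)$ and $Q^*(T,U)$ regarded as polynomials in $U$), and since $\Sum$ is an $\R$-algebra homomorphism, we have
\[
\Sum(R)(t) = \operatorname{Res}_u\bigl(\Sum(P)(u),\, \Sum(Q^*)(t,u)\bigr) = \operatorname{Res}_u\bigl(p,\, q u^n\bigr),
\]
the last equality because $\Sum(Q^*)(t,u) = \sum_k \Sum(Q_k)\, t^k u^{n-k} = q u^n$. Viewing $p$ as a degree-$m$ polynomial in $u$ (coefficient vector $(0,\ldots,0,p)$ of length $m+1$) and $q u^n$ as a degree-$n$ polynomial in $u$ (coefficient vector $(q,0,\ldots,0)$ of length $n+1$), the corresponding $(m+n)\times(m+n)$ Sylvester matrix takes the block antidiagonal form $\begin{pmatrix} 0 & p I_n \\ q I_m & 0 \end{pmatrix}$, whose determinant is $(-1)^{mn} p^n q^m \in \Codomain^\times$.

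Thus $R \in {\rm A}(XY;\Sum)$ with $\Sum(R)(t)$ of degree $0$, so by Proposition \ref{Proposition: If P(X) = 0 then the scalar polynomial divides Sum P(X)} the scalar polynomial $\scalpoly{XY}(t)$ must be a monic divisor of the nonzero constant $(-1)^{mn} p^n q^m$, and therefore equals $1 = t^0$. This yields the required $\ell = 0$; in fact it shows directly that $XY \in \Inf\Sum$, anticipating Corollary \ref{Corollary: a product of infinite series is infinite}. The main technical obstacle in writing out the proof is the Sylvester-matrix computation together with the verification that the resultant commutes with $\Sum$; both are routine once one notes that only a single permutation (the block swap, of sign $(-1)^{mn}$) contributes to the determinant, and that the positivity of $m$ and $n$ ensures the Sylvester matrix is well-defined.
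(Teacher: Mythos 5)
Your proof is correct, and it takes a genuinely different route from the paper's. The paper reduces to the case where $X$ and $Y$ are units via tail-equivalence, passes to the reflected polynomials $\reflected{P}$, $\reflected{Q}$ to get monic annihilators of $X\inv$ and $Y\inv$, invokes the generic product-polynomial $Q_{m,n}$ of Lemma \ref{Lemma: Existence of generic minimal polynomials for sums and products} together with Lemma \ref{Lemma: generic minimal polynomials evaluate to t^mn at zero} to conclude $\scalpoly{X\inv Y\inv}(t)$ divides $t^{mn}$, and then has to run a separate contradiction argument through a rationally closed extension to transfer the conclusion back to $XY$ — and even then obtains only $\scalpoly{XY}(t) = t^\ell$ with $\ell$ possibly positive. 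Your resultant construction works directly with $XY$, needs no reduction to units and no inverses, and the key computation $\Sum(R)(t) = \pm\, p^n q^m \in \Codomain^\times$ is sound: the ideal-membership identity $\operatorname{Res}_U(f,g) = Af + Bg$ holds over an arbitrary commutative ring with fixed formal degrees, the homomorphism $\Sum$ commutes with the Sylvester determinant, and your block-permutation evaluation of that determinant is correct (the observation that $m, n \geq 1$, so the matrix is nonempty, is the right thing to check). By Proposition \ref{Proposition: If P(X) = 0 then the scalar polynomial divides Sum P(X)} this forces $\scalpoly{XY}(t) = 1$, i.e.\ $XY \in \Inf\Sum$. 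Note that this is strictly stronger than the lemma as stated: you have in fact given a direct, self-contained proof of Corollary \ref{Corollary: a product of infinite series is infinite}, which the paper only obtains later as a consequence of Proposition \ref{Proposition: Classification of inverses} and the univalent extension machinery of Theorem \ref{Theorem: Univalent Extension}. What your approach buys is this short-circuiting of the logical chain; what the paper's approach buys is that the same $P_{m,n}$, $Q_{m,n}$ machinery is reused elsewhere (e.g.\ in Theorem \ref{Theorem: Ring-structure of Absolutely Sum-Algebraic Series}), so the authors get the weak lemma essentially for free from tools already on the table.
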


\begin{proof}
	Let $X$ and $Y$ be $\Sum$-infinite. By Corollary \ref{Corollary: Tail-equivalence respects Sum-***}, we may assume without loss of generality that $X$ and $Y$ are units. Now let $P(T) = \sum\limits_{k = 0}^m P_k T^k$ be a $\Sum$-minimal polynomial for $X$, and let $Q(T) = \sum\limits_{k = 0}^n Q_k T^k$ be a $\Sum$-minimal polynomial for $Y$. Suppose first that $P_0 = Q_0 = 1$. In this case, we see that $\reflected{P}(T)$ and $\reflected{Q}(T)$ are monic, and $\Sum (\reflected{P})(t) = t^m$ and $\Sum (\reflected{Q}) (t) = t^n$. By Corollary \ref{Corollary: scalar polynomial of inverse divides reflected polynomial} and Lemma \ref{Lemma: Existence of generic minimal polynomials for sums and products}, we conclude $X\inv Y\inv$ is a root of $Q_{m, n}(T; P_m, P_{m-1} \ldots, P_1, Q_n, Q_{n-1}, \ldots, Q_1)$. Moreover, since $\Sum (P_k) = 0$ for $k < m$ and $\Sum Q_k = 0$ for $k < n$, we may apply Lemma \ref{Lemma: generic minimal polynomials evaluate to t^mn at zero} to conclude
	\[
		\Sum (Q_{m, n} (t;  P_m, P_{m-1} \ldots, P_1, Q_n, Q_{n-1}, \ldots, Q_0)) =  Q_{m, n} (t; 0, \ldots, 0) = t^{mn}.
	\]
	Then by Proposition \ref{Proposition: If P(X) = 0 then the scalar polynomial divides Sum P(X)}, we see $\scalpoly {X\inv Y\inv} (t)$ divides $t^{mn}$, and so $\scalpoly {X\inv Y\inv}$ is a power of $t$. Thus the sum of a $\Sum$-minimal polynomial for $X\inv Y\inv$ has exactly one nonzero term, and so the sum of its reflected polynomial will also only have one nonzero term. By Corollary \ref{Corollary: scalar polynomial of inverse divides reflected polynomial}, we conclude that the scalar polynomial for $XY$ also has only one nonzero term, and the lemma follows in this case.
	
	We now relax the condition that $P_0 = Q_0 = 1$. In any case, $P_0 Q_0 X\inv Y\inv$ is a root of the monic polynomial 
	\begin{align*}
		Q_{m,n}&(T; P_0^{m-1} P_m, P_0^{m-2} P_{m-1} \ldots, P_1, Q_0^{n-1} Q_n, Q_0^{n-2} Q_{n-1}, \ldots, Q_0),
		\intertext{and}
		\Sum (Q_{m,n}&(t; P_0^{m-1} P_m, P_0^{m-2} P_{m-1} \ldots, P_1, Q_0^{n-1} Q_n, Q_0^{n-2} Q_{n-1}, \ldots, Q_0)) = t^{mn}.
	\end{align*}
	As $X$ and $Y$ are $\Sum$-infinite, we must have $p \coloneqq \Sum (P_0) \neq 0$ and $q \coloneqq \Sum (Q_0) \neq 0$. Thus $\Zeroes {P_0 Q_0 X\inv Y\inv} \Sum \subseteq \set{0}$. Now suppose by way of contradiction that $\Zeroes {XY} \Sum \not\subseteq \set{0}$, and choose $a \neq 0$ an element of $\Zeroes {XY} \Sum$. By Theorem \ref{Theorem: Summation-structure of Sum-*** Series} and Theorem \ref{Theorem: Rational Extension}, we may choose a rationally closed extension $\Sum\prm$ of $\Sum$ with $\Sum (XY) = a$. As $\Sum\prm$ is rationally closed, we must have $\Sum\prm (X\inv Y\inv) = \frac{1}{a}$, and by multiplicativity $\Sum\prm (P_0 Q_0 X\inv Y\inv) = \frac{pq}{a} \neq 0$. Then by Corollary \ref{Corollary: Alternate Characterization for Sum-*** Series}, we have 
	\[
		0 \neq \frac{pq}{a} \in \Zeroes {P_0 Q_0 X\inv Y\inv} {\Sum\prm} \subseteq \Zeroes {P_0 Q_0 X\inv Y\inv} \Sum \subseteq \set{0},
	\] 
	which is a contradiction.
\end{proof}

\begin{defn}\label{Definition: Practically Zero}
	A series $X$ is \defi{practically $\Sum$-zero} if it is absolutely $\Sum$-algebraic and $\scalpoly X (t) = t^m$ for some $m > 0$.
\end{defn}

By Theorem \ref{Theorem: Summation-structure of Sum-*** Series}, Corollary \ref{Corollary: Scalar polynomials lose factors under extensions}, and Definition \ref{Definition: Absolutely Sum-algebraic}, a series $X$ is practically $\Sum$-zero if and only if every summation $\Sum\prm$ extending $\Sum$ has an extension $\Sum^{\prime \prime}$ such that $\Sum^{\prime \prime} (X) = 0$. See also Definitions \ref{Definition: Sum-univalence} and \ref{Definition: Univalent Extension} below.

\begin{lemma}\label{Lemma: Invertible Sum-infinite series}
	Let $U$ be a unit of $\R[[\sigma]]$. Then $U$ is $\Sum$-infinite if and only $U\inv$ is practically $\Sum$-zero.
\end{lemma}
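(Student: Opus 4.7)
The plan is to prove both directions by combining the reflected-polynomial trick of Corollary \ref{Corollary: scalar polynomial of inverse divides reflected polynomial} with the observation that $U \cdot U\inv = 1$ sums to $1$ in every multiplicative extension of $\Sum$.

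For the forward direction, suppose $U$ is $\Sum$-infinite and fix a $\Sum$-minimal polynomial $P(T) = \sum_{k=0}^n P_k T^k$ for $U$, so that $\Sum(P)(t)$ is a nonzero constant $p$; this forces $\Sum(P_0) = p$ and $\Sum(P_k) = 0$ for $k \geq 1$. A short computation then gives $\Sum(\reflected{P})(t) = p\, t^n$, and Corollary \ref{Corollary: scalar polynomial of inverse divides reflected polynomial} yields $\scalpoly{U\inv}(t) = t^{\ell}$ for some $\ell \geq 0$. To exclude $\ell = 0$, I would invoke Lemma \ref{Lemma: product of infinite series are infinite or zero}: if $U\inv$ were $\Sum$-infinite then $\scalpoly{1}(t)$ would be a power of $t$, contradicting the fact that $T - 1$ is $\Sum$-minimal for $1$ and therefore $\scalpoly{1}(t) = t - 1$. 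To upgrade to \emph{absolute} $\Sum$-algebraicity, I would simply rerun the same argument over an arbitrary multiplicative extension $\Sum\prm \supseteq \Sum$: since $P \in \Domain\prm[T]$ and $\Sum\prm$ restricts to $\Sum$ on $\Domain$, the value $\Sum\prm(\reflected{P})(t) = p\, t^n$ is unchanged, and the conclusion $\scalpoly{U\inv}\prm(t) = t^{\ell\prm}$ with $\ell\prm \geq 1$ carries over verbatim. Hence $U\inv \in \AbsAlg{\Sum}$ with $\scalpoly{U\inv}(t) = t^\ell$, $\ell \geq 1$; that is, $U\inv$ is practically $\Sum$-zero.

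For the converse, assume $U\inv$ is practically $\Sum$-zero and, for contradiction, that $U$ is not $\Sum$-infinite, so $U$ is either $\Sum$-transcendental or $\Sum$-algebraic. Using algebraic closure of $\Codomain$ to secure a root of $\scalpoly{U}(t)$ in the algebraic case, Theorem \ref{Theorem: Summation-structure of Sum-*** Series} delivers an extension $\Sum\prm$ with $\Sum\prm(U) = u$ for some $u \in \Codomain$. Because $U\inv \in \AbsAlg{\Sum}$, Proposition \ref{Proposition: Every Sum-*** Series is a Sum'-*** Series} gives $U\inv \in \AbsAlg{\Sum\prm}$, while Corollary \ref{Corollary: Scalar polynomials lose factors under extensions} forces $\scalpoly{U\inv}\prm(t)$ to divide $t^m$; being nonconstant, it is itself a positive power of $t$, so $0$ is the only possible sum of $U\inv$ in a further extension. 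Theorem \ref{Theorem: Summation-structure of Sum-*** Series} then supplies $\Sum^{\prime \prime} \supseteq \Sum\prm$ with $\Sum^{\prime \prime}(U\inv) = 0$, whence multiplicativity yields $1 = \Sum^{\prime \prime}(1) = u \cdot 0 = 0$, the desired contradiction.

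The main obstacle is bookkeeping with the ``absolute'' qualifier on both sides --- ensuring in the forward direction that the reflected-polynomial bound survives passage to any extension, and in the reverse direction that two independent extensions (one summing $U$, one summing $U\inv$) can be chained into a single multiplicative summation. Both points reduce to routine applications of Theorem \ref{Theorem: Summation-structure of Sum-*** Series}, so the genuine content of the lemma collapses to the small but essential observation that $\scalpoly{1}(t) = t - 1$ is not a power of $t$.
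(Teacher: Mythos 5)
Your proof is correct and follows essentially the same route as the paper's: the forward direction via Corollary \ref{Corollary: scalar polynomial of inverse divides reflected polynomial} together with Lemma \ref{Lemma: product of infinite series are infinite or zero} applied to $U \cdot U\inv = 1$, and the converse by chaining two extensions and using multiplicativity to derive $1 = 0$. The only cosmetic difference is that you establish the absolute $\Sum$-algebraicity of $U\inv$ by rerunning the reflected-polynomial argument over every extension, whereas the paper argues by contradiction from a hypothetical extension making $U\inv$ infinite; the two are interchangeable.
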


\begin{proof}
	Suppose first that $U$ is $\Sum$-infinite, and let $P(T) = \sum\limits_{k = 0}^m P_k T^k$ be a $\Sum$-minimal polynomial for $U$ with $P_m \neq 0$. By Corollary \ref{Corollary: scalar polynomial of inverse divides reflected polynomial}, $\scalpoly {U\inv} (t)$ divides $\Sum (\reflected{P}) (t)$. But $\Sum (\reflected{P}) (t) = \Sum (P_0) t^m$, and so $U\inv$ is either $\Sum$-infinite or $\Sum$-algebraic with zero as its only root. Now suppose by way of contradiction that $U\inv$ is not practically $\Sum$-zero. Then there is some extension $\Sum\prm$ of $\Sum$ for which $U\inv$ is $\Sum\prm$-infinite. Now by Lemma \ref{Lemma: product of infinite series are infinite or zero}, we see that $U \cdot U\inv = 1$ has a scalar polynomial over $\Sum\prm$ of the form $t^m$ for some $m$, which is absurd. Thus $U\inv$ is practically $\Sum$-zero as desired.
		
	On the other hand, suppose that $U\inv$ is practically $\Sum$-zero, and suppose by way of contradiction that $U$ is not $\Sum$-infinite. Then $U$ is $\Sum$-algebraic or $\Sum$-transcendental, and so by Theorem \ref{Theorem: Summation-structure of Sum-*** Series}, we may find $x \in \Codomain$ and $\Sum\prm$ an extension of $\Sum$ such that $\Sum\prm (U) = x$. On the other hand, as $U\inv$ is practically $\Sum$-zero, there is an extension $\Sum^{\prime \prime}$ of $\Sum\prm$ such that $\Sum^{\prime \prime} (U\inv) = 0$. Then 
	\[
		1 = \Sum^{\prime \prime} (U \cdot U\inv) = \Sum^{\prime \prime} (U) \cdot \Sum^{\prime \prime} (U\inv) = x \cdot 0 = 0,
	\]
	which is a contradiction.
\end{proof}

\begin{example} The series $Y$ of Example \ref{AddInfinite} is the reciprocal of the series $X$ of Example \ref{PracAddZero}; its $\Add$-minimal polynomial is $1$, so it is $\Add$-infinite.
\end{example}

\begin{proposition}\label{Proposition: Classification of inverses}
	Let $U \in \R[[\sigma]]$ be a unit. Then
	\begin{enumerate}[label=(\alph*)]
		\item The series $U$ is $\Sum$-transcendental if and only if $U\inv$ is $\Sum$-transcendental. \label{Inverse of Sum-transcendental series}
		\item The series $U$ is (absolutely) $\Sum$-algebraic if and only if $U\inv$ is either $\Sum$-infinite or (absolutely) $\Sum$-algebraic but not practically $\Sum$-zero.\label{Inverse of Sum-algebraic series}
		\item The series $U$ is $\Sum$-infinite if and only if $U\inv$ is practically $\Sum$-zero. \label{Inverse of Sum-infinite series}
	\end{enumerate}
\end{proposition}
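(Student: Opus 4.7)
The plan is to deduce (a) from the involution $Q \mapsto \reflected Q$ on polynomial annihilators, to quote Lemma \ref{Lemma: Invertible Sum-infinite series} for (c), and to assemble (b) by combining (a), (c) with the trichotomy $\R[[\sigma]] = \Trans \Sum \sqcup \Alg \Sum \sqcup \Inf \Sum$ from \eqref{Partition of R[[sigma]]}.

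For part (a), I would observe that if $Q(T) = \sum_{k=0}^n Q_k T^k \in \Domain[T]$ annihilates $U\inv$, then, since $U$ is a unit, $U^n Q(U\inv) = \reflected Q(U)$ vanishes as well, so $\reflected Q \in \Domain[T]$ annihilates $U$; conversely $\reflected{(\reflected Q)} = Q$ (up to trailing-zero bookkeeping, which $U$ being a unit makes harmless), so reflection gives a bijection between polynomial annihilators of $U\inv$ and of $U$. Moreover $\Sum(Q)(t) = \sum_k \Sum(Q_k)\, t^k$ and $\Sum(\reflected Q)(t) = \sum_k \Sum(Q_k)\, t^{n-k}$ are obtained from each other by reversing the coefficient sequence, so $\Sum(Q)(t) = 0$ in $\Codomain[t]$ if and only if $\Sum(\reflected Q)(t) = 0$. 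Thus $U\inv$ is $\Sum$-transcendental (no polynomial annihilator has nonzero $\Sum$-image) exactly when $U$ is. Part (c) is exactly Lemma \ref{Lemma: Invertible Sum-infinite series}, which I would cite directly.

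For part (b) I would run the trichotomy in each direction. Forward: if $U \in \Alg \Sum$ then $U \notin \Trans \Sum$ and $U \notin \Inf \Sum$; by (a), $U\inv \notin \Trans \Sum$, and by (c), $U\inv$ is not practically $\Sum$-zero, so the trichotomy forces $U\inv$ to be either $\Sum$-infinite or $\Sum$-algebraic and not practically $\Sum$-zero. Backward: if $U\inv \in \Inf \Sum$, then part (c) with the roles of $U$ and $U\inv$ interchanged gives $U$ practically $\Sum$-zero, hence $\Sum$-algebraic; while if $U\inv \in \Alg \Sum$ is not practically $\Sum$-zero, then (a) excludes $U \in \Trans \Sum$ and (c) excludes $U \in \Inf \Sum$, so the trichotomy yields $U \in \Alg \Sum$.

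The main obstacle I anticipate is the absolute variant of (b) indicated by the parenthetical ``(absolutely)''. One would like to promote the above argument by appealing to $\AbsAlg \Sum \subseteq \AbsAlg {\Sum\prm}$ (Proposition \ref{Proposition: Every Sum-*** Series is a Sum'-*** Series}) and tracking the descent of scalar polynomials via Corollary \ref{Corollary: Scalar polynomials lose factors under extensions}; but since $\AbsAlg \Sum$ is not a priori closed under inversion, upgrading $U\inv$ from $\Sum$-algebraic to absolutely $\Sum$-algebraic requires ruling out that any extension $\Sum\prm$ of $\Sum$ pushes $U\inv$ into $\Inf {\Sum\prm}$. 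By (c) applied to $\Sum\prm$, this is equivalent to preventing $U$ from becoming practically $\Sum\prm$-zero, which must be extracted from the hypotheses that $U \in \AbsAlg \Sum$ and that $U\inv \notin \Inf \Sum$; this delicate interaction between absolute $\Sum$-algebraicity of $U$ and the inverse behavior of $\scalpoly U$ is where I expect the real work to lie.
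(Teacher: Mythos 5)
Your treatment of (a), (c), and the non-absolute reading of (b) is correct, and (a) goes by a genuinely different route than the paper. The paper proves (a) by extension-theoretic means: for each $x \in \Zeroes U \Sum$ it produces a rationally closed extension summing $U$ to $x$ and hence $U\inv$ to $x\inv$, so $\Zeroes{U\inv}\Sum \supseteq \set{x\inv : x \in \Zeroes U \Sum, \ x \neq 0}$; when $U$ is $\Sum$-transcendental this set is $\Codomain^\times$, which is infinite because $\Codomain$ is algebraically closed, and Corollary \ref{Corollary: Alternate Characterization for Sum-*** Series} then forces $U\inv$ to be $\Sum$-transcendental. Your reflection argument instead exhibits $Q \mapsto \reflected Q$ as carrying ${\rm A}(U\inv;\Sum)$ into ${\rm A}(U;\Sum)$ and back (the coefficient list of $\Sum(\reflected Q)(t)$ is that of $\Sum(Q)(t)$ reversed, so one vanishes identically iff the other does); this is more elementary, stays at the level of annihilating polynomials, and does not use algebraic closedness of $\Codomain$. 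The paper's (c) is likewise a bare citation of Lemma \ref{Lemma: Invertible Sum-infinite series}, and its (b) is exactly your trichotomy bookkeeping.

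The ``delicate interaction'' you flag in the absolute variant of (b) is a real obstruction, and you should not feel bad about failing to close it: the paper's own proof silently omits it, and the parenthetical version is in fact false as stated. Take $\Sum = \Add$ over $\bbC$ and let $U$ be the unit $\tfrac{1}{2}\bigl(1 + \sqrt{1 - 4\sigma + 4\sigma^3}\bigr)$, with minimal polynomial $P(T) = T^2 - T + (\sigma - \sigma^3)$, so that $\scalpoly U (t) = t(t-1)$ while $\reflected P$ gives $\scalpoly {U\inv} (t) = t - 1$. Since $\scalpoly{U\inv}$ over any extension divides $t-1$, every extension either sums $U\inv$ (to $1$), whence $U$ is summable, or makes $U\inv$ infinite, whence $U$ is practically zero; either way $U$ stays algebraic, so $U$ is absolutely $\Add$-algebraic (this also follows from Corollary \ref{Corollary: A unit is sum-algebraic if and only if s_U inv(0) neq 0}, whose proof does not rely on part (b)). But $U\inv$ is not absolutely $\Add$-algebraic: the extension $\Add_{U,0}$ summing $U$ to the root $0$ of $\scalpoly U$ makes $U\inv$ infinite. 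Thus $U$ is absolutely $\Sum$-algebraic while $U\inv$ is neither $\Sum$-infinite nor absolutely $\Sum$-algebraic, refuting the forward implication; reversing the roles of $U$ and $U\inv$ refutes the converse. The correct relationship is the one you were groping toward: for a unit, $U \in \AbsAlg\Sum$ iff $\scalpoly{U\inv}(0) \neq 0$ and $U\inv \in \AbsAlg\Sum$ iff $\scalpoly U(0) \neq 0$, and these conditions are independent. The non-parenthetical version of (b), which is all that the rest of the paper uses, is exactly what your argument establishes.
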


\begin{proof}
	Let $U \in \R[[\sigma]]$ be a unit. We apply Corollary \ref{Corollary: Alternate Characterization for Sum-*** Series}. For each $x \in \Zeroes U \Sum$, there exists a multiplicative summation $\Sum_x$ such that $\Sum_x(U) = x$. Replacing $\Sum_x$ with its rational closure if necessary, we may take $\Sum_x$ to be rationally closed. Then if $x \neq 0$, we see $\Sum_x (U\inv) = x\inv$. \textit{A fortiori}, we conclude 
	\[
	\set{x\inv \ : \ x \in \Zeroes U \Sum, \ x \neq 0} \subseteq \Zeroes {U\inv} \Sum. 
	\]	
Suppose first that $U$ is $\Sum$-transcendental. The field $\Codomain$ is algebraically closed, and so is infinite. Thus $\Zeroes {U\inv} \Sum \supseteq \Codomain^\times$ is infinite, so by Corollary \ref{Corollary: Alternate Characterization for Sum-*** Series} the series $U\inv$ is $\Sum$-transcendental. By symmetry, if $U\inv$ is $\Sum$-transcendental then $U$ is $\Sum$-transcendental.

Now \ref{Inverse of Sum-infinite series} is just Lemma \ref{Lemma: Invertible Sum-infinite series}, and \ref{Inverse of Sum-algebraic series} follows from \ref{Inverse of Sum-transcendental series}, \ref{Inverse of Sum-infinite series}, and the partition (\ref{Partition of R[[sigma]]}).
\end{proof}

At last, we are ready to distinguish which $\Sum$-algebraic series are absolutely $\Sum$-algebraic.

\begin{theorem}\label{Theorem: Characterizations of absolutely Sum-algebraic series}
	Let $X$ be a $\Sum$-algebraic series. The following are equivalent:
	\begin{enumerate}[label=(\alph*)]
		\item The series $X$ is absolutely $\Sum$-algebraic.\label{Condition: X is absolutely Sum-algebraic}
		\item For some unit $U$ tail-equivalent to $X$, we have $\scalpoly {U\inv} (0) \neq 0$. \label{Condition: Exists unit with s(0) != 0}	
		\item For every unit $U$ tail-equivalent to $X$, we have $\scalpoly {U\inv} (0) \neq 0$. \label{Condition: For all units, s(0) != 0}
	\end{enumerate}
\end{theorem}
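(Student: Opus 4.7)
The plan is to pass to a convenient unit $U$ tail-equivalent to $X$ via Corollary~\ref{Corollary: Tail-equivalence respects Sum-***}, and then to dispatch the nontrivial implications by combining Proposition~\ref{Proposition: Classification of inverses} with Lemma~\ref{Lemma: Invertible Sum-infinite series}. An initial observation is that every $X \in \R[[\sigma]]$ is tail-equivalent to a unit: the truncation $U \coloneqq 1 + \sum_{n \geq 1} X_n \sigma^n$ has $\lambda U = \lambda X$ and constant term $1$. Because tail-equivalence preserves absolute $\Sum$-algebraicity, the problem reduces to studying an arbitrary such unit $U$, and the implication \ref{Condition: For all units, s(0) != 0}$\Rightarrow$\ref{Condition: Exists unit with s(0) != 0} becomes trivial.

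To establish \ref{Condition: X is absolutely Sum-algebraic}$\Rightarrow$\ref{Condition: For all units, s(0) != 0}, I would fix any unit $U$ tail-equivalent to $X$ and note that $U$ inherits absolute $\Sum$-algebraicity. Proposition~\ref{Proposition: Classification of inverses} then forces $U\inv$ to be either $\Sum$-infinite---in which case $\scalpoly {U\inv}(t) = 1$ and we are done---or $\Sum$-algebraic. In the algebraic subcase, my strategy is to obtain a contradiction from the assumption $\scalpoly {U\inv}(0) = 0$: Theorem~\ref{Theorem: Summation-structure of Sum-*** Series} would then produce an extension $\Sum\prm$ with $\Sum\prm(U\inv) = 0$, and absolute $\Sum$-algebraicity of $U$ would supply a further extension $\Sum^{\prime\prime}$ with $\Sum^{\prime\prime}(U) = x$ for some $x \in \Codomain$, yielding the incompatibility $1 = \Sum^{\prime\prime}(U \cdot U\inv) = x \cdot 0 = 0$.

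For \ref{Condition: Exists unit with s(0) != 0}$\Rightarrow$\ref{Condition: X is absolutely Sum-algebraic}, I would take $U$ tail-equivalent to $X$ with $\scalpoly {U\inv}(0) \neq 0$ and show that $U$ is $\Sum\prm$-algebraic for every extension $\Sum\prm$ of $\Sum$. The key preliminary is Corollary~\ref{Corollary: Scalar polynomials lose factors under extensions}: the scalar polynomial $\scalpoly {U\inv}\prm(t)$ over $\Sum\prm$ divides $\scalpoly {U\inv}(t)$, which is nonzero and not divisible by $t$, so the same two properties are inherited by $\scalpoly {U\inv}\prm(t)$. Thus $U\inv$ is either $\Sum\prm$-infinite (in which case Lemma~\ref{Lemma: Invertible Sum-infinite series} makes $U$ practically $\Sum\prm$-zero and a fortiori $\Sum\prm$-algebraic) or $\Sum\prm$-algebraic with $0$ not a root of its scalar polynomial. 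In the latter case Proposition~\ref{Proposition: Classification of inverses} applied to $U\inv$ leaves two possibilities for $U$; the $\Sum\prm$-infinite option can be excluded because it would, via Lemma~\ref{Lemma: Invertible Sum-infinite series}, make $\scalpoly {U\inv}\prm(t) = t^m$, contradicting $\scalpoly {U\inv}\prm(0) \neq 0$.

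The main obstacle is the bookkeeping across Proposition~\ref{Proposition: Classification of inverses} and Lemma~\ref{Lemma: Invertible Sum-infinite series}, in particular ruling out the practically $\Sum\prm$-zero traps that arise when $U\inv$ might acquire $0$ as a root in some extension. The conceptual heart of both directions is that $\scalpoly {U\inv}(0) = 0$ is exactly the obstruction to extending $\Sum$ compatibly with the multiplicative identity $U \cdot U\inv = 1$.
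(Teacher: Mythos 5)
Your proof is correct and follows essentially the same route as the paper's: the same key ingredients (Lemma~\ref{Lemma: Invertible Sum-infinite series}, Proposition~\ref{Proposition: Classification of inverses}, Corollary~\ref{Corollary: Scalar polynomials lose factors under extensions}, Corollary~\ref{Corollary: Tail-equivalence respects Sum-***}, and Theorem~\ref{Theorem: Summation-structure of Sum-*** Series}) appear in the same roles. The only differences are cosmetic: you run the cycle \ref{Condition: X is absolutely Sum-algebraic}$\Rightarrow$\ref{Condition: For all units, s(0) != 0}$\Rightarrow$\ref{Condition: Exists unit with s(0) != 0}$\Rightarrow$\ref{Condition: X is absolutely Sum-algebraic} instead of the paper's ordering, choose a different (equally valid) unit tail-equivalent to $X$, and inline the $1 = x \cdot 0 = 0$ contradiction that the paper obtains by citing Lemma~\ref{Lemma: Invertible Sum-infinite series}.
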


\begin{proof}
	$\ref{Condition: X is absolutely Sum-algebraic} \implies \ref{Condition: Exists unit with s(0) != 0}$: Let $X$ be an absolutely $\Sum$-algebraic series, and let $U \coloneqq 1 - \sigma + \sigma^2 X$. As $U \in 1 + \sigma \R[[\sigma]] \subseteq \R[[\sigma]]^\times$, we see $U$ is a unit, which is manifestly tail-equivalent to $X$. Thus by Corollary \ref{Corollary: Tail-equivalence respects Sum-***}, $U$ is absolutely $\Sum$-algebraic. But now suppose by way of contradiction that $\scalpoly {U\inv} (0) = 0$. Then by Theorem \ref{Theorem: Summation-structure of Sum-*** Series}, there is a multiplicative summation $\Sum\prm$ extending $\Sum$ such that $\Sum\prm(U\inv) = 0$. In particular, $U\inv$ is practically $\Sum\prm$-zero, and so $U$ is $\Sum\prm$-infinite by Lemma \ref{Lemma: Invertible Sum-infinite series}. But as $U$ is absolutely $\Sum$-algebraic, Proposition \ref{Proposition: Every Sum-*** Series is a Sum'-*** Series} tells us $U$ is absolutely $\Sum\prm$-algebraic, and we obtain a contradiction.
	
	$\ref{Condition: Exists unit with s(0) != 0} \implies \ref{Condition: For all units, s(0) != 0}$: We proceed by contrapositive. Suppose that $V$ is a unit tail-equivalent to $X$ with $\scalpoly {V\inv} (0) = 0$. By Theorem \ref{Theorem: Summation-structure of Sum-*** Series}, there exists a summation $\Sum\prm$ extending $\Sum$ such that $\Sum\prm (V\inv) = 0$. \textit{A fortiori}, we see $V\inv$ is practically $\Sum\prm$-zero, and so by Lemma \ref{Lemma: Invertible Sum-infinite series}, we see $V$ is $\Sum\prm$-infinite. Then by Corollary \ref{Corollary: Tail-equivalence respects Sum-***}, if $U$ is a unit is tail-equivalent to $V$, then $U$ is $\Sum\prm$-infinite; equivalently, if $U$ is a unit tail-equivalent to $X$, then $U$ is $\Sum\prm$-infinite. Applying Lemma \ref{Lemma: Invertible Sum-infinite series} again, we see that such a $U\inv$ is practically $\Sum\prm$-zero. Then by Corollary \ref{Corollary: Scalar polynomials lose factors under extensions}, we conclude $\scalpoly {U\inv} (0) = 0$, which proves our claim.	
	
	$\ref{Condition: For all units, s(0) != 0} \implies \ref{Condition: X is absolutely Sum-algebraic}$: Suppose by contrapositive that $X$ is not absolutely $\Sum$-algebraic, and let $U \coloneqq 1 - \sigma + \sigma^2 X$. If $X$ is $\Sum$-transcendental, then by Corollary \ref{Corollary: Tail-equivalence respects Sum-***}, so is $U$, and so by Proposition \ref{Proposition: Classification of inverses}, $U\inv$ is $\Sum$-transcendental. Then $\scalpoly {U\inv} (t) = 0$, and in particular $\scalpoly {U\inv} (0) = 0$, as desired. Suppose now that $X$ is neither absolutely $\Sum$-algebraic nor $\Sum$-transcendental. Then $X$ is $\Sum\prm$-infinite for some extension $\Sum\prm$ of $\Sum$; by Corollary \ref{Corollary: Tail-equivalence respects Sum-***}, $U$ is also $\Sum\prm$-infinite. Now by Corollary \ref{Lemma: Invertible Sum-infinite series}, $U\inv$ is effectively $\Sum\prm$-zero, so the scalar polynomial for $U\inv$ with respect to $\Sum\prm$ is a positive power of $t$. Then by Corollary \ref{Corollary: Scalar polynomials lose factors under extensions}, that same power of $t$ divides $\scalpoly {U\inv} (t)$, and so $\scalpoly {U\inv} (0) = 0$. Our assertion holds by contrapositive.	
\end{proof}

\begin{corollary}\label{Corollary: A unit is sum-algebraic if and only if s_U inv(0) neq 0}
	A unit $U$ of $\R[[\sigma]]$ is absolutely $\Sum$-algebraic if and only if $\scalpoly {U\inv} (0) \neq 0$.
\end{corollary}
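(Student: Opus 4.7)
The plan is to reduce this corollary to Theorem \ref{Theorem: Characterizations of absolutely Sum-algebraic series}, which already establishes the desired equivalence for $\Sum$-algebraic series up to the freedom of choosing a tail-equivalent unit. Since any unit $U$ is trivially tail-equivalent to itself, if $U$ happens to be $\Sum$-algebraic then taking $U$ simultaneously as the series and as the witness unit in condition \ref{Condition: Exists unit with s(0) != 0} of that theorem yields the equivalence $U \in \AbsAlg{\Sum} \iff \scalpoly{U\inv}(0) \neq 0$ immediately.

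To close the loop I would dispense with the remaining cases of the partition $\R[[\sigma]] = \Trans{\Sum} \sqcup \Alg{\Sum} \sqcup \Inf{\Sum}$ by showing that in each of them both sides of the claimed biconditional fail. If $U \in \Trans{\Sum}$, then $U$ is not $\Sum$-algebraic, hence not absolutely $\Sum$-algebraic; and Proposition \ref{Proposition: Classification of inverses}\ref{Inverse of Sum-transcendental series} gives that $U\inv$ is also $\Sum$-transcendental, so $\scalpoly{U\inv}(t) = 0$ and in particular $\scalpoly{U\inv}(0) = 0$. If instead $U \in \Inf{\Sum}$, then again $U$ is not absolutely $\Sum$-algebraic, while Proposition \ref{Proposition: Classification of inverses}\ref{Inverse of Sum-infinite series} (i.e.\ Lemma \ref{Lemma: Invertible Sum-infinite series}) identifies $U\inv$ as practically $\Sum$-zero, so $\scalpoly{U\inv}(t) = t^m$ for some $m > 0$ and thus $\scalpoly{U\inv}(0) = 0$.

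The substantive content is already contained in Theorem \ref{Theorem: Characterizations of absolutely Sum-algebraic series} and Proposition \ref{Proposition: Classification of inverses}, so no step here is a serious obstacle; the corollary is simply the specialization of the theorem in which the tail-equivalence clause collapses (since a unit is tail-equivalent to itself), supplemented by a short case check that absorbs the $\Sum$-transcendental and $\Sum$-infinite alternatives into the negative side of the biconditional.
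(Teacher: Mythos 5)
Your proof is correct and matches the paper's intent: the corollary is stated without proof precisely because it is the specialization of Theorem \ref{Theorem: Characterizations of absolutely Sum-algebraic series} in which the unit $U$ serves as its own tail-equivalent witness. Your additional case check for $U \in \Trans{\Sum}$ and $U \in \Inf{\Sum}$ (needed because the theorem's hypothesis restricts to $\Sum$-algebraic series, while the corollary quantifies over all units) is a correct and worthwhile completion of the argument.
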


We have another sufficient condition for a series $X$ to be absolutely $\Sum$-algebraic.

\begin{corollary}\label{Corollary: sum-degree suffices for being absolutely sum-algebraic}
	If $X$ is a $\Sum$-algebraic series with $\Sum$-degree equal to its scalar degree, then $X$ is absolutely $\Sum$-algebraic.
\end{corollary}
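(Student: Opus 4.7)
The plan is to reduce to Corollary \ref{Corollary: A unit is sum-algebraic if and only if s_U inv(0) neq 0} by exhibiting a single unit $U$ tail-equivalent to $X$ with $\scalpoly{U\inv}(0) \neq 0$. The obvious candidate (as in the proof of Theorem \ref{Theorem: Characterizations of absolutely Sum-algebraic series}) is $U \coloneqq 1 - \sigma + \sigma^2 X$.

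First I would verify the bookkeeping. Since $U_0 = 1$, the series $U$ is a unit of $\R[[\sigma]]$, and since $\lambda^2(U) = X$, it is tail-equivalent to $X$. Corollary \ref{Corollary: Tail-equivalence respects Sum-***} transfers the hypothesis from $X$ to $U$: the series $U$ is $\Sum$-algebraic with $\Sum$-degree equal to its scalar degree. Call this common value $n$ and pick a $\Sum$-minimal polynomial $P(T) = \sum_{k=0}^n P_k T^k$ for $U$ of degree $n$ as a polynomial in $T$. Because $\deg \Sum(P)(t)$ equals the scalar degree, the leading coefficient survives: $\Sum(P_n) \neq 0$. Unwinding \eqref{Definition of reflected polynomial} then yields the key identity $\Sum(\reflected P)(0) = \Sum(P_n) \neq 0$, so $\Sum(\reflected P)(t)$ has nonzero constant term.

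The rest is a short chain. By Corollary \ref{Corollary: scalar polynomial of inverse divides reflected polynomial}, $\scalpoly{U\inv}(t)$ divides $\Sum(\reflected P)(t)$, and any nonzero polynomial dividing a polynomial with nonzero constant term must itself have nonzero constant term. The main obstacle is therefore ensuring the divisor $\scalpoly{U\inv}(t)$ is not the zero polynomial. Here I would invoke Proposition \ref{Proposition: Classification of inverses}: because $U$ is $\Sum$-algebraic, $U\inv$ cannot be $\Sum$-transcendental, so $\scalpoly{U\inv}(t)$ is either nonconstant (the $\Sum$-algebraic case) or equal to the constant $1$ (the $\Sum$-infinite case)---in either event nonzero. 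Thus $\scalpoly{U\inv}(0) \neq 0$, and combining Corollary \ref{Corollary: A unit is sum-algebraic if and only if s_U inv(0) neq 0} with Corollary \ref{Corollary: Tail-equivalence respects Sum-***} (to pass from absolute $\Sum$-algebraicity of $U$ back to that of $X$) completes the argument.
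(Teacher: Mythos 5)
Your proposal is correct and follows essentially the same route as the paper: pass to the unit $U = 1 - \sigma + \sigma^2 X$, reflect a degree-$n$ $\Sum$-minimal polynomial, and conclude $\scalpoly{U\inv}(0) \neq 0$ via Corollary \ref{Corollary: scalar polynomial of inverse divides reflected polynomial} and Theorem \ref{Theorem: Characterizations of absolutely Sum-algebraic series}. If anything, you are more careful than the paper on two points it leaves implicit --- that the hypothesis forces $\Sum(P_n)\neq 0$ (so the reflected polynomial's \emph{sum} has nonzero constant term, not merely the reflected polynomial itself) and that $\scalpoly{U\inv}(t)$ is not the zero polynomial.
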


\begin{proof}
	Let $U = 1 - \sigma + \sigma^2 X$; by Corollary \ref{Corollary: Tail-equivalence respects Sum-***}, $U$ is absolutely $\Sum$-algebraic if and only if $X$ is absolutely $\Sum$-algebraic. Now suppose that that the $\Sum$-degree of $U$ equals its scalar degree, and let $P(t)$ be a $\Sum$-minimal polynomial for $U$ with degree equal to its $\Sum$-degree. Then $\reflected P (U\inv) = 0$, and so $\scalpoly {U\inv} (t)$ divides $\Sum (\reflected P) (t)$. But observe that $\reflected P (0) \neq 0$, hence $\scalpoly {U\inv} (0) \neq 0$. Then by Theorem \ref{Theorem: Characterizations of absolutely Sum-algebraic series}, our claim follows.
\end{proof}

\begin{example}\label{PracAddZero} Let $X \in \bbC[\sigma]$ be the the Taylor series for $\sqrt{1-\sigma}$, so
\[
X \coloneqq 1 - \frac{\sigma}2 - \frac{\sigma^2}8 - \frac{\sigma^3}{16} - \frac{5\sigma^4}{128} - \frac{7\sigma^5}{256} - \cdots.
\]
We see $X$ has minimal polynomial $T^2-(1-\sigma)$ and scalar polynomial $t^2$. It is thus $\Add$-algebraic; indeed, by Corollary \ref{Corollary: sum-degree suffices for being absolutely sum-algebraic}, $Y$ is absolutely $\Sum$-algebraic. As $\Zeroes X \Sum = \set 0$, we conclude that $X$ is practically $\Sum$-zero.
\end{example} 

Although Corollary \ref{Corollary: sum-degree suffices for being absolutely sum-algebraic} provides a sufficient criterion for verifying that a series is absolutely $\Sum$-algebraic, we have been unable to determine if this condition is also necessary. Thus, we pose the following question.

\begin{quest}
	Suppose $\Sum$ is a multiplicative summation to an algebraically closed field $\Codomain$. If $X$ is an absolutely $\Sum$-algebraic series, must it have $\Sum$-degree equal to its scalar degree?
\end{quest}

\section{Univalent Extensions}\label{Section: Univalent Extensions}

Recall that $\Codomain$ is an algebraically closed field. Although $\Q\Sum$ is multiplicatively canonical, it is generally not the multiplicative fulfillment of $\Sum$. Consider for instance the series $X$ of Example \ref{PracAddZero}. This series was shown to be practically $\Add$-zero; it follows, by the remarks after Definition \ref{Definition: Practically Zero}, that $X$ is in the multiplicative fulfillment of $\Add$. On the other hand, $X$ is not a rational function in $\sigma$, and so is not an element of $\Q\R[\sigma]$. This example is suggestive: if the scalar polynomial of an absolutely $\Sum$-algebraic series $X$ is not linear but still only has one root, this forces $X$ to be summed to a unique value, and places it in the multiplicative fulfillment of $\Sum$. In this section, we follow this intuition to construct the multiplicative fulfillment of $\Sum$.

\begin{defn}\label{Definition: Sum-univalence}
	If $X$ is $\Sum$-algebraic and $\scalpoly X (t) = (t-\rho)^m$ for some $\rho \in \Codomain$ and $m > 0$, we say $X$ is \defi{$\Sum$-univalent} with root $\rho \eqqcolon \scalroot X$. If in addition $X$ is absolutely $\Sum$-algebraic, we say $X$ is \defi{absolutely $\Sum$-univalent with root $\scalroot X$}.
\end{defn}
A series $X$ is practically $\Sum$-zero exactly if it is absolutely $\Sum$-univalent with root $\scalroot X = 0$; thus, Definition \ref{Definition: Sum-univalence} provides a natural extension of Definition \ref{Definition: Practically Zero}. 

\begin{example}
	Let $Z$ be the Taylor series of ${{3 - \sigma + \sqrt{1 - 6 \sigma + 5 \sigma^2}} \over 2}$; thus we have
	\[
		Z = 2 - 2\sigma - \sigma^2 - 3\sigma^3 - 10\sigma^4 - 36\sigma^5 - \ldots.
	\]
	We claim $Z$ is $\Add$-univalent. Indeed, $Z$ has $\Sum$-minimal polynomial
\[
	P(T) = T^2 - (3-\sigma) T + (2-\sigma^2) \in \Domain[t];
\]
therefore its scalar polynomial is 
\[
	\scalpoly Z (t) = t^2 - 2t + 1 = (t-1)^2 
\]
and so $Z$ is $\Add$-univalent with root $\scalroot Z = 1$. Corollary \ref{Corollary: sum-degree suffices for being absolutely sum-algebraic} confirms that $Z$ is absolutely $\Sum$-univalent.
\end{example}

The series $Y$ described in Example \ref{Not absolutely Sum-algebraic} furnishes an example of a series which is $\Sum$-univalent but not absolutely $\Sum$-univalent.

Note that if $X$ is absolutely $\Sum$-univalent, then by Corollary \ref{Corollary: Scalar polynomials lose factors under extensions} and Proposition \ref{Proposition: Every Sum-*** Series is a Sum'-*** Series}, it is absolutely $\Sum\prm$-univalent for every $\Sum\prm \in \MSums \R \Codomain$ extending $\Sum$ and the root of $X$ is invariant under extensions of the summation.
\begin{defn}\label{Definition: Univalent Extension}
	For any multiplicative summation $(\Domain, \Sum) \in \MSums \R \Codomain$, the \defi{univalent extension} $(\U\Domain, \U\Sum)$ of $(\Domain, \Sum)$ is defined as follows. For $X \in \R[[\sigma]]$, we say $X \in \U\Domain$ if $X$ is absolutely $\Sum$-univalent. We define
	\begin{align*}
		\U\Sum &: \U\Domain \to \Codomain, \\ 
		\U\Sum &: X \mapsto \scalroot X \ \text{if} \ X \ \text{is as above.}
	\end{align*}
	We extend this definition to weakly multiplicative summations $(\Domain, \Sum) \in \WSums \R \Codomain$ by setting $(\U\Domain, \U\Sum) \coloneqq (\U\M\Domain, \U\M\Sum)$.
\end{defn}

\begin{lemma}\label{Lemma: The image of U(Sum) is a purely inseparable extension of Sum(Domain)}
	For any multiplicative summation $(\Domain, \Sum)$, $(\U\Domain, \U\Sum)$ is a well-defined multiplicative summation. Moreover, the image of $\U\Domain$ under $\U\Sum$ is a purely inseparable extension of the field of fractions of $\Sum(\Domain)$. In particular, if $\Codomain$ is of characteristic 0, then the image of $\U\Domain$ under $\U\Sum$ is the field of fractions of $\Sum(\Domain)$.
\end{lemma}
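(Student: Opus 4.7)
My proof plan proceeds in four steps. First, I would verify that $\U\Sum$ is well-defined: by unique factorization in $\Codomain[t]$, the root $\scalroot X$ is uniquely determined by $\scalpoly X(t) = (t - \scalroot X)^m$. The containment $\Domain \subseteq \U\Domain$ is immediate, since for $X \in \Domain$ the polynomial $T - X$ is $\Sum$-minimal with scalar polynomial $t - \Sum(X)$, and this description persists in every extension, so $X$ is absolutely $\Sum$-univalent with $\U\Sum(X) = \Sum(X)$.

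Second, I would verify the algebra axioms, which reduces essentially to showing $\U\Domain$ is closed under addition and multiplication with the expected roots. Given $X, Y \in \U\Domain$ with roots $x, y$, Theorem~\ref{Theorem: Ring-structure of Absolutely Sum-Algebraic Series} already yields that $X + Y$ and $XY$ are absolutely $\Sum$-algebraic, so what remains is to pin down their scalar polynomials. The crucial observation: any extension $\Sum'$ that sums $X + Y$ can be further extended via Theorem~\ref{Theorem: Summation-structure of Sum-*** Series}\ref{Summing Sum-algebraic series} to $\Sum''$ summing $X$, and then to $\Sum'''$ summing $Y$; absolute $\Sum$-univalence (with roots invariant under extensions) forces $\Sum'''(X) = x$ and $\Sum'''(Y) = y$, whence $\Sum'(X + Y) = x + y$. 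Thus $\Zeroes{X + Y}{\Sum} = \{x + y\}$, which combined with absolute algebraicity forces $\scalpoly{X + Y}(t) = (t - (x+y))^m$, and symmetrically $\scalpoly{XY}(t) = (t - xy)^n$. Applying Corollary~\ref{Corollary: Scalar polynomials lose factors under extensions} over an arbitrary extension upgrades this to \emph{absolute} $\Sum$-univalence. Closure under multiplication by $1 - \sigma$ and factorization through $\U\Domain/(1 - \sigma)\U\Domain$ are then immediate from $\U\Sum(1 - \sigma) = 0$ and multiplicativity.

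Third, I would show $\Q\Domain \subseteq \U\Domain$ with $\U\Sum|_{\Q\Domain} = \Q\Sum$, which combined with Proposition~\ref{Proposition: The image of Q(Sum) is the field of fractions of Sum(Domain)} forces $\U\Sum(\U\Domain) \supseteq K$, where $K$ denotes the field of fractions of $\Sum(\Domain)$. Indeed, if $X \in \Q\Domain$ with $A = BX$, $\Sum(B) = b \neq 0$, and $\Sum(A) = bx$, then $BT - A \in \Domain[T]$ has scalar polynomial $t - x$; since $b$ remains nonzero in any extension of $\Sum$, the same polynomial exhibits $X$ as $\Sum'$-univalent with root $x$ for every multiplicative extension $\Sum'$, so $X$ is absolutely $\Sum$-univalent with root $x = \Q\Sum(X)$.

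Fourth, the most delicate step, I would show $\U\Sum(\U\Domain)$ is purely inseparable over $K$. For any $\rho = \U\Sum(X)$ with $\scalpoly X(t) = (t - \rho)^m$, the scalar polynomial equals $\Sum(P)(t)/a$ for some $\Sum$-minimal $P$, with leading coefficient $a \in \Sum(\Domain)^\times \subseteq K^\times$; hence $(t - \rho)^m \in K[t]$. The minimal polynomial of $\rho$ over $K$ therefore divides $(t - \rho)^m$ and itself takes the form $(t - \rho)^{m'}$, exhibiting $\rho$ as purely inseparable over $K$. To justify calling the image a field, for nonzero $\rho$ in characteristic $p > 0$ I would invoke the standard form $\rho^{p^e} = c \in K^\times$ of a purely inseparable minimal polynomial, whence $\rho^{-1} = c^{-1} \rho^{p^e - 1}$ is a product of $c^{-1} \in K \subseteq \U\Sum(\U\Domain)$ with a power of $\rho$, hence in the image. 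In characteristic zero, pure inseparability collapses $m'$ to $1$, so $\rho \in K$ and the image coincides with $K$. The main obstacle throughout is step two: absolute univalence, unlike mere absolute algebraicity, demands the iterated-extension argument to pin down the scalar root of a sum or product, rather than a direct appeal to a ring-structure theorem.
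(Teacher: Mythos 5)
Your proposal is correct, and its first two steps (well-definedness, and closure of $\U\Domain$ under sums and products via the zero-set containment $\Zeroes{X+Y}{\Sum} \subseteq \set{x+y}$ together with nonemptiness from absolute algebraicity) match the paper's argument essentially verbatim. Where you genuinely diverge is in how the image is shown to be a purely inseparable field extension of the fraction field $K$ of $\Sum(\Domain)$. The paper closes the image under inverses directly: it replaces $X$ by the unit $1-\sigma+\sigma^2 X$, applies the reflected polynomial $\reflected{P}$ and Theorem \ref{Theorem: Characterizations of absolutely Sum-algebraic series} to show $X\inv$ is absolutely $\Sum$-univalent with root $x\inv$; it then proves inseparability by explicitly reading $\scalroot X$ (in characteristic $0$) or $\scalroot X^{p^\mu}$ (in characteristic $p$, after writing $m = p^\mu m'$) off the coefficients of $\Sum(P)(t) = a(t-\scalroot X)^m$. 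You instead observe that $(t-\rho)^m = a\inv\Sum(P)(t) \in K[t]$, so the minimal polynomial of $\rho$ over $K$ divides it and has $\rho$ as its only root, which is pure inseparability by definition; you then recover closure under inverses from the standard form $\rho^{p^e} = c \in K^\times$, and you secure $K$ itself inside the image via $\Q\Domain \subseteq \U\Domain$ (a containment the paper defers to Theorem \ref{Theorem: Univalent Extension}). Your route is cleaner and more conceptual for the inseparability claim, at the cost of importing the classification of purely inseparable minimal polynomials; the paper's coefficient computation is more self-contained and, via the reflected-polynomial argument, yields the slightly stronger explicit fact that $X\inv \in \U\Domain$ with $\U\Sum(X\inv) = x\inv$ for every unit $X$, which is reused elsewhere. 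One small point worth making explicit in your step three: to see that $BT - A$ witnesses univalence over every extension $\Sum\prm$, you should note that $X$ cannot be $\Sum\prm$-infinite because $\Q\Sum\prm$ sums it, so the scalar polynomial, which divides $b(t-x)$, is forced to be $t - x$ rather than a nonzero constant.
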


\begin{proof}
	The map $\U\Sum$ is well-defined as a function because $\scalpoly X (t)$ is well-defined for every $\Sum$-algebraic $X \in \R[[\sigma]]$. If $X$ and $Y$ are absolutely $\Sum$-univalent, then by Theorem \ref{Theorem: Ring-structure of Absolutely Sum-Algebraic Series} we see $X+Y$ and $XY$ are absolutely $\Sum$-algebraic. Then as $\Codomain$ is algebraically closed, we see
	\[
		\emptyset \subsetneq \Zeroes {X + Y} \Sum \subseteq \set{x + y \ : \ x \in \Zeroes X \Sum, y \in \Zeroes Y \Sum} = \set{\scalroot X + \scalroot Y},
	\] 
	and $X + Y$ is absolutely $\Sum$-univalent with root $\scalroot X + \scalroot Y$. A similar argument holds for $XY$. Thus $\U\Sum$ is a multiplicative summation, and the image of $\U\Sum$ is at least an $\R$-algebra. 
	
	The proof that the image of $\U\Sum$ is a field proceeds along the same lines as in the proof of Proposition \ref{Proposition: The image of Q(Sum) is the field of fractions of Sum(Domain)}. Let $x \neq 0$ be in the image of $\U\Sum$, and choose $X \in \U\Domain$ with $\U\Sum (X) = x$. Replacing $X$ with $1 - \sigma + \sigma^2 X$ if necessary, we may assume that $X$ is a unit. By assumption $\scalpoly X (0) \neq 0$, then by Theorem \ref{Theorem: Characterizations of absolutely Sum-algebraic series}, we see $X\inv$ is absolutely $\Sum$-algebraic. But now let $P(T) = \sum\limits_{k = 0}^\ell P_k T^k$ be a $\Sum$-minimal polynomial for $X$, and suppose $\Sum (P)(t) = a \cdot (t - x)^m$ with $a \in \Codomain^\times$. Note $\Sum (\reflected{P}) (t) = a (-x)^m \cdot t^{\ell - m}(t - x\inv)^m$, then by Corollary \ref{Corollary: scalar polynomial of inverse divides reflected polynomial}, we see $\scalpoly {X\inv} (t)$ divides $t^{\ell - m} (t - x\inv)^m$. Applying Theorem \ref{Theorem: Characterizations of absolutely Sum-algebraic series} again and recalling that $X$ is absolutely $\Sum$-algebraic, we see that $\scalpoly {X\inv}(0) \neq 0$, and so $X\inv$ is absolutely $\Sum$-univalent with root $x\inv$. It remains to show that this field is a totally inseparable extension of the field of fractions of $\Sum(\Domain)$.
	
	Let $X$ be $\Sum$-univalent with root $\scalroot X$. Then there exists a $\Sum$-minimal polynomial $P(T) \in \Domain[t]$ with $P(X) = 0$ and $\Sum (P)(t) = a \cdot (t - \scalroot X)^m$ for some $a \in \Codomain^\times$ and $m > 0$ an integer. If $\Codomain$ is of characteristic 0 then 
	\[
		\scalroot X = - \frac{\Sum (P_{m-1})}{m \Sum (P_m)}
	\]
	is an element of the field of fractions of $\Sum(\Domain)$ as desired. Otherwise, let $p$ prime be the characteristic of $\Codomain$, and write $m = p^\mu \cdot m\prm$ with $(p,m\prm) = 1$. Then 
	\[
		\scalroot X^{p^\mu} = - \frac{\Sum (P_{p^\mu \cdot (m\prm-1)})}{m\prm \Sum(P_m)}
	\]
	and so $\scalroot X$ is a $p^\mu$th root of an element of the field of fractions of $\Sum(\Domain)$, hence purely inseparable over the field of fractions of $\Sum(\Domain)$. The claim follows.
\end{proof}

\begin{theorem}\label{Theorem: Univalent Extension}
		The map of multiplicative summations 
		\[
			\U : \MSums \R \Codomain \to \MSums \R \Codomain
		\] 
		is a well-defined extension map. Moreover, $\U$ is idempotent, multiplicatively canonical, preserves multiplicative compatibility, and subsumes $\T$ and $\Q$.
\end{theorem}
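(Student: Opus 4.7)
The plan is to verify the four structural assertions of the theorem — extension map, idempotence, multiplicative canonicity, and preservation of multiplicative compatibility — and then the two ``subsumes'' claims. Lemma~\ref{Lemma: The image of U(Sum) is a purely inseparable extension of Sum(Domain)} already says $\U\Sum$ is a well-defined multiplicative summation, so only these properties remain.

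To check $\U$ is an extension map, I would take $X \in \Domain$ with $\Sum(X) = x$: the polynomial $P(T) = T - X \in \Domain[T]$ satisfies $P(X) = 0$ and $\Sum(P)(t) = t - x$, so both the $\Sum$-degree and scalar degree of $X$ equal $1$, and Corollary~\ref{Corollary: sum-degree suffices for being absolutely sum-algebraic} yields absolute $\Sum$-univalence of $X$ with root $x$. Thus $X \in \U\Domain$ and $\U\Sum(X) = x$. The same template gives subsumption of $\Q$ (and hence, by Theorem~\ref{Theorem: Rational Extension}, of $\T$, $\M\T$, and $\T\M$): given a witness $A = BX$ with $A, B \in \Domain$, $\Sum(B) = b \in \Reg\Codomain$ and $\Sum(A) = bx$, the polynomial $BT - A \in \Domain[T]$ annihilates $X$ and is sent by $\Sum$ to $b(t - x)$, and Corollary~\ref{Corollary: sum-degree suffices for being absolutely sum-algebraic} again gives $X \in \U\Domain$ with $\U\Sum(X) = x = \Q\Sum(X)$.

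The heart of the proof is idempotence, and this is where I expect the main obstacle to lie. One inclusion $\U\Sum \subseteq \U\U\Sum$ is immediate: by Proposition~\ref{Proposition: Every Sum-*** Series is a Sum'-*** Series} and the remark following Definition~\ref{Definition: Sum-univalence}, absolute $\Sum$-univalence is preserved by extension and the root is invariant. The opposite inclusion $\U\U\Sum \subseteq \U\Sum$ is more subtle. Suppose $X$ is absolutely $\U\Sum$-univalent with root $\rho$; Proposition~\ref{Proposition: Every Sum-*** Series is a Sum'-*** Series} rules out $X$ being $\Sum$-transcendental or $\Sum$-infinite, so $X$ is $\Sum$-algebraic. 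The key auxiliary claim I would prove is that, for every multiplicative extension $\Sum\prm$ of $\Sum$, the univalent extension $\U\Sum\prm$ contains $\U\Sum$: every $Y \in \U\Domain$ is absolutely $\Sum\prm$-algebraic by Proposition~\ref{Proposition: Every Sum-*** Series is a Sum'-*** Series}, and its scalar polynomial over $\Sum\prm$ divides $(t - \scalroot Y)^k$ by Corollary~\ref{Corollary: Scalar polynomials lose factors under extensions}, hence is a positive power of $t - \scalroot Y$, so $\U\Sum\prm(Y) = \scalroot Y = \U\Sum(Y)$. Since $\U\Sum\prm \supseteq \U\Sum$ and $X$ is absolutely $\U\Sum$-univalent, $X$ is $\U\Sum\prm$-algebraic; Corollary~\ref{Corollary: Scalar polynomials lose factors under extensions} then forces $X$ to be $\Sum\prm$-algebraic. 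Thus $X$ is absolutely $\Sum$-algebraic. Finally, if $\rho' \in \Codomain$ is any root of the scalar polynomial of $X$ over $\Sum$, Theorem~\ref{Theorem: Summation-structure of Sum-*** Series}(b) supplies an extension $\Sum\prm \supseteq \Sum$ with $\Sum\prm(X) = \rho'$; the auxiliary claim gives $\U\Sum\prm \supseteq \U\Sum$, and absolute $\U\Sum$-univalence then forces $\rho' = \Sum\prm(X) = \U\Sum\prm(X) = \rho$. Hence the scalar polynomial of $X$ over $\Sum$ is $(t - \rho)^{m'}$ for some $m' \geq 1$, and $X$ is absolutely $\Sum$-univalent.

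Multiplicative canonicity and preservation of multiplicative compatibility follow quickly. For canonicity, if $\Sum^*$ is any multiplicative extension of $\Sum$ and $X \in \U\Domain \cap \Domain^*$, then Theorem~\ref{Theorem: Summation-structure of Sum-*** Series}(b), applied to the absolutely $\Sum$-univalent series $X$, forces $\Sum^*(X) = \scalroot X = \U\Sum(X)$. For compatibility, if $\Sum_1$ and $\Sum_2$ admit a common multiplicative extension $\Sum^*$, the auxiliary claim from the preceding paragraph applied to $\Sum^* \supseteq \Sum_1$ and $\Sum^* \supseteq \Sum_2$ separately shows $\U\Sum^*$ extends both $\U\Sum_1$ and $\U\Sum_2$, witnessing their multiplicative compatibility.
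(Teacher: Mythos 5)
Your proposal is correct and follows essentially the same route as the paper: the polynomial $T-X$ for the extension property, preservation of absolute univalence under extension (via Proposition~\ref{Proposition: Every Sum-*** Series is a Sum'-*** Series} and Corollary~\ref{Corollary: Scalar polynomials lose factors under extensions}) for canonicity, a canonicity-based pinning of all possible values of $X$ to the single root $\rho$ for idempotence, and $\Q\Domain\subseteq\U\Domain$ for subsumption; your idempotence argument is just a more explicit version of the paper's. One small slip: Proposition~\ref{Proposition: Every Sum-*** Series is a Sum'-*** Series} only gives $\Trans{\Sum}\supseteq\Trans{\U\Sum}$ and $\Inf{\Sum}\subseteq\Inf{\U\Sum}$, so it rules out $X$ being $\Sum$-infinite but not $X$ being $\Sum$-transcendental; your appeal to Theorem~\ref{Theorem: Summation-structure of Sum-*** Series}\ref{Summing Sum-algebraic series} therefore presupposes what is not yet established. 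The repair is immediate within your own argument: if $X$ were $\Sum$-transcendental, part \ref{Summing Sum-transcendental series} of that theorem would produce extensions summing $X$ to every element of $\Codomain$, and your auxiliary claim forces each such value to equal $\rho$, contradicting $|\Codomain|>1$. Also note that your ``canonicity'' paragraph only shows agreement of $\U\Sum$ with any extension $\Sum^*$ on the overlap of domains (consistency), whereas the paper's notion requires a common extension; your auxiliary claim already supplies that common extension, namely $\U\Sum^*$, so you should cite it there rather than the overlap argument.
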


\begin{proof}
	We showed in Lemma \ref{Lemma: The image of U(Sum) is a purely inseparable extension of Sum(Domain)} that $\U\Sum$ is a well-defined multiplicative summation. Suppose now that $X \in \Domain$, and define $P(T) = T - X \in \Domain[T]$. Clearly $P(X) = 0$ and 
	\[
		\Sum (P)(t) = \scalpoly X (t) = t - \Sum X,
	\] 
	then $\U\Sum (X) = \Sum (X)$. Hence $\U\Sum$ is well-defined, multiplicative, and an extension of $\Sum$.
				
	Suppose $\Sum\prm$ extends $\Sum$. By Proposition \ref{Proposition: Every Sum-*** Series is a Sum'-*** Series} we have $\AbsAlg {\Sum\prm} \supseteq \AbsAlg \Sum$, and so by Corollary \ref{Corollary: Scalar polynomials lose factors under extensions}, if $X$ is absolutely $\Sum$-univalent then $X$ is absolutely $\Sum\prm$-univalent. Then $\U(\Sum\prm)$ extends both $\Sum\prm$ and $\U\Sum$, and so $\U\Sum$ is multiplicatively $\Sum$-canonical.
	
	Now suppose that $X \in \U\U\Domain$ with root $\rho$. As $\U\U\Sum$ is a multiplicatively $\Sum$-canonical extension of $\Sum$, Theorem \ref{Theorem: Summation-structure of Sum-*** Series} tells us that the scalar polynomial $\scalpoly X (t)$ for $X$ has only one root in $\Codomain$, and \textit{a fortiori} that $X$ is $\Sum$-algebraic. Then as $\Codomain$ is algebraically closed, we see $X$ is $\Sum$-univalent. Again, as $\U\U\Sum$ is multiplicatively canonical, $X$ must be absolutely $\Sum$-algebraic, as otherwise there would be an extension $\Sum\prm$ of $\Sum$ over which $X$ would be $\Sum\prm$-infinite. Then $X$ is absolutely $\Sum$-univalent, and $\U$ is idempotent as desired.
		
	Recall that $\U$ preserves multiplicative compatibility if and only if it preserves extensions. But if $\Sum\prm$ extends $\Sum$, and $X$ is $\Sum$-univalent, then by Corollary \ref{Corollary: Scalar polynomials lose factors under extensions}, $X$ is also $\Sum\prm$-univalent. Then $U(\Sum\prm)$ extends $\U\Sum$ as desired.
	
	Finally, if $X \in \Q\Domain$ then $X \in \U\Domain$, hence by the idempotence of $\U$ we have 
	\[
		\U\Domain = \U\U\Domain = \Q\U\Domain = \U\Q\Domain,
	\]
	and so $\U$ subsumes $\Q$. But then as $\Q$ subsumes $\T$, we see $\U$ subsumes $\T$ as well.
\end{proof}

\begin{cor}\label{Corollary: a product of infinite series is infinite}
	If $X$ and $Y$ are $\Sum$-infinite, then $XY$ is $\Sum$-infinite.
\end{cor}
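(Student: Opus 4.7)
The plan is to upgrade the conclusion of Lemma \ref{Lemma: product of infinite series are infinite or zero}, which already shows that $\scalpoly{XY}(t)$ is a power of $t$, to the stronger statement that $\scalpoly{XY}(t) = 1$, by applying the reflection of \eqref{Definition of reflected polynomial} to the polynomial produced in that lemma's proof. First I reduce (via Corollary \ref{Corollary: Tail-equivalence respects Sum-***}) to the case where $X$ and $Y$ are units, with $\Sum$-minimal polynomials $P$ of degree $m$ and $Q$ of degree $n$ normalized so that $P_0 = Q_0 = 1$; this parallels the opening steps of the proof of Lemma \ref{Lemma: product of infinite series are infinite or zero}.

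The main construction is then the following. The proof of Lemma \ref{Lemma: product of infinite series are infinite or zero} exhibits the polynomial $Q_{m,n}(T; P_m, \ldots, P_1, Q_n, \ldots, Q_1) \in \Domain[T]$, which annihilates $X\inv Y\inv$ and, by Lemma \ref{Lemma: generic minimal polynomials evaluate to t^mn at zero}, has $\Sum$-image $t^{mn}$. I apply the reflection to this polynomial, setting
\[
R(T) \coloneqq \reflected{Q_{m,n}}(T; P_m, \ldots, P_1, Q_n, \ldots, Q_1) \in \Domain[T].
\]
Since $X\inv Y\inv$ is a unit and is a root of $Q_{m,n}$, the reflected polynomial $R$ vanishes at its inverse $XY$: indeed, $R(XY) = (XY)^{mn} Q_{m,n}((XY)\inv; \ldots) = 0$. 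Moreover, since reflection is a coefficient-wise operation commuting with $\Sum$ once the substitutions have been made, the $\Sum$-image of $R$ is the reflected polynomial of $t^{mn}$, namely the constant $1$. Hence $R$ annihilates $XY$ with nonzero constant $\Sum$-image, so $\scalpoly{XY}(t) = 1$ and $XY$ is $\Sum$-infinite. The relaxation from $P_0 = Q_0 = 1$ to arbitrary units follows by the same rescaling argument as in the second half of the proof of Lemma \ref{Lemma: product of infinite series are infinite or zero}, yielding an annihilator with $\Sum$-image the nonzero constant $\left(\Sum(P_0)\Sum(Q_0)\right)^{mn}$.

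The principal obstacle is the reduction from general $\Sum$-infinite $X, Y$ to the unit case. Since tail-equivalence does not preserve products, transferring the result from tail-equivalent units $U \sim X$, $V \sim Y$ back to $XY$ requires care. When $\R$ is an integral domain, $X$ and $Y$ admit the decompositions $X = \sigma^{v_X} X^*$, $Y = \sigma^{v_Y} Y^*$ with $X^*, Y^*$ units, and one applies the unit case to $X^*, Y^*$ and then observes that multiplication by $\sigma^j$ preserves $\Sum$-infiniteness (by substituting into a $\Sum$-minimal polynomial as in Example \ref{Grandi}). The general case requires the same implicit handling as in the proof of Lemma \ref{Lemma: product of infinite series are infinite or zero}.
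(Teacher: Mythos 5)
Your proof is correct, and it takes a genuinely different route from the paper's. The paper deduces the corollary from the machinery of Section 4: by Proposition \ref{Proposition: Classification of inverses}, the inverses $X\inv$ and $Y\inv$ of the (WLOG unit) series are practically $\Sum$-zero; by Theorem \ref{Theorem: Univalent Extension} the univalent extension $\U\Sum$ is a multiplicative summation, so it sums $X\inv Y\inv$ to $0$, making $X\inv Y\inv$ practically $\Sum$-zero; and a second application of Proposition \ref{Proposition: Classification of inverses} inverts back to conclude $XY$ is $\Sum$-infinite. You instead strengthen Lemma \ref{Lemma: product of infinite series are infinite or zero} directly: reflecting the monic annihilator $Q_{m,n}(T;\ldots)$ of $X\inv Y\inv$ reverses its coefficient sequence, so its $\Sum$-image $t^{mn}$ reverses to the nonzero constant $1$, and the reflected polynomial annihilates $XY$; your weighted version for general $P_0, Q_0$, with $\Sum$-image $(\Sum(P_0)\Sum(Q_0))^{mn}$, checks out as well. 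The payoff of your argument is that it is self-contained at the level of Section 3 --- it does not need Theorem \ref{Theorem: Univalent Extension} or Proposition \ref{Proposition: Classification of inverses}, so the corollary could be stated immediately after Lemma \ref{Lemma: product of infinite series are infinite or zero} --- at the cost of redoing the resultant computation rather than reusing the structural results. Both arguments share the same reduction to units, which you rightly flag as the delicate point; note only that your aside about integral domains is slightly off ($X = \sigma^{v_X}X^*$ makes $X^*$ have nonzero constant term, but $X^*$ is a unit of $\R[[\sigma]]$ only when that constant term is a unit of $\R$), though this does not affect your main argument, which rests on the same device $1-\sigma+\sigma^2X$ and the same implicit handling as the paper's own proofs.
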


\begin{proof}
	Without loss of generality, assume that $X$ and $Y$ are units. By Proposition \ref{Proposition: Classification of inverses}, we see that $X\inv$ and $Y\inv$ are practically $\Sum$-zero, and so by Theorem \ref{Theorem: Univalent Extension}, $X\inv Y\inv$ is also practically $\Sum$-zero. Another application of Proposition \ref{Proposition: Classification of inverses} shows that $(X\inv Y\inv)\inv = X Y$ is $\Sum$-infinite, as desired.
\end{proof}

\begin{theorem}\label{Theorem: Univalent Extension is multiplicative fulfillment}
	For any summation $(\Domain, \Sum) \in \MSums \R \Codomain$, $(\U\Domain, \U\Sum)$ is the multiplicative fulfillment of $\Sum$.
\end{theorem}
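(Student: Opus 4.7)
The plan is to combine Theorem \ref{Theorem: Univalent Extension}, which already shows that $\U\Sum$ is a multiplicatively canonical extension of $\Sum$, with a maximality argument: every multiplicatively canonical extension $(\Domain^*, \Sum^*)$ of $(\Domain, \Sum)$ must satisfy $\Domain^* \subseteq \U\Domain$ with $\Sum^*$ agreeing with $\U\Sum$ on $\Domain^*$. Since the multiplicative fulfillment is the unique maximal multiplicatively canonical extension, this identifies $\U\Sum$ as the multiplicative fulfillment. Throughout, I will use ``canonical'' in the sense of the previous paper: compatible with every multiplicative extension of $\Sum$, in that any two such extensions admit a common multiplicative extension.

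To prove maximality, fix a multiplicatively canonical extension $\Sum^*$, let $X \in \Domain^*$, and set $x \coloneqq \Sum^*(X)$. I want to show $X$ is absolutely $\Sum$-univalent with root $x$. Invoking the trichotomy from Definition \ref{Definition: Sum-*** series}, I would rule out each alternative to $\Sum$-univalence in turn. First, $X$ cannot be $\Sum$-infinite, since $\Sum^*$ extends $\Sum$ and sums $X$, contradicting Theorem \ref{Theorem: Summation-structure of Sum-*** Series}\ref{Summing Sum-infinite series}. Second, if $X$ were $\Sum$-transcendental, then by Theorem \ref{Theorem: Summation-structure of Sum-*** Series}\ref{Summing Sum-transcendental series} one could pick any $y \in \Codomain$ with $y \neq x$ and a multiplicative extension $\Sum_y$ of $\Sum$ with $\Sum_y(X) = y$; canonicality would yield a common multiplicative extension of $\Sum^*$ and $\Sum_y$ assigning $X$ both values $x$ and $y$, a contradiction. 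Third, by the same mechanism using Theorem \ref{Theorem: Summation-structure of Sum-*** Series}\ref{Summing Sum-algebraic series}, no root of $\scalpoly X(t)$ can differ from $x$, so $\scalpoly X(t) = (t - x)^m$ for some $m \geq 1$ and $X$ is $\Sum$-univalent with root $x$.

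It remains to upgrade $\Sum$-univalence to \emph{absolute} $\Sum$-univalence. If $X$ were not absolutely $\Sum$-algebraic, some multiplicative extension $\Sum'$ of $\Sum$ would make $X$ a $\Sum'$-infinite series, so by Theorem \ref{Theorem: Summation-structure of Sum-*** Series}\ref{Summing Sum-infinite series} no multiplicative extension of $\Sum'$ would have $X$ in its domain. But canonicality of $\Sum^*$ would furnish a common multiplicative extension $\Sum^{\prime \prime}$ of $\Sum^*$ and $\Sum'$, which necessarily extends $\Sum'$ and contains $X$ in its domain, a contradiction. Hence $X$ is absolutely $\Sum$-univalent with root $x$, so $X \in \U\Domain$ and $\U\Sum(X) = x = \Sum^*(X)$, establishing maximality.

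The main conceptual obstacle is disentangling multiplicative consistency from the stronger condition of multiplicative compatibility used to define canonicality --- the very distinction that Example \ref{PickOne} was designed to highlight. Once canonicality is correctly unpacked as the existence of a common multiplicative extension, each step reduces cleanly to an application of Theorem \ref{Theorem: Summation-structure of Sum-*** Series}; no further technical machinery beyond what has already been developed is required.
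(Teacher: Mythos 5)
Your proposal is correct and takes essentially the same approach as the paper: both establish maximality by running through the trichotomy of Definition \ref{Definition: Sum-*** series} (plus the failure of absolute $\Sum$-algebraicity) and using Theorem \ref{Theorem: Summation-structure of Sum-*** Series} to show that any series outside $\U\Domain$ either admits two incompatible sums or becomes $\Sum\prm$-infinite under some extension, hence cannot lie in any multiplicatively canonical extension. The only difference is presentational --- you phrase the argument as $\Domain^* \subseteq \U\Domain$ for an arbitrary canonical extension $\Sum^*$, while the paper argues the contrapositive directly about the fulfillment's domain.
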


\begin{proof}
	By Theorem \ref{Theorem: Univalent Extension}, the univalent extension of a summation is multiplicatively canonical, and so every absolutely $\Sum$-univalent series is contained in the multiplicative fulfillment of $\Sum$. But now $X$ be any series in $\R[[\sigma]]$ that is not absolutely $\Sum$-univalent. We claim $X$ is not in the domain of the multiplicative fulfillment of $\Sum$. Indeed, if $X$ is not absolutely $\Sum$-univalent, then one of the following must occur:
	\begin{caseof}
		\case{$X$ is $\Sum$-infinite over $\Domain$.}{
			In this case, by Theorem \ref{Theorem: Summation-structure of Sum-*** Series}, $X$ is not in the domain of any extension of $\Sum$, and in particular is not in the domain of the multiplicative fulfillment of $\Sum$.
		}
		\case{$X$ is neither $\Sum$-infinite nor $\Sum$-univalent.}{
		In this case, by Theorem \ref{Theorem: Summation-structure of Sum-*** Series}, we may choose extensions $\Sum\prm, \Sum^{\prime \prime} \in \MSums \R \Codomain$ of $\Sum$ with both $\Sum\prm (X)$ and $\Sum^{\prime \prime} (X)$ defined, but with $\Sum\prm (X) \neq \Sum^{\prime \prime} (X)$. Then $X$ is not in the domain of the multiplicative fulfillment of $\Sum$.
		}
		\case{$X$ is $\Sum$-univalent but not absolutely $\Sum$-algebraic.}{
			In this case, we may choose an extension $\Sum\prm \in \MSums \R \Codomain$ of $\Sum$ such that $X$ is $\Sum\prm$-infinite, and as in Case 1 conclude that $X$ is not in the domain of the multiplicative fulfillment of $\Sum$.
		}
	\end{caseof}
	We conclude $\U\Domain$ is the domain of the multiplicative fulfillment of $\Sum$, and so $(\U\Domain, \U\Sum)$ is the multiplicative fulfillment of $(\Domain, \Sum)$.
\end{proof}

We can extend $\U$ to a functor on weakly multiplicative summations by defining $\U \Sum \coloneqq \U \M \Sum$. Naturally, Theorems \ref{Theorem: Univalent Extension} and \ref{Theorem: Univalent Extension is multiplicative fulfillment} still apply in this more general context.

\section{Future work}\label{Section: Future work}

In \cite{DM2} we followed Hardy's philosophy \cite{Hardy} of treating the shift operator $\sigma$ as fundamental to the study of series and summations, and we found the fulfillment of summations taking values in integral domains. In this paper, we shifted our attention to multiplicative summations, and we found the multiplicative fulfillment of summations taking values in an algebraically closed fields. Notably, neither of these are fully general results, so it is natural to try to find fulfillments or multiplicative fulfillments for summations taking values in general commutative rings.

However, there are other lines of inquiry to pursue. Almost all of the dozens of classical summation operators are shift-invariant; so this is an obvious property to axiomatize. If the finitely-supported series are considered as polynomials in $\sigma$, then there is a unique distributive product; we can consider any shift-closed vector space of series as an $R[\sigma]$-module. The extension to the Cauchy product of pairs of series is not automatic, but is natural; with this product, series become formal power series.   
 
Most well-known summations are also invariant under uniform dilutions of the form $\delta_m :\sum_n a_n\sigma^n \mapsto \sum_n a_n \sigma^{mn}$. From an analytic perspective, this is a consequence of the Mellin transform \cite{Muller}. Algebraically, if we want dilutions to have internal multiplicative representations, we are led to the \emph{Dirichlet product}, defined by
\[
\parent{\sum_n A_n \sigma^n} \star \parent{\sum_n B_n \sigma^n} = \sum_n \left (\sum_{k \ell=n} A_k B_\ell \right) \sigma^n
\]
Summations of this type have been studied under such headings as $\zeta$-function regularization \cite{Hawking} and Ramanujan summation \cite{Candelpergher}. We speculate that the formal approach we follow in this paper and in \cite{Dawson, DM2} may be fruitfully applied in this context as well. Indeed, Nori independently developed an analogue to telescopic summation for series indexed by discrete abelian groups \cite{Nori}: more generally, the study of series indexed by groups or even monoids would include both shift-invariation summations and dilution-invariant summations as special cases.
 
In the same way that Euler and Grandi studied specific applications of formal shift-invariance methods without a general theory, Ramanujan \cite{Ram}, \cite{Berndt}[Chapter 6] applied formal methods involving dilution to the series $1+2+3+4+\cdots$, obtaining the value $-1/12$. (Ramanujan did have an analytic theory applying the Euler-Maclaurin summation formula to similar summations \cite{Berndt}, ; see also Hardy \cite{Hardy} [Section 13.10 and 13.17]). Considered as a formal power series, however, that series is $\Add$-infinite, and thus cannot be in the domain of any shift-invariant summation. This contradiction illustrates a tension between dilution-invariance and shift-invariance. It would be nonetheless be interesting to consider fulfillments within families of summations that are both shift-invariant and dilution-invariant.

\bibliographystyle{amsplain}

\end{document}